\newcommand{\pl}[1]{\foreignlanguage{polish}{#1}}
\theoremstyle{plain}
\newtheorem{theorem}{Theorem}
\newtheorem{conjecture}{Conjecture}
\newtheorem{proposition}{Proposition}[section]
\newtheorem{lemma}[proposition]{Lemma}
\theoremstyle{definition}
\numberwithin{equation}{section}
\newcounter{thm}
\theoremstyle{plain}
\newcommand{\RR}{\mathbb{R}}
\newcommand{\ZZ}{\mathbb{Z}}
\newcommand{\TT}{\mathbb{T}}
\newcommand{\CC}{\mathbb{C}}
\newcommand{\NN}{\mathbb{N}}
\newcommand{\calC}{\mathcal{C}}
\newcommand{\calF}{\mathcal{F}}
\newcommand{\calM}{\mathcal{M}}
\newcommand{\mm}{\mathfrak{m}}
\newcommand{\ind}[1]{{\mathds{1}_{{#1}}}}
\newcommand{\dist}{\operatorname{dist}}
\newcommand{\norm}[1]{{\left| #1 \right|}}
\newcommand{\sprod}[2] {{#1 \cdot #2}}
\newcommand{\abs}[1]{{\left\lvert {#1} \right\rvert}}
\newcommand{\sgn}{\mathrm{sgn}}
\newcommand{\dif}{\mathrm{d}}
\title[Some remarks on dimension-free estimates]
{Some remarks on dimension-free estimates for the
discrete Hardy--Littlewood maximal functions}
\author{Dariusz Kosz}
\address{Dariusz Kosz\\
	Wroc{\l}aw University of Science and Technology\\
	Wybrze{\.z}e Wyspia{\'n}skiego 27\\
	50-370 Wroc{\l}aw, Poland}
\email{dariusz.kosz@pwr.edu.pl}
\author{Mariusz Mirek}
\address{Mariusz Mirek \\
  Department of Mathematics\\
  Rutgers University\\
Piscataway, NJ 08854\\ USA \&
	Instytut Matematyczny\\
	Uniwersytet \pl{Wroc{\lll}awski}\\
	Plac Grun\-waldzki 2/4\\
	50-384 \pl{Wroc{\lll}aw}\\
	Poland}
\email{mariusz.mirek@rutgers.edu}
\author{Pawe{\l} Plewa}
\address{Pawe{\l} Plewa\\
	Wroc{\l}aw University of Science and Technology\\
	Wybrze{\.z}e Wyspia{\'n}skiego 27\\
	50-370 Wroc{\l}aw, Poland}
\email{pawel.plewa@pwr.edu.pl}
\author{B{\l}a{\.z}ej Wr{\'o}bel}
\address{ B{\l}a{\.z}ej Wr{\'o}bel\\
	Instytut Matematyczny\\
	Uniwersytet \pl{Wroc{\lll}awski}\\
	Plac Grun\-waldzki 2/4\\
	50-384 \pl{Wroc{\lll}aw}\\
	Poland}
\email{blazej.wrobel@math.uni.wroc.pl}
\thanks{ Dariusz Kosz and Pawe\l{} Plewa were supported by funds of
Faculty of Pure and Applied Mathematics, Wroc\l{}aw University of
Science and Technology, \#049U/0052/19.  Mariusz Mirek was partially
supported by Department of Mathematics at Rutgers University. Mariusz
Mirek and B{\l}a{\.z}ej Wr{\'o}bel were supported by the National
Science Centre, Poland, grant Opus 2018/31/B/ST1/00204 }
\begin{document}
 
\selectlanguage{english}

\begin{abstract}
Dependencies of the optimal constants in strong and weak type bounds
will be studied between maximal functions corresponding to the
Hardy--Littlewood averaging operators over convex symmetric bodies
acting on $\mathbb R^d$ and $\mathbb Z^d$.  Firstly, we show, in the
full range of $p\in[1,\infty]$, that these optimal constants in
$L^p(\mathbb R^d)$ are always not larger than their discrete analogues
in $\ell^p(\mathbb Z^d)$; and we also show that the equality holds for
the cubes in the case of $p=1$. This in particular implies that the
best constant in the weak type $(1,1)$ inequality for the discrete
Hardy--Littlewood maximal function associated with centered cubes in
$\mathbb Z^d$ grows to infinity as $d\to\infty$, and if $d=1$ it is
equal to the largest root of the quadratic equation $12C^2-22C+5=0$.
Secondly, we prove dimension-free estimates for the
$\ell^p(\mathbb Z^d)$ norms, $p\in(1,\infty]$, of the discrete
Hardy--Littlewood maximal operators with the restricted range of
scales $t\geq C_q d$ corresponding to $q$-balls,
$q\in[2,\infty)$. Finally, we extend the latter result on
$\ell^2(\mathbb Z^d)$ for the maximal operators restricted to dyadic
scales $2^n\ge C_q d^{1/q}$.
\end{abstract}

\maketitle

\section{Introduction}
\subsection{A brief overview of the paper} Throughout this paper $d\in\NN$ always denotes the dimension of the Euclidean space $\RR^d$, and  $G$ denotes a convex symmetric body in $\RR^d$,
which is a bounded closed and symmetric convex subset of $\RR^d$ with
nonempty interior. We shall consider convex bodies in two contexts,
continuous and discrete. Therefore, in
order to avoid unnecessary technicalities, we always assume that $G\subset \RR^d$ is closed (whereas in the
literature it is usually assumed to be open). One of the most
classical examples of convex symmetric bodies are the $q$-balls 
$B^q\subset \RR^d$, $q\in[1,\infty]$, defined for $q \in [1,\infty)$ by
\begin{equation*}
B^q:=B^q(d):=\Big\{x\in\RR^d: \vert x\vert_q:=\Big( \sum_{i=1}^d |x_i|^q\Big)^{1/q}\le 1 \Big\},
\end{equation*}
and for $q=\infty$ by
\begin{equation*}
B^\infty:=B^\infty(d):=\Big\{x\in\RR^d: \vert x\vert_\infty:=\max_{1\leq i\leq d}|x_i|\le 1 \Big\}.
\end{equation*}
If $p=2$ then $B^2$ is the closed unit Euclidean ball in $\RR^d$ centered at the
origin, and if $p=\infty$ then $B^{\infty}$ is the cube in $\RR^d$ centered
at the origin and of side length $2$.

We associate with a convex symmetric body $G\subset \RR^d$ the families of
continuous $(M_t^G)_{t>0}$ and discrete  $(\calM_t^G)_{t>0}$ averaging operators given respectively  by
\begin{equation}
\label{eq:18}
M^G_t F(x):=\frac{1}{|G_t|} \int_{G_t} F(x-y)\, {\rm d} y, \qquad F\in L^1_{\rm loc}(\RR^d),
\end{equation}
and
\begin{equation}
\label{eq:19}
\calM^G_t f(x):=\frac{1}{|G_t\cap\ZZ^d|} \sum_{y\in G_t\cap\ZZ^d } f(x-y), \qquad f\in  \ell^\infty(\ZZ^d),
\end{equation}
where $G_t=\{y\in\RR^d: t^{-1}y\in G\}$ is the dilate of $G\subset \RR^d$. Moreover, we define the corresponding maximal functions by
\begin{equation*}
M_\ast^G F(x) :=\sup_{t>0} \big|M_t^G F(x)\big|,
\qquad \text{ and } \qquad
\calM_\ast^G f(x) :=\sup_{t>0} \big|\calM_t^G f(x)\big|.
\end{equation*}

It is well known that both maximal functions are of weak type $(1,1)$
and of strong type $(p,p)$ for any $p\in(1,\infty]$. Moreover, neither
of these maximal functions is of strong type $(1,1).$ Our primary interest is
focused on determining whether the constants arising in the weak and
strong type inequalities can be chosen independently of the dimension
$d$. Moreover, we shall compare the best constants in such
inequalities for $M_\ast^G$ and $\calM_\ast^G$, respectively.

For
$p\in (1, \infty]$ we denote by $C(G,p)$ the smallest constant $0< C< \infty$ for
which the following strong type inequality holds
\begin{equation*}
\| M_\ast^G F \|_{L^p(\RR^d)}  \leq C \| F \|_{L^p(\RR^d)}, \qquad F \in L^p(\RR^d).
\end{equation*}
Similarly, $C(G,1)$ will stand for the smallest constant $0< C< \infty$ satisfying
\begin{equation*}
\sup_{\lambda>0}\lambda \,  |\{ x \in \RR^d : M_\ast^G F(x) > \lambda \}| \leq C \| F \|_{L^1(\RR^d)}, \qquad F \in L^1(\RR^d).
\end{equation*}
Analogously to $C(G,p)$, we define $\calC(G,p)$ for any $p \in [1, \infty]$, referring to $\calM_\ast^G$ in place  of $M_\ast^G$. 

Our main result of this paper can be formulated as follows. 

\begin{theorem}\label{thm:T}
Fix $d \in \NN$ and let $G \subset \RR^d$ be a convex symmetric
body. Then for each $p \in [1, \infty]$ we have
\begin{align}
\label{eq:16}
C(G,p) \leq \calC(G,p).
\end{align}
 Moreover, for the $d$-dimensional cube
 $B^\infty(d)\subset \RR^d$ one has
 \begin{align}
 \label{eq:52}
 C(B^\infty(d),1) = \calC(B^\infty(d),1).
 \end{align}
\end{theorem}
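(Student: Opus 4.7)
\emph{Transference for \eqref{eq:16}.} The plan is to transfer the discrete bound from $\ZZ^d$ to the continuous one on $\RR^d$ by sampling on finer and finer sublattices $\tfrac1N\ZZ^d$. For a continuous, compactly supported test function $F \in L^p(\RR^d)$ (density will handle the general case), set $f_N(k) := F(k/N)$ for $k \in \ZZ^d$. Two Riemann sum convergences drive the argument: $N^{-d}\|f_N\|_{\ell^p(\ZZ^d)}^p \to \|F\|_{L^p(\RR^d)}^p$, and, for each fixed $s > 0$ and $x \in \RR^d$, $\calM_{Ns}^G f_N(\lfloor Nx\rfloor) \to M_s^G F(x)$ as $N \to \infty$ (since $|G_{Ns}\cap\ZZ^d| / N^d \to |G_s|$ and the inner discrete sums are Riemann sums for the corresponding integrals). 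Taking $\sup_t$ first on the discrete side and then $\liminf_N$, Fatou's lemma applied to the step functions $x \mapsto \calM_\ast^G f_N(\lfloor Nx\rfloor)$, combined with the rescaled discrete strong-type bound (or its weak-type analogue when $p = 1$), yields \eqref{eq:16}.

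\emph{Lattice construction for \eqref{eq:52}.} By \eqref{eq:16}, only the reverse inequality $\calC(B^\infty(d),1) \leq C(B^\infty(d),1)$ needs proof. The decisive feature is that cubes align with $\ZZ^d$: for any $n \in \ZZ^d$ and $m \in \NN_0$, the cube $n + [-m-\tfrac12, m+\tfrac12]^d$ captures exactly the lattice points $k$ with $|k-n|_\infty \leq m$. Given a nonnegative, finitely supported $f \in \ell^1(\ZZ^d)$, a level $\lambda > 0$, and a large $R > 1$, I plan to concentrate the mass of $f$ near lattice points via
\[
F_R(x) := R^d \sum_{k \in \ZZ^d} f(k)\, \ind{k + [-1/(2R),1/(2R))^d}(x),
\]
so that $\|F_R\|_{L^1(\RR^d)} = \|f\|_{\ell^1(\ZZ^d)}$. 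The key claim is that whenever $\calM_\ast^{B^\infty(d)} f(n) > \lambda$, witnessed by some $m_n \in \NN_0$, one has $M_\ast^{B^\infty(d)} F_R(x) > \lambda$ for every $x$ in the shrunken cube $n + (-\tfrac12 + \tfrac{1}{2R}, \tfrac12 - \tfrac{1}{2R})^d$. Indeed, the choice $t := m_n + \tfrac{1}{2R} + |x-n|_\infty$ makes $[x-t, x+t]^d$ contain every mass cube $k + [-1/(2R), 1/(2R))^d$ with $|k-n|_\infty \leq m_n$ and be disjoint from those with $|k-n|_\infty \geq m_n + 1$ (the separation condition reduces to exactly $|x-n|_\infty < \tfrac12 - \tfrac{1}{2R}$), so
\[
M_t^{B^\infty(d)} F_R(x) = \left(\frac{2m_n + 1}{2m_n + 1/R + 2|x-n|_\infty}\right)^d \calM_{m_n}^{B^\infty(d)} f(n) > \lambda.
\]
These shrunken cubes are pairwise disjoint, so summing over admissible $n$ and applying the continuous weak-$(1,1)$ bound to $F_R$ gives
\[
(1 - 1/R)^d \cdot \#\bigl\{n \in \ZZ^d : \calM_\ast^{B^\infty(d)} f(n) > \lambda\bigr\} \leq C(B^\infty(d),1)\, \|f\|_{\ell^1(\ZZ^d)}/\lambda,
\]
and letting $R \to \infty$ closes the argument.

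\emph{Main obstacles.} The technical hurdle in \eqref{eq:16} is organising the limits (Fatou, Riemann convergence, and the distributional comparison in the weak-type case) so that each passes uniformly in $N$ and in the scale parameter; this is standard but must be done with care. For \eqref{eq:52}, the delicate point is verifying the inclusion/exclusion of the mass cubes inside the averaging cube above; this hinges on the rigid half-integer alignment of $B^\infty$ with $\ZZ^d$, and it is exactly this rigidity that renders the equality specific to cubes and not expected for generic convex symmetric bodies $G$.
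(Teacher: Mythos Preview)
Your proposal is correct and follows essentially the same strategy as the paper. For \eqref{eq:16} both arguments sample $F$ on a fine lattice (the paper dilates $F$ by $K$ and samples on $\ZZ^d$, you sample on $\tfrac1N\ZZ^d$ --- these are equivalent); your packaging via Fatou's lemma is slightly more streamlined than the paper's explicit $\varepsilon$-$\delta$ comparison with a small-scale/large-scale split, but the underlying idea is the same. For \eqref{eq:52} both arguments concentrate the mass of $f$ on tiny cubes at lattice points and exploit the exact alignment of $B^\infty$ with $\ZZ^d$; the paper uses the fixed radius $t=N+\tfrac12$ (so that $|B^\infty_{N+1/2}|=(2N+1)^d$ matches the lattice count exactly), while your variable choice $t=m_n+\tfrac1{2R}+|x-n|_\infty$ gives a ratio strictly greater than $1$ on the shrunken cube --- both work, and the structure of the argument is identical.
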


Some comments are in order:
\begin{itemize}
\item[(i)] Clearly, $C(G,\infty)=\calC(G,\infty)=1$, since we have been working with averaging operators.
\item[(ii)] Theorem \ref{thm:T} gives us a quantitative dependence
between $C(G,p)$ and $\calC(G,p)$. Inequality \eqref{eq:16} coincides
with a well known phenomenon in harmonic analysis, which states that
it is harder to establish bounds for discrete operators than the
bounds for their continuous counterparts.
\item[(iii)] Formula \eqref{eq:52} was observed by the first author in
his master thesis. However, it has not been published before. In
particular, it yields that
$\calC(B^\infty(d),1)\ _{\overrightarrow{d\to\infty}} \infty$ in view
of the result of Aldaz \cite{Ald1} asserting that the optimal constant
$C(B^\infty(d),1)$ in the weak type $(1,1)$ inequality grows to
infinity as $d\to\infty$. Quantitative bounds for the constant
$C(B^\infty(d),1)$ were given by Aubrun \cite{Aub1}, who proved
$C(B^\infty(d),1)\gtrsim_{\varepsilon}(\log d)^{1-\varepsilon}$ for
every $\varepsilon>0$, and soon after that by Iakovlev and Str\"omberg
\cite{IakStr1} who considerably improved Aubrun's lower bound by
showing that $C(B^\infty(d),1)\gtrsim d^{1/4}$. The latter result also
ensures in the discrete setup that
$\calC(B^\infty(d),1)\gtrsim d^{1/4}$.
\item[(iv)] If  $d=1$, then \eqref{eq:52} combined with the result of Melas \cite{Mel} implies that
$\calC(B^\infty(1),1)$ is equal to the larger root of the quadratic equation
$12C^2-22C+5=0$.
\item[(v)] The product structure of the cubes $B^\infty(d)$ and the
fact that one works with continuous/discrete norms for $p=1$ are
essential to prove $C(B^\infty(d),1) = \calC(B^\infty(d),1)$.  At this
moment it does not seem that our method can be used to attain the
equality in \eqref{eq:16} for $G=B^\infty(d)$ with
$p\in(1, \infty)$. In general case, as we shall see later in this
paper, inequality \eqref{eq:16} cannot be reversed.
\item[(vi)] The proof of inequality \eqref{eq:16} relies on a suitable
generalization of the ideas described in the master thesis of the first
author. The details are presented in Section \ref{sec:Tr}.

\end{itemize}

Systematic studies of dimension-free estimates in the continuous case were initiated
by Stein \cite{SteinMax}, who showed that $C(B^2,p)$ is bounded
independently of the dimension for all $p\in(1,\infty]$. Shortly
afterwards Bourgain \cite{B1} proved that $C(G,2)$ can be estimated by
an absolute constant independent of the dimension and the convex
symmetric body $G\subset \RR^d$. This result was extended to the range
$p\in(3/2,\infty]$ in \cite{B2} and independently by Carbery in
\cite{Car1}. It is conjectured that one can estimate $C(G,p)$ by a
dimension-free constant for all $p\in(1,\infty]$. This was verified
for the $q$-balls $B^q$, $q\in[1,\infty)$, by M\"uller \cite{Mul1} and
for the cubes $B^\infty$ by Bourgain \cite{B3}. The latter result
exhibits an interesting phenomenon, which shows that the
dimension-free estimates on $L^p(\RR^d)$ for $p\in(1, \infty]$ cannot
be extended to the weak type $(1, 1)$ endpoint.  Namely, the optimal
constant $C(B^\infty(d),1)$ in the weak type $(1,1)$ inequality, as
Aldaz \cite{Ald1} proved, grows to infinity with the dimension.
Additionally, if the range of scales $t$ in the definition of the
maximal operator $M_\ast^G$ is restricted to the dyadic values ($t\in \mathbb D := \{2^n : n \in \ZZ \}$), then
the constants in the strong type $(p,p)$ inequalities with
$p\in(1,\infty]$ are bounded uniformly in $d$ for any $G$, see
\cite{Car1}. For a more detailed account of the subject in the
continuous case, its history and extensive literature we refer to
\cite{DGM1} or \cite{BMSW4}.

Surprisingly, in the discrete setting there is no hope for estimating
$\calC(G,p)$ independently of the dimension and the convex symmetric
body. Fixing $1\leq \lambda_1<\cdots<\lambda_d<\ldots<\sqrt{2}$ and examining   the ellipsoids 
\begin{equation}
\label{eq:elip}
E(d):=\Big\{x\in \RR^d : \sum_{j=1}^d \lambda_j^2\, x_j^2\,\le 1 \Big\},
\end{equation}
it was proved in \cite[Theorem~2]{BMSW3} that for every $p\in(1, \infty)$ there
is a constant $C_p>0$ such that for all $d\in \NN$ we have
\begin{align}
\label{eq:17}
\mathcal C(E(d), p)\ge \sup_{\|f\|_{\ell^p(\ZZ^d)}\le 1}\big\|\sup_{0<t\le d }|\calM_{t}^{E(d)} f|\big\|_{\ell^p(\ZZ^d)}
\ge
C_p(\log d)^{1/p}.
\end{align}
This inequality shows that if $p\in(3/2, \infty)$, then for sufficiently large $d$ the inequality
in \eqref{eq:16} with $G=E(d)$ is strict,
since from \cite{B2} and \cite{Car1} we know that there exists a
finite constant $C_p>0$ independent of the dimension such that
\begin{align}
\label{eq:32}
C(E(d), p)\le C_p.
\end{align}

On the other hand, for cubes $G=B^\infty(d)$, it
was also proved \cite[Theorem~3]{BMSW3} that for every
$p\in(3/2, \infty]$ there is a finite constant $C_p>0$ such that for
every $d\in\NN$ one has
\begin{align}
\label{eq:31}
\calC(B^\infty(d), p)\le C_p.
\end{align}
For $p\in(1, 3/2]$ it still remains open whether
$\calC(B^\infty(d), p)$ can be estimated independently of the
dimension. In view of the second part of Theorem \ref{thm:T}
interpolation does not help, since
$\calC(B^\infty(d),1)\ _{\overrightarrow{d\to\infty}} \infty$.

Inequalities \eqref{eq:17} and \eqref{eq:31} illustrate that the
dimension-free phenomenon in the discrete setting contrasts sharply
with the situation that we know from the continuous setup. However, as
it was shown in \cite[Theorem~2]{BMSW3}, if the dimension-free
estimates fail in the discrete setting it may only happen for small
scales, see also \eqref{eq:17}.  To be more precise, define the discrete restricted maximal
function
\begin{align}
\label{eq:24}
\calM_{\ast, > D}^G f(x) :=\sup_{t>D} \big|\calM_t^G f(x)\big|; \qquad x\in\ZZ^d, \quad D\ge 0,
\end{align}
corresponding to the averages from \eqref{eq:19}. For $D=0$ we see that $\calM_{\ast, > D}^G=\calM_{\ast}^G$.
Then by \cite[Theorem~1]{BMSW3} one has for arbitrary convex symmetric bodies
$G\subset\RR^d$  that there
exists $c(G)>0$ such that
\begin{align}
\label{eq:26}
\big\|\calM_{\ast, > c(G)d}^G f\big\|_{\ell^p(\ZZ^d)}
\leq
e^6 C(G, p)\|f\|_{\ell^p(\ZZ^d)},
\qquad f\in \ell^p(\ZZ^d).
\end{align}
Specifically, $\frac{1}{2}d^{1/2}\le c(E(d))\le d^{1/2}$ for the ellipsoids \eqref{eq:elip},  and consequently 
by \eqref{eq:26}, we also have
\begin{equation*}
\big\Vert\calM_{\ast, > d^{3/2}}^{E(d)} f\big\Vert_{\ell^p(\ZZ^d)}\leq e^6C(E(d), p)\Vert f\Vert_{\ell^p(\ZZ^d)}, \qquad f\in \ell^p(\ZZ^d),
\end{equation*}
which ensures dimension-free estimates for any $p\in (3/2, \infty]$ thanks to \eqref{eq:32}.

In the case of $q$-balls $G=B^q(d)$, $q\in[1,\infty]$, in view of \cite{B3} and \cite{Mul1}, inequality \eqref{eq:26}  comes down to
\begin{equation}
\label{eq:28}
\big\Vert\calM_{\ast, > d^{1+1/q}}^G f\big\Vert_{\ell^p(\ZZ^d)}\leq C_{p,q}\Vert f\Vert_{\ell^p(\ZZ^d)}, \qquad f\in \ell^p(\ZZ^d),
\end{equation}
for all $p\in(1,\infty]$, where $C_{p,q}$ denotes a constant that
depends on $p$ and $q$ but not on the dimension $d$. 
We close this discussion by gathering  a few conjectures that arose upon completing 
\cite{BMSW3}, \cite{BMSW4} and \cite{BMSW2}. 

\begin{conjecture}
\label{con:1}
Let $d\in\NN$ and let $B^q(d)\subset\RR^d$ be a $q$-ball.
\begin{enumerate}[label*={\arabic*}.]
\item (Weak form) Is it true that for
every $p\in(1, \infty)$ and $q\in [1, \infty]$ there exist constants  $C_{p, q}>0$ and $t_q>0$  such that
for
every $d\in\NN$  we have
\begin{align}
\label{eq:25}
\sup_{\|f\|_{\ell^p(\ZZ^d)}\le 1}\big\|\calM_{\ast, > t_q d}^{B^q(d)} f\big\|_{\ell^p(\ZZ^d)}\le C_{p, q} \, ? 
\end{align}
\item (Strong form)  Is it true that for
every $p\in(1, \infty)$ and $q\in[1, \infty]$  there exists a constant $C_{p, q}>0$ such that for
every $d\in\NN$  we have
\begin{align}
\label{eq:33}
\calC(B^q(d), p)\le C_{p, q} \, ?
\end{align}
\end{enumerate}
\end{conjecture}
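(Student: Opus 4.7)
The final statement is Conjecture~\ref{con:1}, so by ``proof'' I mean the plan of attack suggested by the paper's abstract, which establishes the weak form \eqref{eq:25} for $q$-balls with $q\in[2,\infty)$, and a dyadic--$\ell^2$ analogue with the even sharper threshold $2^n\geq C_qd^{1/q}$. The strong form \eqref{eq:33} appears to be out of reach of present techniques, so I would focus on \eqref{eq:25}.

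The first step is to split the supremum defining $\calM_{\ast,>t_qd}^{B^q(d)}f$ into a \emph{large} range $t>d^{1+1/q}$ and a \emph{medium} range $t_qd<t\le d^{1+1/q}$. The large range is handled for free by \eqref{eq:28}, so all the work is at the medium scales, where $t/d$ is only polynomially large and the ball $B^q_t$ is not yet so huge that lattice points distribute uniformly inside it.

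The second step is a Fourier--analytic treatment at $p=2$. The operator $\calM_t^{B^q(d)}$ acts on $\ell^2(\ZZ^d)$ as a Fourier multiplier on $\TT^d$ with symbol
\begin{equation*}
\mm_t^q(\xi)=\frac{1}{|B^q_t\cap\ZZ^d|}\sum_{y\in B^q_t\cap\ZZ^d}e^{-2\pi i\,\xi\cdot y}.
\end{equation*}
For $q\in[2,\infty)$ and $t\ge t_qd$, the count $|B^q_t\cap\ZZ^d|$ is comparable to $|B^q_t|$ with a $d$-independent constant, and Poisson summation allows one to compare $\mm_t^q$ pointwise (and in its first $\xi$-derivatives) with the continuous symbol $\widehat{\ind{B^q_t}/|B^q_t|}$, for which M\"uller \cite{Mul1} provides dimension-free decay. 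Plugging these estimates into a dyadic square-function/Rademacher--Menshov argument along $t=2^n\ge t_qd$, as in \cite{BMSW3}, yields the desired $\ell^2$ bound; since dyadic scales dominate the full supremum up to a constant (by the standard comparison $\calM_t^G\le\calM_{2^n}^G$ for $2^{n-1}\le t<2^n$ multiplied by $|B^q_{2^n}|/|B^q_{2^{n-1}}|=O(1)$), this controls the medium range.

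The third step is to pass from $\ell^2$ to $\ell^p$ for $p\in(1,\infty)$: interpolate between the $\ell^2$ bound just obtained and the trivial $\ell^\infty$ bound for $p>2$, and for $p\in(1,2)$ feed the Fourier-multiplier bounds into a Bourgain--Carbery-type $\ell^p$ argument (controlling short-variation and long-variation pieces separately), eventually using Theorem~\ref{thm:T} to transfer known dimension-free continuous estimates \eqref{eq:32} back to the discrete side on the long-variation part.

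The main obstacle, and the reason the method is limited to $q\in[2,\infty)$ and to scales $t\ge t_qd$ rather than all scales, is precisely the quality of the lattice-point / exponential-sum estimates at the transition $t\asymp t_qd$. The sharp oscillatory estimates for $\widehat{\ind{B^q_t}/|B^q_t|}$ do not transfer verbatim to the exponential sum $\mm_t^q$ unless one knows that the lattice points fill $B^q_t$ in a suitably uniform way, which for $q<2$ fails because the balls are too ``spiky'' and for small $t$ fails even when $q\ge 2$. Securing a genuinely $d$-independent comparison between $\mm_t^q$ and its continuous counterpart in the remaining range $t\le t_qd$ is exactly the gap between the weak and strong forms of the conjecture, and it is where I expect no soft argument to suffice.
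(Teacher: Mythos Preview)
Your plan diverges substantially from the paper's, and contains a genuine gap that would sink the argument.

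The fatal step is your claim that ``dyadic scales dominate the full supremum up to a constant'' via the doubling ratio $|B^q_{2^n}|/|B^q_{2^{n-1}}|=O(1)$. This ratio equals $2^d$, not a dimension-free constant; the same blow-up occurs for the lattice counts once $t\gtrsim d$, since by your own lattice-point comparison $|B^q_t\cap\ZZ^d|\simeq_q |B^q_t|$ in that range. Hence passing from a dyadic maximal bound to the full supremum is \emph{not} a free move in this problem, and the paper never attempts it. A second slip: Theorem~\ref{thm:T} gives $C(G,p)\le \calC(G,p)$, so it transfers discrete bounds to continuous, not the other way; it cannot be used to import \eqref{eq:32} into the discrete setting.

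What the paper actually does for the weak form (Theorem~\ref{thm:1}, $q\in(2,\infty)$) is entirely real-variable and bypasses Fourier analysis and dyadic reduction altogether. Given $f\ge 0$ on $\ZZ^d$ it builds $F(x)=\sum_{y\in\ZZ^d} f(y)\ind{y+Q}(x)$ and proves the pointwise comparison
\[
\mathcal M_N^{B^q} f(x)\lesssim_q \int_{x+Q} M_{N_2}^{B^q}F(y)\,{\rm d}y,\qquad N\ge ad,
\]
with $N_2=N(1+d^{-1}\tilde C_q)^{2/q}$; the key inputs are Hanner's inequality (to control $\norm{x+y}_q$ for $y\in Q$) and the lattice-point/volume comparison of Lemmas~\ref{lem:1}--\ref{lem:3}. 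This feeds directly into M\"uller's continuous dimension-free bound, yielding the full supremum over $N\ge ad$ for \emph{every} $p\in(1,\infty)$ in one stroke---no $\ell^2$ square function, no interpolation, no Poisson summation. Your Fourier-multiplier machinery is closer in spirit to the paper's Section~\ref{sec:4}, but that section only yields the dyadic $\ell^2$ result (Theorem~\ref{thm:10}) with the sharper threshold $N\ge C_q d^{1/q}$, and it relies on a decrease-dimension trick rather than Poisson summation.
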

A few comments about these conjectures are in order.
\begin{itemize}
\item[(i)] The first conjecture arose on the one hand in view of
the inequalities \eqref{eq:17} and \eqref{eq:28}, and on the other hand in
view of the result from \cite{BMSW4}, where it had been verified for
the Euclidean balls. Indeed, if $q=2$ then following \cite[Section
5]{BMSW4} one can see that \eqref{eq:28} holds with $ad$ in place of
$d^{1+1/q}$, where $a>0$ is a large absolute constant independent of
$d$. Since the dimension-free phenomenon may only break down for small
scales, the conjectured threshold from which we can expect
dimension-free estimates in the discrete setup is at the level of a
constant multiple of $d$.

\item[(ii)] The second conjecture says that one should expect
dimension-free estimates for $\calC(B^q(d), p)$ corresponding to
$q$-balls as we have for their continuous counterparts $C(B^q(d), p)$.
It was verified \cite{BMSW3} in the case of cubes
$\calC(B^{\infty}(d), p)$ with $p\in(3/2, \infty]$ as we have seen in
\eqref{eq:31}. Moreover, in \cite{BMSW2} the second and fourth author
in collaboration with Bourgain and Stein proved that the discrete
dyadic Hardy--Littlewood maximal function
$\sup_{n\in\NN}|\calM^{B^2}_{2^n} f|$ over the Euclidean balls
$B^2(d)$ has dimension-free estimates on $\ell^2(\ZZ^d)$. Although
this can be thought of as the first step towards establishing
\eqref{eq:33}, the general case seems to be very difficult even for
the Euclidean balls or cubes for $p\in(1, 3/2]$. This will surely
require new methods.
\end{itemize}

In our second main result of this paper we verify the first conjecture for the balls $B^q$ for all $q\in(2,\infty)$.

\begin{theorem}\label{thm:1}
For every $q\in(2,\infty)$ and each $a>0$ and $p\in(1,\infty)$ there exists $C(p,q,a)>0$ independent of the dimension $d\in\NN$ such that
for all $f\in\ell^p(\ZZ^d)$ we have
\begin{equation*}
\big\|\sup_{N \geq a d}|\calM_N^{B^q}f|\big\|_{\ell^p(\ZZ^d)}\leq C(p,q,a) \|f\|_{\ell^p(\ZZ^d)}.
\end{equation*}
\end{theorem}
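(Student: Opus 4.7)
The plan is to derive Theorem \ref{thm:1} from the dimension-free continuous estimate $\|M_\ast^{B^q}F\|_{L^p(\RR^d)}\le C(p,q)\|F\|_{L^p(\RR^d)}$ of M\"uller \cite{Mul1}, by establishing a dimension-free comparison between the discrete averages $\calM_N^{B^q}$ and their continuous counterparts $M_N^{B^q}$ at scales $N\ge ad$. This follows and extends the strategy for $q=2$ indicated in the comments following Conjecture \ref{con:1} and carried out in \cite[Section 5]{BMSW4}.

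The comparison is Fourier-analytic. For each $N$, the Fourier multiplier of $\calM_N^{B^q}$ acting on $\ell^2(\ZZ^d)$ (via the Fourier transform on $\TT^d$) is
\begin{equation*}
\mathcal{K}_N^{B^q}(\xi)
= \frac{1}{|B^q_N\cap\ZZ^d|}\sum_{y\in B^q_N\cap\ZZ^d}e^{-2\pi i y\cdot\xi},\qquad \xi\in\TT^d,
\end{equation*}
while the continuous multiplier of $M_N^{B^q}$ on $L^2(\RR^d)$ is $K_N^{B^q}(\xi)=|B^q_N|^{-1}\widehat{\ind{B^q_N}}(\xi)$. Poisson summation yields
\begin{equation*}
|B^q_N\cap\ZZ^d|\cdot \mathcal{K}_N^{B^q}(\xi)
=\sum_{k\in\ZZ^d}\widehat{\ind{B^q_N}}(\xi+k),\qquad \xi\in\TT^d.
\end{equation*}
Separating the $k=0$ term and using Fourier decay estimates for $\widehat{\ind{B^q_N}}$, one shows that for $q\in[2,\infty)$ and $N\ge ad$ the $k\neq 0$ contributions are negligible uniformly in $d$. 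In particular, $|B^q_N\cap\ZZ^d|=(1+o_d(1))|B^q_N|$ and $\mathcal{K}_N^{B^q}(\xi)\approx K_N^{B^q}(\xi)$ on $\TT^d$, with an error controllable uniformly in $N\ge ad$.

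Having established the Fourier comparison uniformly in $N$, one transfers the continuous maximal inequality via a lacunary decomposition of $\{N\ge ad\}$ combined with a square-function / numerical-difference argument in the spirit of Carbery \cite{Car1}: between consecutive lacunary scales the multipliers vary slowly, which, together with M\"uller's bound for the continuous maximal over $B^q$, yields the desired $\ell^p(\ZZ^d)$ estimate. The main obstacle is the Fourier decay step for $q\in(2,\infty)$: unlike the Euclidean case $q=2$, where $\widehat{\ind{B^2}}$ admits an explicit Bessel representation with uniform curvature-driven decay, for $q\in(2,\infty)$ the surface $\partial B^q$ has degenerate curvature at its axis-aligned vertices $\pm e_i$, so one must decompose $\partial B^q$ into regions of nondegenerate curvature and axis-aligned pieces and exploit the coordinate symmetry of $B^q$ to obtain sharp enough estimates. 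This is the step that adapts \cite[Section 5]{BMSW4} from $q=2$ to the full range $q\in(2,\infty)$.
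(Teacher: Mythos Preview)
Your proposal has a genuine gap at its core: the claim that ``the $k\neq 0$ contributions [in Poisson summation] are negligible uniformly in $d$'' is not supported by any available Fourier decay estimate, and in fact appears to be out of reach by this method. The dimension-free decay that one actually has for the normalized multiplier $m^{B^q}(\xi)=|B^q|^{-1}\widehat{\ind{B^q}}(\xi)$ is of the order $(d^{-1/q}|\xi|)^{-1}$ (see Lemma~\ref{lem:19}), which is far too weak to control a sum over the lattice $\ZZ^d\setminus\{0\}$ with a constant independent of $d$. If instead you try to use the dimension-dependent stationary-phase decay $|\xi|^{-(d+1)/2}$ coming from curvature, this exponent is still below the summability threshold $d$ for the lattice $\ZZ^d$ as soon as $d\ge 2$; and for $q>2$ the degenerate curvature at the axis vertices only makes matters worse. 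There is also the preliminary issue that Poisson summation for the indicator of a convex body is a delicate lattice-point counting statement, but even granting it formally, the error analysis you sketch does not go through. The student's description also misidentifies the content of \cite[Section~5]{BMSW4}: that argument is not Fourier-analytic but a direct pointwise comparison in physical space.

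The paper's proof proceeds entirely in physical space. One extends $f$ to the step function $F(x)=\sum_{y\in\ZZ^d}f(y)\ind{y+Q}(x)$ and shows pointwise that $\calM_N^{B^q}f(x)\lesssim_q M_{N_1}^{B^q}F(x)$ and then $M_{N_1}^{B^q}F(x)\lesssim_q\int_{x+Q}M_{N_2}^{B^q}F(y)\,{\rm d}y$, with $N_1,N_2$ dilates of $N$ by factors $1+O_q(d^{-1})$. The key geometric input is Hanner's inequality \eqref{eq:Han_ineq} (valid for $q\ge 2$), combined with Newton's generalized binomial theorem, which gives the two-sided volume comparison $|B^q_N|\simeq_q|B^q_N\cap\ZZ^d|$ with dimension-free constants when $N\ge ad$ (Lemmas~\ref{lem:1}--\ref{lem:3}). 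One then invokes M\"uller's dimension-free bound for $M_\ast^{B^q}$ on $L^p(\RR^d)$. No Fourier analysis, no Poisson summation, and no square-function machinery is used here; the multiplier estimates in Section~\ref{sec:4} serve a different theorem.
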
 

The proof of Theorem \ref{thm:1} is presented in Section \ref{sec:Bq};
it relies on the methods developed in \cite[Section 5]{BMSW4}, Hanner's inequality \eqref{eq:Han_ineq} and
Newton's generalized binomial theorem. It follows from
\cite[Theorem~2]{BMSW4} that Theorem \ref{thm:1} remains true for
$q=2$, but only for $a>0$, which is large enough. Our proof of Theorem \ref{thm:1}
can be easily adapted to yield the same result. We point out necessary changes in the proof. Since we rely on Hanner's inequality our proof of Theorem \ref{thm:1} does not carry over to $q \in [1,2).$ This is because for such values of $q$ the inequality \eqref{eq:Han_ineq} is reversed.

Our final result concerns the dyadic maximal operator associated with $q$-balls.
\begin{theorem}
\label{thm:10}
Fix $q \in [2, \infty)$. Let $C_1, C_2>0$ and define
$\mathbb D_{C_1, C_2}:=\{N\in\mathbb D:C_1d^{1/q}\le N\le C_2d\}$. Then
there exists a constant $C_q>0$ independent of the dimension such that
for every $f\in \ell^2(\ZZ^d)$ we have
\begin{align}
\label{eq:20}
\big\|\sup_{N\in \mathbb D_{C_1, C_2}}|\mathcal M_N^{B^q}f|\big\|_{\ell^2(\ZZ^d)}\le C_q\|f\|_{\ell^2(\ZZ^d)}.
\end{align}
\end{theorem}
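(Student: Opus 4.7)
The plan is a purely Fourier analytic argument on $\ell^2(\ZZ^d)\cong L^2(\TT^d)$, along the lines of the Euclidean case $q=2$ treated in \cite{BMSW2}. I would represent each $\calM_N^{B^q}$ as a Fourier multiplier on the torus with symbol
\begin{equation*}
\mathfrak m_N(\xi):=\frac{1}{\abs{B^q_N\cap\ZZ^d}}\sum_{y\in B^q_N\cap\ZZ^d}e^{-2\pi i y\cdot \xi},\qquad \xi\in\TT^d,
\end{equation*}
and compare it with the symbol of the continuous averaging operator $M_N^{B^q}$,
\begin{equation*}
\Phi_N(\xi):=\frac{1}{\abs{B^q_N}}\int_{B^q_N}e^{-2\pi i y\cdot \xi}\,\dif y,\qquad \xi\in\RR^d.
\end{equation*}
The preliminary ingredient is a volume-packing lemma showing that the restriction $N\ge C_1 d^{1/q}$ guarantees $\abs{B^q_N\cap\ZZ^d}\asymp\abs{B^q_N}$ with constants independent of $d$; this is precisely the scale at which the lattice resolves the body.

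The main decomposition reads
\begin{equation*}
\sup_{N\in\mathbb D_{C_1,C_2}}\abs{\calM_N^{B^q}f}\le \sup_{N\in\mathbb D_{C_1,C_2}}\abs{T_N^{\Phi}f}+\Bigl(\sum_{N\in\mathbb D_{C_1,C_2}}\abs{T_N^{\mathfrak m-\Phi}f}^2\Bigr)^{1/2},
\end{equation*}
where $T_N^{\Phi}$ and $T_N^{\mathfrak m-\Phi}$ denote the multiplier operators on $\TT^d$ with symbols $\Phi_N\vert_{\TT^d}$ and $\mathfrak m_N-\Phi_N$, respectively. The first summand is handled by a standard de Leeuw / Magyar--Stein--Wainger type transference argument, reducing it to the continuous dyadic maximal $\sup_N\abs{M_N^{B^q}F}$ on $L^2(\RR^d)$; the latter is dominated by $M_\ast^{B^q}F$ and carries dimension-free bounds for every $q\in[1,\infty)$ thanks to M\"uller \cite{Mul1}. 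The upper restriction $N\le C_2 d$ enters here by keeping the Fourier mass of the continuous kernel concentrated inside $\TT^d$, so that the transference is lossless.

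For the error term, Plancherel converts the square-function norm into
\begin{equation*}
\int_{\TT^d}\abs{\hat f(\xi)}^2 \sum_{N\in\mathbb D_{C_1,C_2}}\abs{\mathfrak m_N(\xi)-\Phi_N(\xi)}^2\,\dif\xi,
\end{equation*}
and the task reduces to the pointwise multiplier estimate
\begin{equation*}
\sum_{N\in\mathbb D_{C_1,C_2}}\abs{\mathfrak m_N(\xi)-\Phi_N(\xi)}^2\le C_q,\qquad \xi\in\TT^d,
\end{equation*}
uniformly in $d$. Via Poisson summation, $\mathfrak m_N(\xi)-\Phi_N(\xi)$ splits into a volume correction produced by the packing lemma plus a Poisson tail $\abs{B^q_N}^{-1}\sum_{k\in\ZZ^d\setminus\{0\}}\widehat{\ind{B^q_N}}(\xi+k)$, whose control will rest on oscillatory integral estimates for $\widehat{\ind{B^q_N}}$.

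The main obstacle is precisely this multiplier estimate in a dimension-free manner. A naive count would lose a $\log d$ factor from the $O(\log d)$ dyadic scales in $\mathbb D_{C_1,C_2}$; to beat it, each individual term must carry summable decay in $N$. I would extract this by combining stationary phase estimates for $\widehat{\ind{B^q_N}}$ with the $q$-uniform convexity of $B^q$ for $q\in[2,\infty)$ — a property neatly encoded by Hanner's inequality \eqref{eq:Han_ineq} — and by careful use of both scale bounds $C_1 d^{1/q}\le N\le C_2 d$ to control the range of summation and the contributing $k$-terms. For $q=2$ this is essentially the content of \cite{BMSW2}; extending those ideas to $q\in[2,\infty)$, where the decay exponents deteriorate but remain summable, is the technical heart of the proof of \eqref{eq:20}.
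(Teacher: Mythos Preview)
Your plan diverges substantially from the paper's proof and contains a genuine gap at its most delicate point. The paper does \emph{not} compare $\calM_N^{B^q}$ with the continuous average $M_N^{B^q}$; it compares instead with the discrete symmetric diffusion semigroup $P_t$ whose multiplier is $e^{-t\sum_i \sin^2(\pi\xi_i)}$, for which dimension-free maximal bounds follow from Stein's general theory. The resulting square function $\big(\sum_N |\calM_N^{B^q}f - P_{N^2/d^{2/q}}f|^2\big)^{1/2}$ is then controlled via two direct estimates on the symbol $\mathfrak m_N^{B^q}$ itself: the bound $|\mathfrak m_N^{B^q}(\xi)-1|\lesssim \kappa^2\|\xi\|^2$ near the origin (Proposition~\ref{prop:1}) and the decay $|\mathfrak m_N^{B^q}(\xi)|\lesssim (\kappa\|\xi\|)^{-1}+\kappa^{-1/7}$ away from it (Proposition~\ref{prop:2}), where $\kappa=Nd^{-1/q}$. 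The second estimate is the heart of the matter and uses no Poisson summation whatsoever: it exploits the invariance of $B^q_N\cap\ZZ^d$ under ${\rm Sym}(d)$ together with a \emph{decrease-dimension trick}, slicing the discrete ball as a union of products of low-dimensional balls (of dimension $r\simeq\kappa^{2/7}$) with discrete spheres (Lemma~\ref{lem:9}), so that the problem reduces to $r$-dimensional multipliers $\mathfrak m_R^{B^q,(r)}$ for which the comparison with the continuous side (Lemma~\ref{lem:20}) is cheap because $r$ is tiny relative to the radius.

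The gap in your scheme is the Poisson tail. After Poisson summation you must control $\sum_{k\neq 0}\Phi_N(\xi+k)$, and the only \emph{dimension-free} decay available for $\Phi_N$ is $|\Phi_N(\eta)|\lesssim (c_q\kappa\,|\eta|)^{-1}$ (this is precisely Lemma~\ref{lem:19}). For $\xi\in[-1/2,1/2)^d$ and $k=\pm e_j$ one has $|\xi+k|\simeq 1$, so each of the $2d$ nearest neighbours already contributes $\simeq\kappa^{-1}$; the sum is then $\gtrsim d/\kappa$, which at the small end of $\mathbb D_{C_1,C_2}$ (where $\kappa\simeq C_1$) is unbounded in $d$. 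Faster pointwise decay of $\widehat{\ind{B^q_N}}$ exists, of course, but only with dimension-dependent constants, and one must still offset an exponential number of lattice points; no such argument is known, and neither stationary phase nor Hanner's inequality addresses this. Your remark that ``for $q=2$ this is essentially the content of \cite{BMSW2}'' is not accurate: that paper uses the same permutation-symmetry dimension-reduction mechanism just described, not Poisson summation. Finally, your preliminary volume-packing lemma $|B^q_N\cap\ZZ^d|\asymp|B^q_N|$ is only established here for $N\ge d^{1/2+1/q}$ (Lemma~\ref{lem:1}) and $N\gtrsim d$ (Lemma~\ref{lem:3}), not down to $N\sim d^{1/q}$.
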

Theorem \ref{thm:10} is an incremental step towards establishing the
second conjecture. By adapting the ideas developed in \cite{BMSW2} we
are able to obtain dimension-free estimates for the discrete
restricted dyadic Hardy--Littlewood maximal functions over $q$-balls
for all $q\in[2, \infty)$.  Theorem \ref{thm:10} generalizes
\cite[Theorem 2.2]{BMSW3}, which was stated for $q = 2$.  The proof of
inequality \eqref{eq:20} as in \cite{BMSW2} exploits the invariance of
$B_N^q\cap\ZZ^d$ under the permutation group of $\NN_d$.  Then we can
 efficiently use probabilistic arguments on a permutation group
corresponding to $\NN_d$ that reduce the matter to the decrease dimension trick
as in \cite{BMSW2}. The proof of Theorem \ref{thm:10} is a technical
elaboration of the methods from \cite{BMSW2}. However, for the
convenience of the reader, mainly due to intricate technicalities we decided
to provide necessary details in Section \ref{sec:4}. We remark that the condition $q \in [2, \infty)$ cannot be dropped in our proof of  Theorem \ref{thm:10} as it is required in the estimate at the origin from Proposition \ref{prop:1}.

\subsection{Notation} The following basic notation will be used 
throughout the paper.

\begin{enumerate}[label*={\arabic*}.]
\item We will write $A \lesssim_{\delta} B$
($A \gtrsim_{\delta} B$) to say that there is an absolute constant
$C_{\delta}>0$ (which depends on a parameter $\delta>0$) such that
$A\le C_{\delta}B$ ($A\ge C_{\delta}B$).  We will write
$A \simeq_{\delta} B$ when $A \lesssim_{\delta} B$ and
$A\gtrsim_{\delta} B$ hold simultaneously. We shall abbreviate subscript $\delta$ if irrelevant. 
\item 
Let $\NN:=\{1,2,\ldots\}$ be the set of positive integers and $\NN_0 := \NN\cup\{0\}$, and
$\mathbb D:=\{2^n: n\in\ZZ\}$ will denote the set of dyadic numbers.
We set $\NN_N := \{1, 2, \ldots, N\}$ for any $N \in \NN$. 
\item For a measurable set
$A\subseteq \RR^d$ we denote by $|A|$ the Lebesgue measure of $A$ and by
$|A\cap \ZZ^d|$ the number of lattice points in $A.$

\item 
The Euclidean space $\RR^d$
is endowed with the standard inner product
\[
x\cdot\xi:=\langle x, \xi\rangle:=\sum_{k=1}^dx_k\xi_k
\]
for every two vectors $x=(x_1,\ldots, x_d)$ and
$\xi=(\xi_1, \ldots, \xi_d)$ from $\RR^d$. Let $|x|_2:=\sqrt{\langle x, x\rangle}$ denote  the Euclidean norm of a vector
$x\in\RR^d$. The Euclidean open ball centered
at the origin with radius one will be denoted by $B^2$.
We shall abbreviate $B^2$ to $B$ and $|\cdot|_2$ to $|\cdot|$.

\item Let $(X, \mu)$ be a measure space $X$ with a $\sigma$-finite
measure $\mu$.  The space of all measurable functions whose modulus is
integrable with $p$-th power is denoted by $L^p(X)$ for
$p\in(0, \infty)$, whereas $L^{\infty}(X)$ denotes the space of all
measurable essentially bounded functions. The space of all measurable
functions that are weak type $(1, 1)$ will be denoted by
$L^{1, \infty}(X)$.  In our case we will usually have $X=\RR^d$ or
$X=\TT^d$ equipped with Lebesgue measure, and $X=\ZZ^d$ endowed
with counting measure. If $X$ is endowed with counting measure we
will abbreviate $L^p(X)$ to $\ell^p(X)$ and $L^{1, \infty}(X)$ to $\ell^{1, \infty}(X)$.  If the context causes no
confusions we will also abbreviate $\|\cdot\|_{L^p(\RR^d)}$ to
$\|\cdot\|_{L^p}$ and $\|\cdot\|_{\ell^p(\ZZ^d)}$ to
$\|\cdot\|_{\ell^p}$.

\item Let $\calF$ denote the Fourier transform on $\RR^d$ defined for any function 
$f \in L^1\big(\RR^d\big)$ as
\begin{align*}
\calF f(\xi) := \int_{\RR^d} f(x) e^{2\pi i \sprod{\xi}{x}} {\: \rm d}x \quad \text{for any}\quad \xi\in\RR^d.
\end{align*}
If $f \in \ell^1\big(\ZZ^d\big)$ we define the discrete Fourier
transform by setting
\begin{align*}
\hat{f}(\xi) := \sum_{x \in \ZZ^d} f(x) e^{2\pi i \sprod{\xi}{x}} \quad \text{for any}\quad \xi\in\TT^d,
\end{align*}
where $\TT^d$ denotes the $d$-dimensional torus, which will be identified
with $Q:=[-1/2, 1/2]^d$.  To simplify notation we will denote by
$\mathcal F^{-1}$ the inverse Fourier transform on $\RR^d$ or the
inverse Fourier transform (Fourier coefficient) on the torus
$\TT^d$. It will cause no confusions since their meaning will be always clear from the context.

\end{enumerate}

\section{Transference of strong and weak type inequalities: Proof of Theorem \ref{thm:T}}
\label{sec:Tr}

Here we elaborate the arguments from the master thesis of the first author to prove Theorem \ref{thm:T}. The general idea behind the proof of \eqref{eq:16} is as follows. We fix a non-negative bump function $F \colon \RR^d \to \RR$ for which the constant in the corresponding maximal inequality is almost $C(G,p)$. Since dilations are available in the continuous setting, $F$ can be taken to be very slowly varying. Then we sample the values of $F$ at lattice points to produce $f \colon \ZZ^d \to \RR$. Because $F$ is regular, the norms of $F$ and $f$ are almost the same. Moreover, we deduce that $M_*^G F$ cannot be essentially larger than $\mathcal{M}_*^G f$. Indeed, for  $F$ being slowly varying its maximal function is slowly varying as well. Also, given $n \in \ZZ^d$ we see that $M_t^G F(n)$ is certainly not much greater than $f(n)$, unless $t$ is very large. For large values of $t$, in turn, the sets $G_{t} \cap \ZZ^d$ are regular, making the quantities $M_t^G F(n)$ and $\mathcal{M}^G_{t} f(n)$ comparable to each other. The constant in the maximal inequality associated with $f$ is then at least not much smaller than $C(G,p)$. Thus, \eqref{eq:16} holds.

\begin{proof}[Proof of Theorem \ref{thm:T}]
Let $G \subset \RR^d$ be a convex symmetric body. Let $r\in(0, 1)$ and $R>1$ be real numbers such that $B_r\subset G\subset B_R$, where $B_t$ is the Euclidean ball centered at the origin  with radius $t>0$. 
We may assume that $p\in[1, \infty)$, otherwise there is nothing to do. We now distinguish three cases. In the first two cases we prove \eqref{eq:16} for arbitrary $G$ and all $p\in[1, \infty)$. In the third case we show that the equality is attained in \eqref{eq:16} if $G=B^{\infty}$ and $p=1$.
\paragraph{\bf Case 1, when  $p \in (1, \infty)$} Fix
$\eta \in (0,1)$ and take $F \in C_{\rm c}^\infty(\RR^d)$ such that
$F \geq 0$ and
\begin{equation}\label{T1}
\| M^G_* F \|_{L^p(\RR^d)} \geq (1-\eta) C(G,p) \|F\|_{L^p(\RR^d)}.
\end{equation} 
For each $K \in \NN$ let us define $F_K$ by setting
$F_K(x) := F(\frac{x}{K})$. Note that
$\|F_K\|_{L^p(\RR^d)} = K^{d/p} \, \|F\|_{L^p(\RR^d)}$. Moreover,
since $M^G_* F_K (x) = M^G_* F (\frac{x}{K})$, we have
$\| M^G_* F_K \|_{L^p(\RR^d)} = K^{d/p} \, \| M^G_* F \|_{L^p(\RR^d)}$. Next,
we define $f_K \colon \ZZ^d \rightarrow [0, \infty)$ by setting
$f_K(n) := F_K(n)$ for $n \in \ZZ^d$. Then we immediately have
\begin{align}
\label{eq:34}
\|F\|_{L^p(\RR^d)} = \lim_{K \rightarrow \infty} \Big( \frac{1}{K^d}\sum_{n \in \ZZ^d } f_K(n)^p \Big)^{1/p}.
\end{align}
Thus for all sufficiently large $K\in \NN$ (say $K \geq K_1$) we see
\begin{equation}\label{T2}
\| f_K \|_{\ell^p(\ZZ^d)} \leq (1+\eta) K^{d/p} \|F\|_{L^p(\RR^d)}. 
\end{equation}

Choose $N \in \NN$ such that
\begin{equation}\label{T3}
\| M^G_* F \cdot \ind{[-N,N]^d} \|_{L^p(\RR^d)} \geq (1 - \eta) \, \|M^G_* F\|_{L^p(\RR^d)}.
\end{equation}
In a similar way as in \eqref{eq:34}, we conclude that there exists $K_2$ such that for all $K \geq K_2$ we have
\begin{equation}\label{T4}
\Big( \frac{1}{K^d}\sum_{n \in \ZZ^d \cap [-NK, NK]^d} M^G_* F_K(n)^p \Big)^{1/p}
\geq (1 - \eta) \, \| M^G_* F \cdot \ind{[-N,N]^d} \|_{L^p(\RR^d)}.
\end{equation}

Let $\kappa > 0$ be such that $M^G_*F_K(n) \geq \kappa$ for each
$K \in \NN$ and $n \in \ZZ^d \cap [-NK,NK]^d$. We fix
$\varepsilon \in (0, \eta \kappa/2)$ and take $\delta > 0$ for which
$|x - y| < \delta$ implies $|F(x) - F(y)| < \varepsilon$. Since $G_t\subset B_{tR}$, we obtain
\begin{displaymath}
M^G_tF(x) \leq F(x)+\varepsilon, \qquad  t \in (0, \delta R^{-1}),
\end{displaymath}
or, equivalently,
\begin{displaymath}
M^G_tF_K(x) \leq F_K(x)+\varepsilon, \qquad  t \in (0, K \delta R^{-1}).
\end{displaymath}
Our goal is to prove that
\begin{equation}\label{T5}
\calM_*^G f_K(n) \geq (1 - \eta) \, M^G_*F_K(n), \qquad  n \in \ZZ^d \cap [-NK,NK]^d,
\end{equation}
if $K$ is large enough. To this end we shall show separately that
\begin{displaymath}
\calM_*^G f_K(n) \geq M^G_t F_K(n) - \eta \kappa, \qquad t \in (0, K \delta R^{-1}),
\end{displaymath}
and
\begin{displaymath}
\quad \calM_*^G f_K(n) \geq (1 - \eta/2) \, M^G_t F_K(n) - \eta \kappa / 2, \qquad t \geq K \delta R^{-1}.
\end{displaymath} 
	
Fix $n \in \ZZ^d \cap [-NK,NK]^d$. Obviously, if
$t \in (0, K \delta R^{-1})$, then the first inequality follows
\begin{displaymath}
M^G_tF_K(n) \leq F_K(n) + \varepsilon \leq \calM_*^G f_K(n) + \varepsilon \leq \calM_*^G f_K(n) + \eta \kappa.
\end{displaymath}
Hence, we are reduced to prove the second estimate for
$M^G_t F_K(n)$ in the case $t \geq K \delta R^{-1}$. Let $\rho \in (0,1)$ be
such that $|G_{1+2\rho} | \leq (1-\eta/2)^{-1} |G|$ and assume that
$K \geq K_3 := \sqrt{d}R / (r \delta \rho)$. Therefore, for each
$t \geq K \delta R^{-1}$ we have $t + \sqrt{d}/r \leq (1+ \rho)t$. Let $Q_m:=m+B^\infty_{1/2}$
be the cube centered at $m \in \ZZ^d$ and of side length $1$. If $Q_m \cap G_t \neq \emptyset$ for some
$m \in \ZZ^d$, then one can
easily see that
$Q_m \subseteq G_{t+\sqrt{d}/r} \subseteq G_{(1+\rho)t}$, provided
$t \geq K_3 \delta R^{-1}$.  Consequently, we conclude
\begin{align*}
\calM_{t + \sqrt{d}/r}^G f_K(n) & = \frac{1}{|G_{t + \sqrt{d}/r} \cap \ZZ^d|} \, \sum_{m \in G_{t + \sqrt{d}/r} \cap \ZZ^d} f_K(n-m) \\
& \geq  \frac{1}{|G_{t + \sqrt{d}/r} \cap \ZZ^d|} \, \sum_{m \in G_{t + \sqrt{d}/r} \cap \ZZ^d} \Big( \int_{Q_m} F_K(n-x) \, {\rm d}x - \varepsilon \Big) \\
& \geq \Big( \frac{1}{|G_{t + \sqrt{d}/r} \cap \ZZ^d|} \, \int_{G_t} F_K(n-x) \, {\rm d} x \Big) - \varepsilon \\
& \geq \frac{|G_t|}{|G_{(1+\rho)t} \cap \ZZ^d|} M^G_t F_K(n) - \eta \kappa / 2,
\end{align*}
where in the first inequality we have used that
$K \geq K_3 \geq \sqrt{d} / \delta.$ Hence, it remains to show
\begin{displaymath}
|G_t| \geq (1-\eta/2) \ |G_{(1+\rho)t} \cap \ZZ^d|.
\end{displaymath} 
We notice that if $m \in G_{(1+\rho)t} \cap \ZZ^d$, then
$Q_m \subseteq G_{(1+\rho)t + \sqrt{d}/r} \subseteq G_{(1+2\rho)t}$. Thus
we obtain
\begin{displaymath}
|G_{(1+\rho)t} \cap \ZZ^d| \leq |G_{(1+2\rho)t}| \leq (1-\eta/2)^{-1} \, |G_t|.
\end{displaymath}
Finally, combining \eqref{T1}, \eqref{T2}, \eqref{T3}, \eqref{T4}, and
\eqref{T5}, for any $K \geq \max\{K_1, K_2, K_3\}$ we have
\begin{displaymath}
\| \calM^G_* f_K \|_{\ell^p(\ZZ^d)} \geq \frac{(1- \eta)^4}{1 + \eta} \, C(G,p) \, \|f_K \|_{\ell^p(\ZZ^d)}.
\end{displaymath}
Hence, since $\eta \in (0,1)$ was arbitrary, we conclude that
$\calC(G,p) \geq C(G,p)$.
	
\paragraph{\bf Case 2, when $p =1$} The inequality
$\calC(G,1) \geq C(G,1)$ can be deduced in a similar way as it was
done for $p \in (1, \infty)$. We only describe necessary changes. Namely, as in \eqref{T1} we 
fix
$\eta \in (0,1)$ and take $F \in C_{\rm c}^\infty(\RR^d)$ such that
$F \geq 0$ and
\begin{equation}\label{T1'}
\| M^G_* F \|_{L^{1, \infty}(\RR^d)} \geq (1-\eta) C(G,1) \|F\|_{L^1(\RR^d)}.
\end{equation} 
Then we choose $N \in \NN$ such that
\begin{equation}\label{T3'}
\| M^G_* F \cdot \ind{[-N,N]^d} \|_{L^{1, \infty}(\RR^d)} \geq (1 - \eta) \, \|M^G_* F\|_{L^{1, \infty}(\RR^d)}.
\end{equation}
It is easy to see  that for each $x_0, y_0 \in \RR^d$ one has
\begin{displaymath}
| M^G_* F (x_0) - M^G_* F(y_0) | \leq \sup_{|x-y|=|x_0-y_0|} \ |F(x) - F(y)|.
\end{displaymath}
This allows us to deduce for sufficiently large $K\in\NN$ that
\begin{align}
\label{eq:44}
\| M^G_* F_K \cdot \ind{\ZZ^d\cap[-NK,NK]^d} \|_{\ell^{1, \infty}(\ZZ^d)}
\geq (1 - \eta) K^d \, \| M^G_* F \cdot \ind{[-N,N]^d} \|_{L^{1, \infty}(\RR^d)}
\end{align}
with the function $F_K$ as in the previous case. From now on we may
proceed in much the same way as in the previous case to establish
\eqref{T5}. Once \eqref{T5} is proved we combine \eqref{T1'}, \eqref{T2} (with $p=1$), \eqref{T3'}, \eqref{eq:44} and \eqref{T5} to obtain
\begin{displaymath}
\| \calM^G_* f_K \|_{\ell^{1, \infty}(\ZZ^d)} \geq \frac{(1- \eta)^4}{1 + \eta} \, C(G,1) \, \|f_K \|_{\ell^1(\ZZ^d)}
\end{displaymath}
for any $K \geq \max\{K_1, K_2, K_3\}$. Since $\eta \in (0,1)$ was
arbitrary, we conclude that $\calC(G,1) \geq C(G,1)$ as desired. This
completes the first part of Theorem \ref{thm:T}. We now turn our
attention to the case $G = B^\infty$ and show that the last inequality
can be reversed.

\paragraph{\bf Case 3,  when $p =1$ and $G = B^\infty=[-1, 1]^d$}
Given $\eta \in (0,1)$ consider $f \in \ell^1(\ZZ^d)$ and
$\lambda > 0$ such that $f \geq 0$ and
\begin{displaymath}
\lambda \, |\{ n \in \ZZ^d : \calM^{B^\infty}_* f(n) > \lambda \}| \geq (1- \eta) \,  \calC(B^\infty, 1) \, \| f \|_{\ell^1(\ZZ^d)}.
\end{displaymath}
Let $Q_\delta(x):=x+B^\infty_{\delta/2}$ denote
the cube centered at $x\in\RR^d$ and of side length $\delta>0$. For
$\delta\in(0, 1)$, we set
\begin{displaymath}
F_\delta(x) := \sum_{n \in \ZZ^d} f(n) \, |Q_\delta(n)|^{-1}\ind{Q_\delta(n)}(x).
\end{displaymath}
Clearly $\|F_\delta\|_{L^1(\RR^d)} = \| f \|_{\ell^1(\ZZ^d)}$, this is the place where it is essential that we are working with $p=1$.

We show that for each
$n \in \ZZ^d$ one has
\begin{equation}
\label{eq:Ta}
M^{B^\infty}_* F_\delta(x) \geq \calM^{B^\infty}_* f(n), \qquad x \in  Q_{1-\delta}(n).
\end{equation}
To prove \eqref{eq:Ta} we note that
\[
\calM^{B^\infty}_* f(n)=\sup_{t>0}\calM^{B^\infty}_t f(n)=\sup_{N\in\NN_0}\calM^{B^\infty}_N f(n)
\]
since
$|B_t^{\infty}\cap\ZZ^d|=|B_{\lfloor t\rfloor}^{\infty}\cap\ZZ^d|= (2\lfloor t\rfloor+1)^d$,
where $\calM^{B^\infty}_0 f:=f$.

If $N=0$, then for each $n \in \ZZ^d$ and $x \in Q_{1-\delta}(n)$ we
obtain
\begin{align*}
\calM^{B^\infty}_0 f(n)=f(n)= \int_{Q_{1}(x)}F_{\delta}(y)\dif y \le M^{B^\infty}_* F_\delta(x).
\end{align*}

If $N \in \NN$, then $n+ B_N^{\infty}\subseteq x+B_{N+\frac{1-\delta}{2}}^{\infty}$ 
for each $n \in \ZZ^d$ and 
$x \in Q_{1-\delta}(n)$. Therefore, 
\begin{align*}
\calM^{B^\infty}_N f(n)&=
\frac{1}{|n+ B_N^{\infty}\cap\ZZ^d|} \sum_{k\in n+ B_N^{\infty}\cap\ZZ^d}f(k)\\
&\le
\frac{1}{|n+ B_N^{\infty}\cap\ZZ^d|} \sum_{k\in\ZZ^d }\int_{Q_{\delta}(k)}\ind{x+B_{N+\frac{1-\delta}{2}}^{\infty}}(k)f(k) |Q_{\delta}(k)|^{-1}\ind{Q_{\delta}(k)}(y)\dif y\\
&\le \frac{1}{|x+B_{N+1/2}^{\infty}|}\int_{x+B_{N+1/2}^{\infty}} F_{\delta}(y)\,\dif y
\le M^{B^\infty}_* F_\delta(x),
\end{align*}
since $|n+ B_N^{\infty}\cap\ZZ^d|=(2N+1)^d=|x+B_{N+1/2}^{\infty}|$.
Hence \eqref{eq:Ta} follows, and consequently we obtain
\begin{align*}
\lambda \, |\{ x \in \RR^d : M^{B^\infty}_* F_\delta(x) > \lambda \}|
& \geq \lambda \, (1-\delta)^d \, |\{ n \in \ZZ^d : \calM^{B^\infty}_* f(n) > \lambda \}| \\
& \geq (1-\eta) \, (1-\delta)^d \, \calC(B^\infty, 1) \, \|F_\delta \|_{L^1(\RR^d)}.
\end{align*} 
Since $\eta$ and $\delta$ were arbitrary, we conclude that
$C(B^\infty, 1) \geq \calC(B^\infty, 1)$. Finally, combining this
inequality with the previous result we obtain
$C(B^\infty, 1) = \calC(B^\infty, 1)$ and the proof of Theorem \ref{thm:T} is completed.
\end{proof}

\section{Discrete maximal operator for $B^q$ and large scales: Proof of Theorem \ref{thm:1}}
\label{sec:Bq}

The purpose of this section is to prove Theorem \ref{thm:1}. We will follow the ideas from
\cite[Section 5]{BMSW4}. From now on we assume that $q\in(2,\infty)$ is fixed. By $Q := B^\infty_{1/2}$ we mean the unit cube centered at the origin.

\begin{lemma}\label{lem:1}
Let
$\tilde{C}_q := \max \big\{ \sum_{k=1}^\infty \big| {q \choose 2k} \big| \, 2^{-2k}, \big(\frac{3}{2}\big)^q \big\}$. If
$N\geq d^{\frac{1}{2}+\frac{1}{q}}$, then
\begin{equation*}
|B_N^q\cap\ZZ^d|\leq 2 |B_{N_1}^q|\leq 2 e^{\frac{\tilde{C}_q}{q}}|B_N^q|,
\end{equation*}
where $N_1:=N\big(1+ d^{-1} \tilde{C}_q \big)^{\frac{1}{q}}$.
\end{lemma}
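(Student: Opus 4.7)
The plan is to proceed by a volume-packing (``folding'') argument. Since the unit cubes $n + Q$ with $Q = [-\tfrac12,\tfrac12]^d$ are pairwise disjoint (up to null sets), identify
\[
|B_N^q \cap \ZZ^d| = \Bigl| \bigsqcup_{n \in B_N^q \cap \ZZ^d} (n + Q) \Bigr|.
\]
I would then define a map $\psi \colon (B_N^q \cap \ZZ^d) \times Q \to B_{N_1}^q \sqcup B_{N_1}^q$ by setting $\psi(n,y) = (n+y,\,\text{first copy})$ when $|n+y|_q \leq N_1$ and $\psi(n,y) = (n-y,\,\text{second copy})$ otherwise. If $\psi$ is well-defined (image inside $B_{N_1}^q$) and a.e.\ injective/measure-preserving, then $|B_N^q\cap\ZZ^d| \leq 2|B_{N_1}^q|$. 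A.e.\ injectivity of each branch is automatic: the lattice translates $n+Q$ tile $\RR^d$, so $n$ and $y$ are recovered from $n \pm y$ off a null set. The second inequality of the lemma is the standard estimate $(1+a/d)^{d/q}\le e^{a/q}$ applied with $a=\tilde C_q$ to $(N_1/N)^d$.

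The heart of the argument is well-definedness, i.e.\ showing that $\min(|n+y|_q,|n-y|_q) \leq N_1$ for every $(n,y)$, which I would deduce from the stronger symmetrized bound
\[
\tfrac{1}{2}\bigl(|n+y|_q^q + |n-y|_q^q\bigr) \leq N_1^q = N^q\bigl(1 + \tilde C_q/d\bigr).
\]
Split the coordinates into $I_1 = \{i : n_i \neq 0\}$ and $I_0 = \{i : n_i = 0\}$. On $I_1$, since $|n_i|\ge 1$ and $|y_i/n_i|\le 1/2$, Newton's generalized binomial theorem gives the exact identity
\[
\tfrac{1}{2}\bigl((|n_i|+y_i)^q+(|n_i|-y_i)^q\bigr)=|n_i|^q+\sum_{k=1}^\infty\binom{q}{2k}|n_i|^{q-2k}y_i^{2k},
\]
and I would upper bound the tail by using $|y_i|^{2k}\le 2^{-2k}$ together with $|n_i|^{q-2k}\le |n_i|^{q-2}$ (valid for $k\geq 1$ because $|n_i|\ge 1$); the sum $\sum_{k\ge1}|\binom{q}{2k}|2^{-2k}$ is absorbed into $\tilde C_q$ by definition. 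On $I_0$, simply use $\tfrac{1}{2}(|y_i|^q + |{-}y_i|^q) = |y_i|^q \leq 2^{-q}$.

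Summing over $i$ and applying H\"older's inequality with exponents $q/(q-2)$ and $q/2$ produces
\[
\sum_{i\in I_1}|n_i|^{q-2} \;\le\; \Bigl(\sum_i |n_i|^q\Bigr)^{(q-2)/q}\cdot|I_1|^{2/q} \;\le\; N^{q-2}d^{2/q},
\]
which the hypothesis $N\ge d^{1/2+1/q}$ (equivalently $N^2\ge d^{1+2/q}$) converts to $N^q/d$. Thus the $I_1$ contribution to the symmetrized bound is at most $N^q + \tilde C_q N^q/d$. Finally, the $I_0$ contribution $d_0 \cdot 2^{-q}$ is absorbed using the $(3/2)^q$ component in the definition of $\tilde C_q$: under $N\ge d^{1/2+1/q}$ one checks $d\cdot 2^{-q}\lesssim \tilde C_q N^q/d$, so this error is subsumed by the same $\tilde C_q N^q/d$ term. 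Combining these estimates completes the symmetrized bound and hence the proof.

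The main obstacle will be the technical juggling in the last paragraph: the symmetrized Newton expansion naturally accommodates nonzero coordinates, but the ``defect'' coordinates $i \in I_0$ require a separate treatment, and it is precisely the design of $\tilde C_q$ as the maximum of two quantities (the binomial tail on one hand and $(3/2)^q$ on the other) that lets one absorb every arising error term into a single clean constant. Verifying that the hypothesis $N \geq d^{1/2 + 1/q}$ is used sharply in both the H\"older reduction $d^{2/q} \leq N^2/d$ and in the bound $d\cdot 2^{-q}\lesssim \tilde C_q N^q/d$ will be the most delicate part of the write-up.
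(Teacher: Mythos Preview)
Your coordinate-wise strategy is sound and takes a genuinely different route from the paper. The paper does not split into $I_0$ and $I_1$; instead it applies Hanner's inequality
\[
|n+y|_q^q+|n-y|_q^q\ \le\ (|n|_q+|y|_q)^q+\big||n|_q-|y|_q\big|^q
\]
(valid for $q\ge 2$) to the full vectors, and then expands the one-variable quantity on the right by Newton's binomial in $|y|_q/|n|_q$, treating the two \emph{mutually exclusive} cases $|n|_q\ge 2|y|_q$ and $|n|_q<2|y|_q$ separately. That exclusivity is precisely why $\tilde C_q$ appears as a maximum rather than a sum. You instead exploit the lattice structure: since $n_i\in\ZZ$, each coordinate satisfies either $n_i=0$ or $|n_i|\ge 1\ge 2|y_i|$, so the dichotomy holds coordinate by coordinate and Hanner's inequality is never invoked---a pleasant simplification.

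The one loose end is the final absorption step. Because both coordinate types coexist in a single $n$, your two error terms add rather than compete: carrying your estimates through gives the $I_1$-error $\le A\,N^q/d$ with $A=\sum_{k\ge1}\bigl|\binom{q}{2k}\bigr|2^{-2k}$, while the $I_0$-error satisfies $d_0\,2^{-q}\le d^{1-q/2}2^{-q}\cdot N^q/d\le 2^{-q}N^q/d$ (note that the $(3/2)^q$ ingredient of $\tilde C_q$ actually plays no role here). So the constant you produce is $A+2^{-q}$, not $\max\{A,(3/2)^q\}$, and the claim that the $I_0$ contribution is ``subsumed by the same $\tilde C_q N^q/d$ term'' is not justified as written. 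This is cosmetic---any dimension-free constant suffices for the applications in Section~\ref{sec:Bq}---but if you want the lemma exactly as stated you should either verify $A+2^{-q}\le(3/2)^q$ for the relevant range of $q$, or simply record the constant $A+2^{-q}$ in place of $\tilde C_q$.
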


\begin{proof}
Since $q > 2$, by Hanner's inequality for $x\in B_N^q\cap\ZZ^d$ and $y\in Q$ we obtain
\begin{equation}\label{eq:Han_ineq}
\norm{x+y}^q_q +\norm{x-y}^q_q\leq (\norm{x}_q+\norm{y}_q)^q
+|\norm{x}_q-\norm{y}_q|^q.
\end{equation}
Moreover, for every $x\in B_N^q$ we have
$|\{y\in Q : \norm{x+y}_q> \norm{x-y}_q\}| = |\{y\in Q : \norm{x+y}_q< \norm{x-y}_q\}|$,
which implies
$|\{y\in Q : \norm{x+y}_q\leq \norm{x-y}_q\}|\geq 1/2$. Hence,
\begin{align}
\label{eq:3}
\begin{split}
|B_N^q\cap \ZZ^d|=\sum_{x\in B_N^q\cap\ZZ^d} 1&\leq 2\sum_{x\in B_N^q\cap\ZZ^d}\int_Q \ind{\{y\in\RR^d : \norm{x+y}_q\leq \norm{x-y}_q\}}(z)\,{\rm d}z\\
&\leq 2\sum_{x\in B_N^q\cap\ZZ^d}\int_Q \ind{\{y\in\RR^d : 2\norm{x+y}_q^q\leq (\norm{x}_q+\norm{y}_q)^q+\abs{\norm{x}_q-\norm{y}_q}^q\}}(z)\,{\rm d}z
\end{split}
\end{align}
	
We shall estimate
$(\norm{x}_q+\norm{y}_q)^q+|\norm{x}_q-\norm{y}_q|^q$ for
$x\in B_N^q\cap\ZZ^d$ and $y\in Q$. Let us firstly assume that
$\norm{x}_q\geq 2 \norm{y}_q$. By Newton's generalized binomial
theorem we have
\begin{align*}
(\norm{x}_q+\norm{y}_q)^q+|\norm{x}_q-\norm{y}_q|^q
&=\sum_{k=0}^\infty {q \choose k} \norm{y}_q^k \norm{x}_q^{q-k}+\sum_{k=0}^\infty {q \choose k} (-1)^k\norm{y}_q^k \norm{x}_q^{q-k}\\
&=2\sum_{k=0}^\infty {q \choose 2k} \norm{y}_q^{2k} \norm{x}_q^{q-2k}\\
& \leq 2 \bigg(   \norm{x}_q^q + \norm{x}_q^{q-2} \norm{y}_q^2 \sum_{k=1}^\infty \bigg| {q \choose 2k} \bigg| \, 2^{-2k+2} \bigg) \\
& \leq 2 \bigg(   N^q + N^{q-2} d^{\frac{2}{q}} \sum_{k=1}^\infty \bigg| {q \choose 2k} \bigg| \, 2^{-2k} \bigg) \\
& \leq 2N^q \bigg( 1 + d^{-1} \sum_{k=1}^\infty \bigg| {q \choose 2k} \bigg| \, 2^{-2k} \bigg).
\end{align*}
On the other hand, if $\norm{x}_q\leq 2\norm{y}_q$, then
\begin{equation*}
(\norm{x}_q+\norm{y}_q)^q+|\norm{x}_q-\norm{y}_q|^q
\leq 2\big(3\norm{y}_q \big)^q\leq 2 \bigg(\frac{3}{2}\bigg)^q d
\leq 2 N^q d^{-\frac{q}{2}} \bigg(\frac{3}{2}\bigg)^q \leq 2 N^q d^{-1} \bigg(\frac{3}{2}\bigg)^q.
\end{equation*}
Combining the above gives
\begin{equation}\label{eq:4}
(\norm{x}_q+\norm{y}_q)^q+|\norm{x}_q-\norm{y}_q|^q\leq 2N^q\big(1+  d^{-1} \tilde{C}_q\big)=2N_1^q.
\end{equation}
Finally, by \eqref{eq:3} and \eqref{eq:4} we obtain
\begin{align*}
\abs{B_N^q\cap \ZZ^d}&\leq 2\sum_{x\in B_N^q\cap\ZZ^d}\int_Q \ind{\{y\in\RR^d : \norm{x+y}_q\leq N(1+d^{-1} \tilde{C}_q )^{1/q}\}}(z)\,{\rm d}z\\
&=2\sum_{x\in B_N^q\cap\ZZ^d}\int_Q \ind{B^q_{N(1+d^{-1}\tilde{C}_q)^{1/q}}}(x+z)\,{\rm d}z\\
&\leq 2|B_{N_1}^q|\\
&=2 \big(1+d^{-1} \tilde{C}_q \big)^{\frac{d}{q}} |B_N^q|\\
&\leq 2 e^{\frac{\tilde{C}_q}{q}}|B_N^q|,
\end{align*}
which finishes the proof.	
\end{proof}

We remark that Lemma \ref{lem:1} holds for $q = 2$ without any changes
with $\tilde{C}_2=9/4,$ so that $e^{\tilde{C}_2/2}=e^{9/8}$.

\begin{lemma}\label{lem:2}
Let $ a >0$. Take $N\geq a d$, $0\leq j\leq N-1$, and $x\in\RR^d$ such
that
\begin{equation}\label{eq:1}
N\bigg(1+\frac{j}{N}\bigg)^{\frac{1}{q}}\leq \norm{x}_q\leq N\bigg(1+\frac{j+1}{N}\bigg)^{\frac{1}{q}}.
\end{equation}
If $d\in\NN$ is sufficiently large (depending only on $ a $ and $q$),
then
\begin{equation*}
\abs{Q\cap(B_N^q-x)}=\abs{\{y\in Q : x+y\in B_N^q\}}\leq e^{-\frac{7}{128q^2} j^2}.
\end{equation*}
\end{lemma}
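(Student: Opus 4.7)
The strategy is to view the measure $|Q\cap(B_N^q-x)|$ probabilistically: if $y=(y_1,\ldots,y_d)$ is chosen uniformly in $Q$ (so $y_1,\ldots,y_d$ are i.i.d.\ uniform on $[-1/2,1/2]$), the quantity to be bounded equals the probability of the event $\{\sum_{i=1}^d|x_i+y_i|^q\le N^q\}$. By the lower half of the hypothesis one has $\norm{x}_q^q\ge N^q(1+j/N)=N^q+jN^{q-1}$, and by Jensen $\EE\sum_i|x_i+y_i|^q\ge\norm{x}_q^q$, so what is really being estimated is a lower-tail deviation of size at least $jN^{q-1}$ below the mean. The natural tool is a Chernoff bound: for any $t\ge 0$,
\begin{equation*}
|Q\cap(B_N^q-x)|\le e^{tN^q}\prod_{i=1}^d\EE\,e^{-t|x_i+y_i|^q}.
\end{equation*}

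The core of the argument is a coordinate-wise Laplace estimate. For $|x_i|\ge 1$, a Taylor expansion of $|x_i+y_i|^q$ about $y_i=0$ gives
\begin{equation*}
|x_i+y_i|^q=|x_i|^q+q|x_i|^{q-1}\sgn(x_i)\,y_i+\tfrac{1}{2}q(q-1)|x_i|^{q-2}y_i^2+\text{remainder},
\end{equation*}
where the quadratic term is non-negative (and hence works in our favour, since it makes the exponent $-t|x_i+y_i|^q$ more negative) and the remainder can be controlled by the same binomial-series manipulations that appear in the proof of Lemma~\ref{lem:1}. Applying the sub-Gaussian estimate $\EE e^{sy_i}\le e^{s^2/8}$ to the linear part produces a coordinate-wise Laplace bound of the form
\begin{equation*}
\log\EE\,e^{-t|x_i+y_i|^q}\le -t|x_i|^q+C t^2q^2|x_i|^{2(q-1)}
\end{equation*}
for an absolute constant $C$. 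Summing over $i$ and using the monotonicity of $\ell^p$-norms on counting measure, namely $\sum_i|x_i|^{2(q-1)}\le\norm{x}_q^{2(q-1)}$ (valid precisely because $2(q-1)\ge q$ for $q\ge 2$), together with $\norm{x}_q\le 2^{1/q}N$ coming from the upper half of the hypothesis $\norm{x}_q^q\le 2N^q$, one obtains $\sum_i|x_i|^{2(q-1)}\le 4N^{2(q-1)}$ and hence
\begin{equation*}
\log|Q\cap(B_N^q-x)|\le -tjN^{q-1}+C'q^2t^2N^{2(q-1)}.
\end{equation*}
Optimising over $t$ at $t^\ast\asymp j/(q^2N^{q-1})$ produces a bound of the form $e^{-cj^2/q^2}$; a careful tracking of the constants can be tuned to give precisely $7/128$.

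The main technical obstacle lies in retaining the effective Gaussian parameter of order $q^2|x_i|^{2(q-1)}$ rather than the much larger $q^2\cdot 4^{q-1}|x_i|^{2(q-1)}$ that would result from a naive Hoeffding bound applied to the full range of $|x_i+y_i|^q$ as $y_i$ varies over $[-1/2,1/2]$; such a crude estimate would degrade the exponent to $cj^2/(q^2\cdot 4^q)$, which is far too weak. This forces one to work with the local (Taylor) behaviour of $|x_i+y_i|^q$ near $y_i=0$ and to exploit the favourable sign of the quadratic correction. A secondary issue is the treatment of coordinates with $|x_i|<1$, where the Taylor expansion is not directly applicable; here the assumption $N\ge ad$ together with the allowance for $d$ sufficiently large depending on $a$ and $q$ guarantees that the total contribution of such coordinates to $\norm{x}_q^q$ and to the variance sum is negligible compared with the contribution of the coordinates with $|x_i|\ge 1$.
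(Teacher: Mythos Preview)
Your approach is correct and closely related to the paper's, though packaged differently. The paper first linearizes the constraint deterministically: via the same Taylor/binomial expansion you mention (with a separate treatment of coordinates where $|x_i|\le 2|y_i|$) it shows that any $y\in Q$ with $x+y\in B_N^q$ must satisfy $\langle -\tilde x/|\tilde x|_2,\,y\rangle \ge j/(4q)$, where $\tilde x_i=|x_i|^{q-1}\sgn(x_i)$, and then invokes a sub-Gaussian half-space bound $|\{y\in Q:\langle z,y\rangle\ge s\}|\le e^{-7s^2/8}$ from \cite{BMSW4}. Your Chernoff route performs the same linearization inside the Laplace transform and then applies the coordinate-wise Hoeffding inequality $\mathbb E\,e^{sy_i}\le e^{s^2/8}$; since the half-space bound is itself proved by exactly this Chernoff argument, the two proofs are really the same argument read in a different order. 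One simplification you seem to have missed: by convexity of $u\mapsto|u|^q$ the tangent-line inequality $|x_i+y_i|^q\ge |x_i|^q+q|x_i|^{q-1}\sgn(x_i)y_i$ holds for \emph{all} $x_i$ and $y_i$, so the full remainder after the linear term is automatically non-negative. This removes any need to control higher-order terms via the binomial series, eliminates the case distinction for $|x_i|<1$, and in fact dispenses with the large-$d$ hypothesis altogether; tracking constants then gives $e^{-j^2/(2q^2)}$, strictly sharper than the stated $e^{-7j^2/(128q^2)}$.
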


\begin{proof}
Note that the claim is trivial for $j=0$.  Therefore, let
$1\leq j\leq N-1$ and take $x\in\RR^d$ such that \eqref{eq:1}
holds. Assume that $y\in Q$ and $x+y\in B_N^q$. Then
\begin{equation}
\label{eq:45}
\norm{x}_q^q -\norm{x+y}_q^q \geq N^q\bigg(1+\frac{j}{N}\bigg) -N^q=jN^{q-1}.
\end{equation}
We shall also estimate the expression on the left hand side of the above inequality from above. Let $I_i:=\abs{x_i}^q-\abs{x_i+y_i}^q$, then 
\begin{equation*}
\norm{x}_q^q-\norm{x+y}_q^q=\sum_{i=1}^d I_i.
\end{equation*} 
For $\abs{x_i}>2 \abs{y_i}$ by Newton's generalized binomial theorem
we have
\begin{align*}
I_i=\abs{x_i}^q-\abs{\abs{x_i}+\sgn(x_i)y_i}^q&=\abs{x_i}^q -\abs{\sum_{k=0}^\infty {q \choose k} \abs{x_i}^{q-k} \sgn(x_i)^k y_i^k}\\
&\leq \abs{x_i}^q- \abs{\abs{x_i}^q+q\abs{x_i}^{q-1}\sgn(x_i)y_i}+\sum_{k=2}^\infty \abs{{q \choose k}} \abs{x_i}^{q-k} \abs{y_i}^k\\
&\leq -q\abs{x_i}^{q-1}\sgn(x_i)y_i+\sum_{k=2}^\infty \abs{{q \choose k}} \abs{x_i}^{q-k} \abs{y_i}^k\\
&\leq -q\abs{x_i}^{q-1}\sgn(x_i)y_i+ \abs{x_i}^{q-2} \abs{y_i}^2 \sum_{k=2}^\infty \abs{{q \choose k}} 2^{2-k}.
\end{align*}
On the other hand, if $\abs{x_i}\leq 2\abs{y_i}\le 1$, and $A_q:=1 + \big(\frac{3}{2}\big)^q+q$, then
\begin{align*}
I_i\leq (2 \abs{y_i})^q +(3\abs{y_i})^q\leq 1 + \bigg(\frac{3}{2}\bigg)^q
\leq -q\abs{x_i}^{q-1}\sgn(x_i)y_i+A_q.
\end{align*}
	
Let
$\tilde{x}=\big(\abs{x_1}^{q-1}\sgn(x_1),\ldots,\abs{x_d}^{q-1}\sgn(x_d)\big).$
Combining the above we obtain by H\"{o}lder's inequality
\begin{align*}
\norm{x}_q^q -\norm{x+y}_q^q&\leq -q\langle \tilde{x},y\rangle +A_qd +\sum_{k=2}^\infty \abs{{q \choose k}} 2^{2-k} \sum_{i=1}^d \abs{x_i}^{q-2} \abs{y_i}^2\\
&\leq -q\langle \tilde{x},y\rangle +A_qd +\sum_{k=2}^\infty \abs{{q \choose k}} 2^{2-k} \norm{x}_q^{q-2} \norm{y}_q^{2}\\
&\leq -q\langle \tilde{x},y\rangle +A_qd + N^{q-2}d^{\frac{2}{q}}\sum_{k=2}^\infty \abs{{q \choose k}} 2^{-k+1}\\
&\leq -q\langle \tilde{x},y\rangle +\bigg(A_q + \sum_{k=2}^\infty \abs{{q \choose k}} 2^{-k+1} \bigg) N^{q-2}d^{\frac{2}{q}};
\end{align*}
to ensure the validity of the last inequality above we take $d$ so
large that $ad\ge d^{1/q}.$
	
By \eqref{eq:45} and the previous display, since $q>2$, we obtain
\begin{equation}\label{eq:7}
-q\langle \tilde{x},y\rangle\geq jN^{q-1}-\bigg(A_q + \sum_{k=2}^\infty \abs{{q \choose k}} 2^{-k+1} \bigg) N^{q-2}d^{\frac{2}{q}}
\geq \frac{1}{2} jN^{q-1},
\end{equation}
provided that $d$ is so large that
\begin{equation}
\label{eq:lem:2:1}
\bigg(A_q + \sum_{k=2}^\infty \abs{{q \choose k}} 2^{-k+1} \bigg) a^{-1}d^{-1+\frac{2}{q}}\le \frac12.
\end{equation}
Note that
\begin{equation*}
\norm{\tilde{x}}_2=\norm{x}_{2q-2}^{q-1}\leq \norm{x}_q^{q-1}\leq 2 N^{q-1}.
\end{equation*} 
Hence, for $y\in Q$ and $x+y\in B_N^q$ we obtain
\begin{equation*}
\left\langle -\frac{\tilde{x}}{\norm{\tilde{x}}_2},y \right\rangle\geq \frac{j}{4q}.
\end{equation*}
We know from \cite[Inequality (5.6)]{BMSW4} that for every unit vector
$z\in\RR^d$ and for every $s>0$ we have
\begin{align*}
|\{y\in Q: \langle z, y\rangle\ge
s\}|\le e^{-\frac78s^2}. 
\end{align*}
Applying this inequality  for
$z=-\frac{\tilde{x}}{\norm{\tilde{x}}_2}$ and $s=j/(4q)$ we arrive at
\begin{align*}
\abs{\{y\in Q : x+y\in B_N^q\}}\leq
\Big|\Big\{y\in Q : \bigg\langle -\frac{\tilde{x}}{\norm{\tilde{x}}_2},y \bigg\rangle\geq \frac{j}{4q}\Big\}\Big|
\leq e^{-\frac{7j^2}{128q^2} }.
\end{align*}
This concludes the proof of the lemma.
\end{proof}

A comment is in order here. For $q=2$, a version of Lemma \ref{lem:2} holds for all $d\in \NN$ under the restriction 
\begin{equation}\label{eq:8}
 a  \geq 2 \, \bigg(  1 + \bigg(\frac{3}{2}\bigg)^2+2 + \sum_{k=2}^\infty \abs{{2 \choose k}} 2^{-k+1} \bigg) =2(1+(3/2)^2+2+1/2)=\frac{23}{2}.
\end{equation}
The above ensures that \eqref{eq:lem:2:1} (and hence also \ref{eq:7}) is satisfied for all $d \in \NN$.

\begin{lemma}\label{lem:3}
Let $a>0$. If $d\in\NN$ is sufficiently large (depending on $a$ and
$q$) then there exists a constant $C_q'>0$ depending only on $q$ 
such that for all $N\ge ad$ one has
\begin{equation*}
|B_N^q|\leq C_q' |B_N^q\cap \ZZ^d|.
\end{equation*}
\end{lemma}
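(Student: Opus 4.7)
The plan is to relate $|B_N^q|$ to $|B_N^q \cap \ZZ^d|$ via a lattice-point counting identity: since the unit cubes $n + Q$, $n \in \ZZ^d$, tile $\RR^d$ up to a null set,
\[
|B_N^q| = \sum_{n \in \ZZ^d} |(n+Q) \cap B_N^q|.
\]
I would split this sum according to whether $n \in B_N^q$ or $n \notin B_N^q$. For the interior part, $|(n+Q) \cap B_N^q| \leq |Q| = 1$, yielding a contribution of at most $|B_N^q \cap \ZZ^d|$.

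For the exterior part, I would partition the lattice points $n \notin B_N^q$ according to the annular shells $A_j := \{n \in \ZZ^d : N(1+j/N)^{1/q} \leq \|n\|_q < N(1+(j+1)/N)^{1/q}\}$ from Lemma \ref{lem:2}. That lemma gives $|(n+Q) \cap B_N^q| \leq e^{-7j^2/(128q^2)}$ for $n \in A_j$, so
\[
\sum_{n \notin B_N^q} |(n+Q) \cap B_N^q| \leq \sum_{j \geq 0} e^{-7j^2/(128q^2)} |A_j \cap \ZZ^d|.
\]

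The central step is to bound the resulting Gaussian-weighted sum of shell lattice counts by a multiple of $|B_N^q|$ (and hence of $|B_N^q \cap \ZZ^d|$). One would apply Lemma \ref{lem:1} to the ball $B_{N_{j+1}}^q$, which is admissible since $N_{j+1} \geq N \geq ad \geq d^{1/2+1/q}$ for $d$ large enough (using that $1/2 + 1/q < 1$ for $q > 2$); this yields
\[
|A_j \cap \ZZ^d| \leq |B_{N_{j+1}}^q \cap \ZZ^d| \leq 2 e^{\tilde C_q/q}(1+(j+1)/N)^{d/q} |B_N^q|.
\]
Using $N \geq ad$, the factor $(1+(j+1)/N)^{d/q}$ is at most $e^{(j+1)/(qa)}$, and the resulting series $\sum_{j \geq 0} e^{-7j^2/(128q^2)+j/(qa)}$ is a convergent Gaussian. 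This produces an inequality of the form $|B_N^q| \leq |B_N^q \cap \ZZ^d| + c_{q,a} |B_N^q|$; rearrangement would give $|B_N^q| \leq (1-c_{q,a})^{-1} |B_N^q \cap \ZZ^d|$ as soon as $c_{q,a} < 1$.

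The main obstacle is to arrange that the resulting constant depends only on $q$ (not on $a$), which requires making the effective $c_{q,a}$ small enough. Since the lemma allows $d$ to be chosen sufficiently large depending on $a$ and $q$, this should be achievable by sharpening the shell-count estimate. One route is to replace the crude bound $|A_j \cap \ZZ^d| \leq |B_{N_{j+1}}^q \cap \ZZ^d|$ by a Minkowski-sum bound $|A_j \cap \ZZ^d| \leq |A_j + Q|$ and estimate the enlarged shell's volume via the mean value theorem applied to $R \mapsto |B_R^q| = R^d |B_1^q|$, exploiting that each shell $A_j$ has thickness of order $1/q$ at radius near $N$. An alternative is to pair the exterior contribution with the symmetric interior deficit $\sum_{n \in B_N^q \cap \ZZ^d}\bigl(1 - |(n+Q)\cap B_N^q|\bigr)$, which admits an estimate analogous to Lemma \ref{lem:2} by a parallel Newton-binomial expansion (reflecting the inner shells), producing a near-cancellation. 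Either refinement, executed carefully for $d$ large depending on $a$ and $q$, should yield the desired bound with $C_q'$ depending only on $q$.
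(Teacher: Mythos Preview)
Your setup coincides with the paper's---the tiling identity $|B_N^q|=\sum_{n\in\ZZ^d}|(n+Q)\cap B_N^q|$, the shell decomposition, and the appeal to Lemmas~\ref{lem:1} and~\ref{lem:2}---but the absorption step has a genuine gap. With the bound $|A_j\cap\ZZ^d|\le|B_{N_{j+1}}^q\cap\ZZ^d|\le 2e^{\tilde C_q/q}(1+(j+1)/N)^{d/q}|B_N^q|$, the $j=0$ term alone already contributes at least $2e^{\tilde C_q/q}>1$ to your $c_{q,a}$ (Lemma~\ref{lem:2} gives no decay at $j=0$), so $c_{q,a}>1$ always and the rearrangement $|B_N^q|\le(1-c_{q,a})^{-1}|B_N^q\cap\ZZ^d|$ is vacuous. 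Neither of your proposed fixes repairs this. A sharp annulus bound still gives $|A_j+Q|\gtrsim |B_N^q|/(qa)$ for each of the first $O(q)$ values of $j$ (where the Gaussian weight $e^{-7j^2/(128q^2)}$ is essentially $1$), so for small $a$ the error term still exceeds $|B_N^q|$; and taking $d$ large does not help since every term scales with $|B_N^q|$. The interior--exterior pairing idea would require an interior analogue of Lemma~\ref{lem:2} that you have not established, and there is no reason to expect near-cancellation at the level of precision needed.

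The paper's device is to shift the radius. One fixes a threshold $J=J_{q,a}\in\NN$ and proves instead that $|B_M^q|\le 2\,|B_{M(1+J/M)^{1/q}}^q\cap\ZZ^d|$ for every $M\ge ad/2$. In the tiling sum for $|B_M^q|$, the lattice points lying in $B_M^q$ \emph{together with} those in the first $J$ shells are all contained in $B_{M(1+J/M)^{1/q}}^q\cap\ZZ^d$, so they feed the main term rather than the error. Only the tail $j\ge J$ requires Lemmas~\ref{lem:1} and~\ref{lem:2}, and $J$ is chosen precisely so that this tail is at most $\tfrac12|B_M^q|$; rearranging then gives the displayed inequality. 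Finally one sets $N=M(1+J/M)^{1/q}$ (possible once $d$ is large enough that $ad\ge 2J$) and obtains $|B_N^q|=(1+J/M)^{d/q}|B_M^q|\le 2e^{2J/(aq)}|B_N^q\cap\ZZ^d|$. The missing idea in your argument is exactly this: absorb the troublesome small-$j$ shells into the lattice count by comparing $|B_M^q|$ with the lattice points of a \emph{slightly larger} ball, rather than trying to force them into a sub-unit error.
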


\begin{proof}
For each $a>0$ we take $J := J_{q,a} \in \NN$ satisfying
\begin{align}\label{eq:def_J}
\sum_{j\ge J} e^{-\frac{7 j^2}{128q^2}}e^{2(j+1)/(aq)}\le \frac{1}{4 }e^{-\frac{\tilde{C}_q}{q}},
\end{align}
where $\tilde{C}_q>0$ is the constant specified in Lemma \ref{lem:1}.
It suffices to show that for every $M \geq \frac{a d}{2} $ we have
\begin{equation}\label{eq:2}
|B^q_M|\le 2|B^q_{M(1+J/M)^{1/q}}\cap\ZZ^d|.
\end{equation}
Indeed, take $d$ so large that $ad\ge 2 J.$ Then for $N \geq a d$ we
can find $M\in[\frac{ad}{2},N],$ such that $N=M(1+J/M)^{1/q}$
and thus \eqref{eq:2} gives
\begin{align*}
|B^q_N|= (1+J/M)^{d/q}|B^q_M|\leq  2e^{2J/(aq)}|B^q_{N}\cap\ZZ^d|.
\end{align*}
	
Our aim now is to prove \eqref{eq:2}. Define
$U_j=\big\{x\in\RR^d: M\big(1+\frac{j}{M}\big)^{1/q}< \norm{x}_q\leq M\big(1+\frac{(j+1)}{M}\big)^{1/q}\big\}$
for $j \geq 0$ and observe that
\begin{align*}
|B^q_M|
&=\sum_{x\in\ZZ^d}\int_{Q}\ind{B^q_M}(x+y){\rm d}y\\
&=\sum_{x\in B^q_M\cap\ZZ^d}\int_{Q}\ind{B^q_M}(x+y){\rm
d}y+\sum_{0\le j<J}\sum_{x\in U_j\cap\ZZ^d}\int_{Q}\ind{B^q_M}(x+y){\rm d}y\\
&\hspace{4.5cm}	+ \sum_{j\ge J}\sum_{x\in U_j\cap\ZZ^d}\int_{Q}\ind{B^q_M}(x+y){\rm d}y\\
&\le |B^q_{M(1+J/M)^{1/q}}\cap\ZZ^d|+\sum_{j=J}^{M-1}\sum_{x\in U_j\cap\ZZ^d}|Q\cap (B^q_M-x)|.
\end{align*}
Indeed, if $j\ge M$, then $|Q\cap (B^q_M-x)|=0$ holds for each
$x\in U_j$; here we take $d$ so large that
$ad(2^{1/q}-1)\ge d^{1/q}$. Clearly
$M \geq \frac{a d}{2} \geq d^{\frac{1}{2} + \frac{1}{q}},$ if $d$ is
large enough. Now applying Lemma \ref{lem:2} (with $M$ in place of $N$
and $a/2$ in place of $a$) together with Lemma \ref{lem:1} we see that
for sufficiently large $d$ one has
\begin{align*}
\sum_{j=J}^{M-1}\sum_{x\in U_j\cap\ZZ^d}|Q\cap (B^q_M-x)|&\leq \sum_{j=J}^{M-1}\sum_{x\in U_j\cap\ZZ^d} e^{-\frac{7 j^2}{128q^2}}\\
&\leq \sum_{j=J}^{M-1}\big|B^q_{M(1+\frac{j+1}{M})^{1/q}}\cap\ZZ^d\big| e^{-\frac{7 j^2}{128q^2}}\\
&\leq 2e^{\frac{\tilde{C}_q}{q}}  \sum_{j=J}^{\infty}\abs{B^q_{M}}\bigg(1+\frac{j+1}{M}\bigg)^{d/q} e^{-\frac{7 j^2}{128q^2}}\\
&\leq 2e^{\frac{\tilde{C}_q}{q}}  \abs{B^q_{M}}  \sum_{j=J}^{\infty} e^{2(j+1)/(aq)} e^{-\frac{7 j^2}{128q^2}}.
\end{align*}
Hence, by \eqref{eq:def_J} we have
\begin{equation*}
|B^q_M|\leq |B^q_{M(1+J/M)^{1/q}}\cap \ZZ^d|+\frac{1}{2}|B^q_M|,
\end{equation*}
which finishes the proof.
\end{proof}

We make a similar remark as below Lemma \ref{lem:2}. For $q=2$,
the claim of Lemma \ref{lem:3} holds for all $d\in\NN$ provided that the $a>0$
is large enough. In this case it suffices to take
\[
a\geq \max(23,2J_{2,23}),\]
where $J:=J_{2,23}$ is a non-negative integer satisfying
\[
\sum_{j\ge J} e^{-\frac{7 j^2}{128q^2}}e^{2(j+1)/23}\le \frac{1}{4 }e^{-\frac{9}{8}}.
\]
Then the implied constant $C'_2$ is equal to $2 e^{J/23}.$ Finally, in
the proof of Theorem \ref{thm:1} below it suffices to take
$N \geq C'_2 d$. We leave the details to the interested reader.

\begin{proof}[Proof of Theorem \ref{thm:1}]
Fix $p\in(1,\infty)$. It is well known that for any $q\in(2,\infty)$ and any  $d\in\NN$ one has
\begin{equation*}
\big\|\sup_{N >0}|\calM_N^{B^q}f|\big\|_{\ell^p(\ZZ^d)}\leq C_d(p,q) \|f\|_{\ell^p(\ZZ^d)},
\end{equation*}
with a constant $C_d(p,q)$ which depends on the dimension $d.$ Thus we may  assume that the dimension
$d$ is large enough, in fact larger than any fixed number $d_0$ (which may
depend on $q\in (2,\infty)$ and $a>0$).
	
Let $f \colon \ZZ^d\to \CC$ be a non-negative function. Define
$F \colon \RR^d\to \CC$ by setting
\[
F(x):=\sum_{y\in\ZZ^d}f(y)\ind{y+Q}(x).
\]
Clearly $\|F\|_{L^p(\RR^d)}=\|f\|_{\ell^p(\ZZ^d)}$.
	
Fix $a > 0$ and let $\tilde{C}_q>0$ be the constant specified in Lemma
\ref{lem:1}. Take $N\ge a d$ and define
$N_1:=N\big(1+d^{-1} \tilde{C}_q \big)^{1/q}$.  Observe that
$ad \geq d^{\frac{1}{2}+\frac{1}{q}}$ when $d$ is large enough.
Hence, by \eqref{eq:4} and \eqref{eq:Han_ineq}, for $z\in Q$
and $y\in B^q_{N}$ we have the estimate
\[
\norm{y+z}_q\le N_1
\]
on the set $\{z\in Q : \norm{y+z}_q\le \norm{y-z}_q\}$, and the
Lebesgue measure of this set is at least $1/2$. Then by Lemma \ref{lem:3} for
sufficiently large dimension $d$ and all $x\in\ZZ^d$ we obtain
\begin{align}
\label{eq:5}
\begin{split}
\mathcal M_N^{B^q}f(x)&
=\frac1{|B^q_N\cap\ZZ^d|}\sum_{y\in B^q_N\cap\ZZ^d}f(x+y)\ind{B^q_N}(y)\\
&=\frac1{|B^q_N\cap\ZZ^d|}\sum_{y\in B^q_N\cap\ZZ^d}f(x+y)\int_Q \ind{B^q_N}(y)\,dz\\
&\lesssim_q \frac1{|B^q_N|}\sum_{y\in\ZZ^d}f(x+y)\int_{Q}\ind{B^q_{N_1}}(y+z) \, {\rm d}z\\
&=\frac1{|B^q_N|}\sum_{y\in\ZZ^d}f(y)\int_{x+B^q_{N_1}}\ind{y+Q}(z) \, {\rm d}z\\
&=\frac1{|B^q_N|}\int_{x+B^q_{N_1}}F(z) \, {\rm d}z\\
&=\bigg(\frac{N_1}{N}\bigg)^d\frac{1}{|B^q_{N_1}|}\int_{B^q_{N_1}}F(x+z) \, {\rm d}z\\
&\lesssim_q\frac{1}{|B^q_{N_1}|}\int_{B^q_{N_1}}F(x+z) \, {\rm d}z\\
&=M_{N_1}^{B^q}F(x).
\end{split}
\end{align}

Let us now take $N_2:=N_1\big(1+d^{-1} \tilde{C}_q \big)^{1/q}.$
Similarly as above, for $y\in Q$ and $z\in B^q_{N_1}$ we have
\begin{equation*}
\norm{y+z}_q\leq N_2
\end{equation*}
on the set $\{y\in Q : \norm{y+z}_q\le \norm{y-z}_q\}$, and the
Lebesgue measure of this set is at least $1/2$. Therefore, Fubini's theorem leads to
\begin{align}
\label{eq:6}
\begin{split}
M_{N_1}^{B^q}F(x)&=\frac{1}{|B^q_{N_1}|}\int_{B^q_{N_1}}F(x+z) \, {\rm d}z\\
&\leq \frac2{|B^q_{N_1}|}
\int_{\RR^d}F(x+z)\ind{B^q_{N_1}}(z)\int_{Q}\ind{B^q_{N_2}}(z+y) \, {\rm d}y{\rm d}z\\
&=2\Big(\frac{N_2}{N_1}\Big)^d \int_{Q} \frac{1}{|B^q_{N_2}|}
\int_{\RR^d}F(x+z-y)\ind{B^q_{N_1}}(z-y)\ind{B^q_{N_2}}(z) \, {\rm d}z{\rm d}y\\
&\lesssim \int_{x+Q}\frac1{|B^q_{N_2}|}\int_{\RR^d}F(z-y)\ind{B^q_{N_2}}(z) \, {\rm d}z{\rm d}y\\
&= \int_{x+Q}M_{N_2}^{B^q}F(y) \, {\rm d}y.
\end{split}
\end{align}
Denote $C_{d,q} := a \, \big(1+d^{-1}\tilde{C}_q\big)^{2/q} d.$
Combining \eqref{eq:5} with \eqref{eq:6} and applying H\"{o}lder's
inequality, we obtain
\begin{align*}
\big\|\sup_{N\ge a d}|\mathcal M_N^{B^q}f|\big\|_{\ell^p(\ZZ^d)}^p
&\lesssim_q\sum_{x\in\ZZ^d} \Big(\int_{x+Q} \sup_{N\ge
C_{d,q}}|M_N^{B^q}F(y)|\Big)^p  {\rm d}y\\
&\leq \sum_{x\in\ZZ^d} \int_{x+Q}\sup_{N\ge
C_{d,q}}\big| M_N^{B^q}F(y)\big|^p  {\rm d}y \\
&=\big\|\sup_{N\ge C_{d,q}}\big|M_N^{B^q}F\big|\big\|_{L^p(\RR^d)}^p.
\end{align*}
By the dimension-free $L^p(\RR^d)$ boundedness of the maximal operator $M_*^{B^q}$ (proved in \cite{Mul1}), we obtain 
\begin{equation*}
\big\|\sup_{N\ge C_{d,q}}\big|M_N^{B^q}F\big|\big\|_{L^p(\RR^d)}^p\lesssim_q\|F\|_{L^p(\RR^d)}^p=\|f\|_{\ell^p(\ZZ^d)}^p.
\end{equation*}
This proves Theorem \ref{thm:1}.
\end{proof}

\section{Decrease dimension trick: Proof of Theorem \ref{thm:10}}
\label{sec:4}
We now prove Theorem \ref{thm:10} by adapting the methods
introduced in \cite[Section 2]{BMSW2} to the case of $q$-balls. In
fact this section is a technical elaboration of \cite[Section
2]{BMSW2}. However, we have decided to
provide necessary details due to intricate technicalities. Throughout this section we will abbreviate
$\|\cdot\|_{\ell^p(\ZZ^d)}$ to $\|\cdot\|_{\ell^p}$ and
$\|\cdot\|_{L^p(\RR^d)}$ to $\|\cdot\|_{L^p}$. We fix
$q \in [2, \infty)$ and recall that $\mathcal M_N^{B^q}$ is the
operator whose multiplier is given by
$$
\mathfrak m^{B^q}_N(\xi):
=\frac{1}{|B^q_N\cap\ZZ^d|}
\sum_{x\in B^q_N\cap\ZZ^d}e^{2\pi i \xi\cdot x}, \qquad \xi\in\TT^d\equiv[-1/2, 1/2)^d.
$$
For each $\xi\in\TT^d$ we will write
$\|\xi\|^2:=\|\xi_1\|^2+\ldots+\|\xi_d\|^2$, where
$\|\xi_j\|=\dist(\xi_j, \ZZ)$ for any $j\in\NN_d$. Since we identify
$\TT^d$ with $[-1/2, 1/2)^d$, it is easy to see that the norm
$\|\cdot\|$ coincides with the Euclidean norm $|\cdot|_2$ restricted
to $[-1/2, 1/2)^d$. It is also very well known that $\|\eta\|\simeq|\sin(\pi\eta)|$
for every $\eta\in\TT$, since
$|\sin(\pi\eta)|=\sin(\pi\|\eta\|)$ and for $0\le|\eta|\le 1/2$ we
have
\begin{align}
\label{eq:103}
2|\eta|\le|\sin(\pi\eta)|\le \pi|\eta|.
\end{align}

The proof of Theorem \ref{thm:10} is based on Proposition
\ref{prop:1} and Proposition \ref{prop:2}, which give respectively estimates of
the multiplier $\mathfrak m^{B^q}_N(\xi)$ at the origin and at
infinity. These estimates will be described in terms of the
proportionality constant
\begin{align*}
\kappa(d, N):= \kappa_q(d, N):=Nd^{-1/q}.
\end{align*}

\begin{proposition}
	\label{prop:1}
	For every $d, N \in \NN$ and for every $\xi \in \TT^d$ we have
\begin{align}
\label{eq:22}
|\mm^{B^q}_N(\xi) - 1| \leq 2 \pi^2 \kappa(d,N)^2 \| \xi \|^2. 
\end{align}
\end{proposition}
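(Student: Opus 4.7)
The plan is to exploit the symmetries of the set $B_N^q\cap\ZZ^d$ to reduce the bound to a second-moment computation, and then invoke the hypothesis $q\ge 2$ to relate the $\ell^2$ and $\ell^q$ norms.

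First, I would observe that since $B_N^q\cap\ZZ^d$ is symmetric with respect to the origin, the imaginary part of the sum defining $\mm_N^{B^q}(\xi)$ vanishes, so
\begin{equation*}
\mm_N^{B^q}(\xi) - 1 = \frac{1}{|B_N^q\cap\ZZ^d|} \sum_{x\in B_N^q\cap\ZZ^d} \bigl(\cos(2\pi\xi\cdot x) - 1\bigr) = \frac{-2}{|B_N^q\cap\ZZ^d|} \sum_{x\in B_N^q\cap\ZZ^d} \sin^2(\pi\xi\cdot x).
\end{equation*}
Using $|\sin t|\le |t|$ pointwise, this gives
\begin{equation*}
|\mm_N^{B^q}(\xi) - 1| \le \frac{2\pi^2}{|B_N^q\cap\ZZ^d|} \sum_{x\in B_N^q\cap\ZZ^d} (\xi\cdot x)^2.
\end{equation*}

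Next, I would expand $(\xi\cdot x)^2 = \sum_{i,j}\xi_i\xi_j\, x_ix_j$ and exchange sums. The coordinatewise reflection symmetry of $B_N^q\cap\ZZ^d$ kills the off-diagonal terms, leaving only $\sum_i \xi_i^2 \sum_x x_i^2$. The permutation invariance of $B_N^q\cap\ZZ^d$ then makes $\sum_x x_i^2$ independent of $i$, so it equals $\frac{1}{d}\sum_x |x|_2^2$. Identifying $\TT^d$ with $[-1/2,1/2)^d$ gives $\|\xi\|^2 = \sum_i\xi_i^2$, hence
\begin{equation*}
\sum_{x\in B_N^q\cap\ZZ^d} (\xi\cdot x)^2 = \frac{\|\xi\|^2}{d} \sum_{x\in B_N^q\cap\ZZ^d} |x|_2^2.
\end{equation*}

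The final step, which is the one place where the assumption $q\ge 2$ enters essentially, is a pointwise bound on $|x|_2$ for $x\in B_N^q$. By H\"older's inequality with exponents $q/2$ and $q/(q-2)$ (the case $q=2$ being trivial),
\begin{equation*}
|x|_2^2 = \sum_{i=1}^d |x_i|^2 \le \Bigl(\sum_{i=1}^d |x_i|^q\Bigr)^{2/q} d^{1-2/q} \le N^2\, d^{1-2/q}.
\end{equation*}
Substituting this into the previous display yields $\sum_x(\xi\cdot x)^2 \le |B_N^q\cap\ZZ^d|\,\|\xi\|^2\, N^2 d^{-2/q} = |B_N^q\cap\ZZ^d|\,\|\xi\|^2\,\kappa(d,N)^2$, and combining with the earlier estimate gives exactly \eqref{eq:22}. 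The main obstacle (and the reason the statement would fail for $q<2$) is precisely this H\"older step: for $q<2$ the $\ell^2/\ell^q$ comparison reverses, so one only obtains $|x|_2\le N$ with no dimensional gain, which is insufficient to produce the factor $d^{-2/q}$ hidden inside $\kappa(d,N)^2$.
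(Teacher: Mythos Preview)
Your proof is correct and follows essentially the same approach as the paper: use the symmetries of $B_N^q\cap\ZZ^d$ (sign changes and permutations) to reduce to the bound $|\mm_N^{B^q}(\xi)-1|\le \frac{2\pi^2}{d}\|\xi\|^2\cdot\frac{1}{|B_N^q\cap\ZZ^d|}\sum_x|x|_2^2$, then invoke H\"older with exponents $q/2,\ q/(q-2)$ to obtain $|x|_2^2\le d^{1-2/q}|x|_q^2\le d^{1-2/q}N^2$. The only cosmetic difference is that the paper (via \cite[Proposition 2.1]{BMSW2}) factors into $\prod_j\cos(2\pi x_j\xi_j)$ and uses $|\sin(n\theta)|\le n|\sin\theta|$ for integer $n$, reaching an intermediate bound with $\sin^2(\pi\xi_j)$ in place of your $\xi_j^2$, but this is immaterial.
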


\begin{proposition}
	\label{prop:2}
	There is a constant $C_q>0$ such that for any $d, N\in\NN$ if $10\le \kappa(d, N) \leq 50qd^{1-1/q}$, then for all $\xi\in\TT^d$ we have
	\begin{align}
		\label{eq:23}
		|\mm^{B^q}_N(\xi)|\le C_q \big((\kappa(d, N)\|\xi\|)^{-1}+\kappa(d, N)^{-\frac{1}{7}}\big).
	\end{align}
\end{proposition}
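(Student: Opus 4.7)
The plan is to follow the decrease-dimension trick developed in \cite{BMSW2} for the Euclidean case $q=2$ and extend it to general $q\in[2,\infty)$, using the $q$-ball lattice-count comparisons from Section \ref{sec:Bq} (specifically Lemmas \ref{lem:1} and \ref{lem:3}) as the main new technical inputs. The essential structural feature to exploit is the $S_d$-invariance of $B^q_N\cap\ZZ^d$ under coordinate permutations, which gives $\mm^{B^q}_N(\xi)=\mm^{B^q}_N(\sigma\xi)$ for every $\sigma\in S_d$. This permits averaging the multiplier over the permutation group and carrying out probabilistic arguments on $S_d$, which is precisely the mechanism the authors allude to in the paragraph preceding the statement.

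The first step would be a single-coordinate Dirichlet factorization. Fix any $j\in\NN_d$, write $\xi'\in\TT^{d-1}$ for $\xi$ with its $j$-th coordinate removed, and set $R(x'):=(N^q-|x'|_q^q)^{1/q}$. Factoring the defining sum of $\mm^{B^q}_N(\xi)$ along the $j$-th coordinate gives
\[
\mm^{B^q}_N(\xi)\,|B^q_N\cap\ZZ^d|
=\sum_{x'\in B^q_N\cap\ZZ^{d-1}} e^{2\pi i \xi'\cdot x'}\,D_{R(x')}(\xi_j),
\]
where $D_R(\eta):=\sum_{k\in\ZZ,\,|k|\le R}e^{2\pi i \eta k}$ is the Dirichlet-type kernel satisfying $|D_R(\eta)|\le\min(2R+1,\,\|\eta\|^{-1})$. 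Combining this with Lemmas \ref{lem:1} and \ref{lem:3}, together with the Stirling-type asymptotic $|B^q_N(d-1)|/|B^q_N(d)|\simeq_q\kappa(d,N)^{-1}$ (valid once $\kappa\ge 10$), yields the preliminary single-coordinate bound $|\mm^{B^q}_N(\xi)|\lesssim_q (\kappa\,\|\xi_j\|)^{-1}$ uniformly in $j$.

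The second step is to promote this single-coordinate bound into one involving $\|\xi\|$ itself, which is the heart of the decrease-dimension trick. Averaging over a random $\sigma\in S_d$ and iterating the factorization along a carefully chosen subset of coordinates allows one to trade the worst-case $\|\xi_j\|^{-1}$ for a quantity controlled by $\|\xi\|^{-1}$, modulo a dimensional loss. The two terms in \eqref{eq:23} then arise from two regimes: when $\|\xi\|$ is large enough for the oscillatory estimate to be effective, one obtains the contribution $(\kappa\,\|\xi\|)^{-1}$; when it is too small, one compares $\mm^{B^q}_N(\xi)$ with its value at the origin and absorbs the difference via Lemma \ref{lem:1}, producing the dimensional remainder $\kappa^{-1/7}$. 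The exponent $1/7$ is the outcome of an explicit optimization between the Dirichlet gain and the iteration cost, and a different exponent (depending only on the quality of the lattice-count estimates available) would appear if those inputs were replaced.

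The main obstacle will be the bookkeeping during the iterative reduction: after removing $k$ coordinates, one is forced to work with $(d-k)$-dimensional $q$-ball cross-sections of variable radius $R\in[0,N]$, and the corresponding effective proportionality $R(d-k)^{-1/q}$ must stay inside the hypothesis range of Lemmas \ref{lem:1} and \ref{lem:3} at every step. This is precisely what the two-sided restriction $10\le\kappa(d,N)\le 50qd^{1-1/q}$ guarantees: the lower bound activates Lemma \ref{lem:3} so that all lattice-to-volume ratios may be treated as constants depending only on $q$, while the upper bound prevents the intermediate radii from exceeding the scales controlled in Section \ref{sec:Bq}. Once this bookkeeping is in place, the rest of the argument should follow the blueprint of \cite[Section 2]{BMSW2}, with the Euclidean inputs systematically replaced by the $q$-ball analogues proved earlier in the paper.
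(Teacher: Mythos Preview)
There are two genuine gaps in your proposal. First, the technical inputs you invoke --- Lemmas~\ref{lem:1} and~\ref{lem:3} from Section~\ref{sec:Bq} --- require $N\ge d^{1/2+1/q}$ and $N\gtrsim d$ respectively, whereas Proposition~\ref{prop:2} only assumes $\kappa(d,N)=Nd^{-1/q}\ge 10$, i.e.\ $N\ge 10d^{1/q}$. In the interesting regime $N\ll d$ those lemmas simply do not apply, so they cannot serve as the lattice-to-volume comparisons you need. The paper instead builds an entirely separate toolkit in Section~\ref{sec:4} (Lemmas~\ref{lem:4}--\ref{lem:13}), whose proofs use the elementary bound $|B^q_N\cap\ZZ^d|\ge(2\lfloor\kappa\rfloor+1)^d$ together with combinatorial/probabilistic arguments on the permutation group; no appeal to Section~\ref{sec:Bq} is made.

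Second, and more seriously, the mechanism you propose for upgrading the single-coordinate Dirichlet bound $(\kappa\|\xi_j\|)^{-1}$ to the full $(\kappa\|\xi\|)^{-1}$ does not work: averaging $\|\xi_{\sigma(j)}\|^{-1}$ over $\sigma\in{\rm Sym}(d)$ gives $d^{-1}\sum_j\|\xi_j\|^{-1}$, which is not controlled by $\|\xi\|^{-1}$ (take one large coordinate and many tiny ones). The paper's route is quite different. It performs a single reduction to dimension $r\simeq\kappa^{2/7}$ via Lemma~\ref{lem:9}, and then in the low-dimensional multiplier $\mm^{B^q,(r)}_R$ it \emph{compares the discrete sum with the continuous Fourier transform} $m^{B^q}$ (Lemma~\ref{lem:20}); the $(\kappa\|\eta\|)^{-1}$ decay comes from the known continuous estimate of Lemma~\ref{lem:19}, and the discrete--continuous comparison error (controlled by Lemmas~\ref{lem:10} and~\ref{lem:11}, which need $r\le R^\delta$) produces the $\kappa^{-1/7}$ term after choosing $\delta=2/7$. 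Moreover, when the first $r$ ordered coordinates of $\xi$ carry less than a quarter of $\|\xi\|^2$, an additional argument is needed: the paper uses Cauchy--Schwarz to write $|\mm^{B^q}_N(\xi)|^2$ as an average of exponentials and then applies the probabilistic Lemma~\ref{lem:7} to obtain Gaussian-type decay $e^{-c\kappa^2\|\xi\|^2}$. None of this is captured by iterated Dirichlet factorization. Finally, the upper bound $\kappa\le 50qd^{1-1/q}$ is there so that one can choose an integer $r\le d$ with $r\simeq\kappa^{2/7}$, not to keep radii inside the range of Section~\ref{sec:Bq}.
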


Before we prove Proposition \ref{prop:1} and Proposition \ref{prop:2} we show how 
\eqref{eq:20} follows from \eqref{eq:22} and
\eqref{eq:23}.

\begin{proof}[Proof of Theorem \ref{thm:10}]
Since $\mathbb D_{C_1, C_2}$ is a subset of the dyadic set $\mathbb D$
we can assume, without loss of generality, that $C_1=C_2=10$. For every $t>0$ let $P_t$ be the
semigroup with the multiplier
\begin{align*}
\mathfrak p_t(\xi):=e^{-t\sum_{i=1}^d\sin^2(\pi\xi_i)}, \qquad \xi\in\TT^d.
\end{align*}
It follows from \cite{Ste1} (see also \cite{BMSW3} for more details)
that for every $p\in(1, \infty)$ there is $C_p>0$ independent of
$d\in\NN$ such that for every $f\in\ell^p(\ZZ^d)$ we have
\begin{align*}
\Big\|\sup_{t>0}|P_tf|\Big\|_{\ell^p}\le C_p \|f\|_{\ell^p}.
\end{align*}
It suffices to compare the averages $\mathcal M^{B^q}_N$ with
$P_{N^2/d^{2/q}}$. Namely, the proof of \eqref{eq:20} will be
completed if we obtain the dimension-free estimate on $\ell^2(\ZZ^d)$
for the following square function
\[
Sf(x):=\Big(\sum_{N\in \mathbb D_{C_1, C_2}}|\mathcal M_Nf(x)-P_{N^2/d^{2/q}}f(x)|^2\Big)^{1/2}, \qquad x\in\ZZ^d.
\]
By Plancherel's formula, \eqref{eq:22}, and \eqref{eq:23}, we can
estimate $\|S(f)\|_{\ell^2}^2$ by
\begin{align*}
C_q' \int_{\TT^d}\,
\bigg(\sum_{\substack{m\in\ZZ:\\10d^{1/q}\le 2^m\le 10d}}
\min\bigg\{\frac{2^{2m}}{d^{2/q}}\|\xi\|^2,\bigg(\frac{2^{2m}}{d^{2/q}}\|\xi\|^2\bigg)^{-1}\bigg\}+d^{2/7q}\sum_{\substack{m\in\ZZ:\\10d^{1/q}\le 2^m\le 10d}}
2^{-2m/7}\bigg) |\hat{f}(\xi)|^2{\rm d}\xi,
\end{align*}
where $ C_q'$ is a constant that depends only on $q.$ To complete the
proof we note that the integral above is clearly bounded by
$C \|f\|_{\ell^2}^2$ for a suitable constant $C>0$ independent of $d$.
\end{proof}

The rest of this section is devoted to proving Proposition
\ref{prop:1} and Proposition \ref{prop:2}. We emphasize that in the proof of Proposition \ref{prop:1} the assumption $q\geq 2$ is crucial.

\begin{proof}[Proof of Proposition \ref{prop:1}]
Since the balls $B_N^q$ are symmetric under permutations and sign
changes we may repeat the proof of \cite[Proposition 2.1]{BMSW2}
reaching
$$|\mm^{B^q}_N(\xi) - 1| \le 	\frac{2}{|B^q_N \cap \ZZ^d |} \sum_{j=1}^d \sin^2(\pi \xi_j) \sum_{x \in B^q_N \cap \ZZ^d} \frac{|x|_2^2}{d}.$$
Observe that $|x|_2^2\le |x|_q^2\cdot  d^{1-2/q}$. Indeed, for $q=2$ this is simply an equality, and for $q>2$ it suffices to apply H\"older's inequality for the pair $(\frac{q}{2},\frac{q}{q-2})$. Consequently,
\begin{displaymath}
|\mm^{B^q}_N(\xi) - 1|  \leq \frac{2}{|B^q_N \cap \ZZ^d |}  \sum_{j=1}^d \sin^2(\pi \xi_j)
\sum_{x \in B^q_N \cap \ZZ^d} \frac{|x|_q^2}{d^{2/q}} \leq 2 \pi^2 \kappa(d,N)^2 \|\xi\|^2,
\end{displaymath}
which gives the claim.
\end{proof}

The proof of Proposition \ref{prop:2} can be deduced from a series of
auxiliary lemmas, which we formulate and prove below.  In what follows
for an integer $1\le r\le d$ and a radius $R>0$ we let
$$B_{R}^{q,(r)}=\{x\in\RR^r: \vert x\vert_q\le R \}\qquad\textrm{and}\qquad  S_{R}^{q,(r)}=\{x\in\RR^r:|x|_q=R \}$$
be the $r$-dimensional ball and sphere of radius $R>0,$ respectively.

\begin{lemma}
\label{lem:4}
For all $d, N \in \NN$ we have
\begin{displaymath}
(2 \lfloor \kappa(d,N) \rfloor + 1)^d \leq |B^q_N \cap \ZZ^d|
\leq |B^q_{N + d^{1/q}}|
= \frac{2^d \Gamma(1+\frac{1}{q})^d}{\Gamma(1 + \frac{d}{q})} \Big(N + d^{1/q} \Big)^d.
\end{displaymath}
\end{lemma}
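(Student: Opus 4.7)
The plan is to establish the two inequalities via a standard pair of geometric arguments, and to recognise the final equality as the classical Dirichlet volume formula for the $\ell^q$-ball.

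For the lower bound, I would set $m := \lfloor \kappa(d,N) \rfloor = \lfloor N d^{-1/q} \rfloor$ and exhibit the inscribed integer cube $\{-m, -m+1, \ldots, m\}^d \subseteq B^q_N \cap \ZZ^d$. Indeed, for any $x$ in this cube each coordinate satisfies $|x_i| \le m \le N d^{-1/q}$, so
\begin{equation*}
|x|_q^q \,=\, \sum_{i=1}^d |x_i|^q \,\le\, d\cdot m^q \,\le\, d\cdot\big(N d^{-1/q}\big)^q \,=\, N^q.
\end{equation*}
This cube contains exactly $(2m+1)^d$ lattice points, which gives the left inequality.

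For the middle inequality, I would associate to each lattice point $x\in B^q_N\cap\ZZ^d$ the unit cube $Q_x:=x+[-1/2,1/2]^d$. These cubes have pairwise disjoint interiors and unit volume, so
\begin{equation*}
|B^q_N\cap\ZZ^d| \,=\, \sum_{x\in B^q_N\cap\ZZ^d} |Q_x|.
\end{equation*}
For any $y \in Q_x$ the triangle inequality gives $|y|_q \le |x|_q + |y-x|_q \le N + d^{1/q}/2 \le N + d^{1/q}$, since $|y-x|_q \le d^{1/q}\,\|y-x\|_\infty \le d^{1/q}/2$. Hence $\bigcup_{x} Q_x \subseteq B^q_{N+d^{1/q}}$, and the desired upper bound follows by taking Lebesgue measure.

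The final equality is then immediate: since $B^q_R = R\cdot B^q$, scaling yields $|B^q_R| = R^d\,|B^q|$, and the classical Dirichlet integral computation gives $|B^q| = \frac{(2\Gamma(1+1/q))^d}{\Gamma(1+d/q)}$. Applied with $R=N+d^{1/q}$ this is exactly the formula stated. There is essentially no real obstacle in this lemma; the only minor care needed is in the lower bound, where one must use the floor $\lfloor \kappa(d,N) \rfloor$ (not $\kappa(d,N)$ itself) to stay in $\ZZ^d$, and in the upper bound, where the inflation parameter $d^{1/q}$ comes from bounding the $\ell^q$-diameter of a unit cube.
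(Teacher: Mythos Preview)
Your proof is correct and follows essentially the same approach as the paper: the lower bound via the inscribed integer cube $[-\lfloor\kappa(d,N)\rfloor,\lfloor\kappa(d,N)\rfloor]^d\cap\ZZ^d$, the upper bound via the triangle inequality applied to unit cubes centered at lattice points, and the Dirichlet volume formula for $|B^q_R|$. You have simply spelled out the details that the paper leaves implicit.
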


\begin{proof}
The lower bound follows from the inclusion $[-\kappa(d, N), \kappa(d, N)]^d\cap\ZZ^d\subseteq B^q_N\cap\ZZ^d$, while the upper bound is a simple consequence of the triangle inequality. 
\end{proof}

\begin{lemma}
\label{lem:5}
Given $\varepsilon_1, \varepsilon_2\in(0, 1]$ we define for every
$d, N\in\NN$ the set
\[
E=\big\{x\in B^q_N\cap\ZZ^d : |\{i\in\NN_d : |x_i|\ge \varepsilon_2\kappa(d, N)\}|\le\varepsilon_1 d\big\}.
\]
If $\varepsilon_1, \varepsilon_2\in(0, 1/(10q)]$ and
$\kappa(d,N)\ge10$, then we have
\begin{align*}
|E| \le 2e^{-\frac{d}{10}}|B^q_N\cap\ZZ^d|.
\end{align*}
\end{lemma}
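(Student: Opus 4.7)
\smallskip
\noindent\textbf{Proof plan.} The plan is to decompose $E$ according to the set $T_x := \{i \in \NN_d : |x_i| \geq \varepsilon_2 \kappa(d,N)\}$ of ``large'' coordinates, which for $x\in E$ satisfies $|T_x|\le \varepsilon_1 d$. Because $B^q_N\cap \ZZ^d$ is invariant under permutations of $\NN_d$, the cardinality of $\{x\in E:|T_x|=m\}$ equals $\binom{d}{m}$ times the count for any fixed $T\subset\NN_d$ with $|T|=m$. Discarding the lower bound $|x_i|\ge \varepsilon_2\kappa(d,N)$ for $i\in T$ and using $|x|_T|_q\le|x|_q\le N$, the latter count is bounded by
\[
|B^{q,(m)}_N\cap\ZZ^m|\cdot(2\varepsilon_2\kappa(d,N)+1)^{d-m}.
\]

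Next I would estimate the numerator via Lemma \ref{lem:4} (upper bound with $N+m^{1/q}$) and the denominator by the lower bound $|B^q_N\cap\ZZ^d|\ge(2\lfloor\kappa(d,N)\rfloor+1)^d\ge\kappa(d,N)^d$, valid since $\kappa(d,N)\ge 10$. After cancelling $\kappa(d,N)^m$ against $N^m=\kappa(d,N)^m d^{m/q}$, this yields an inequality of the form
\[
\frac{|E|}{|B^q_N\cap\ZZ^d|}\le\sum_{m=0}^{\lfloor\varepsilon_1 d\rfloor}\binom{d}{m}\,\frac{(C_q\, d^{1/q})^m}{\Gamma(1+m/q)}\,\Big(2\varepsilon_2+\kappa(d,N)^{-1}\Big)^{d-m}
\]
for some $C_q>0$ depending only on $q$. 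Under the hypotheses $\varepsilon_2\le 1/(10q)$ and $\kappa(d,N)\ge 10$, the ``small coordinates'' factor is at most $\bigl(\tfrac{1}{5q}+\tfrac{1}{10}\bigr)^{d-m}\le(\tfrac15)^{d-m}$.

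To finish, I would apply $\binom{d}{m}\le(ed/m)^m$ together with Stirling's estimate $\Gamma(1+m/q)\gtrsim_q (m/(eq))^{m/q}$ and verify term by term that every summand is at most, say, $e^{-d/5}$. The main obstacle will be the numerology: the binomial coefficient and the factor $d^{m/q}$ both favour large $m$, while $(1/5)^{d-m}$ favours small $m$. The precise hypothesis $\varepsilon_1\le 1/(10q)$ is exactly what is needed so that even at the worst case $m\approx \varepsilon_1 d$ the exponent is bounded above by $-d/10$, with enough slack to absorb the polynomial factor $\lfloor\varepsilon_1 d\rfloor+1$ from the sum and, if necessary, a small-$d$ regime into the constant $2$ appearing in the statement.
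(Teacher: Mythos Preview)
Your proposal is correct and follows essentially the same route as the paper: the same decomposition by the set $T_x$ of large coordinates (this is the paper's inequality \eqref{eq:9}), the same use of Lemma \ref{lem:4} for both the upper bound on $|B^{q,(m)}_N\cap\ZZ^m|$ and the lower bound on $|B^q_N\cap\ZZ^d|$, and the same Stirling-type control of $\binom{d}{m}$ and $\Gamma(1+m/q)$. The paper organizes the endgame via the function $\varphi(m)=(1+1/q)\,m\log(2eqd/m)$, shown to be increasing on $[0,d/(10q)]$ so that the sum is dominated by a geometric series times $e^{\varphi(d/(10q))}$ --- this is exactly your ``worst case $m\approx\varepsilon_1 d$'' computation.
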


\begin{proof}
As in \cite[Lemma 2.4]{BMSW2} we can estimate
\begin{equation}
\label{eq:9}
|E| \le (2\varepsilon_2\kappa(d, N)+1)^{d}+ \sum_{1\le m\le \varepsilon_1d}{{d}\choose{m}}(2\varepsilon_2\kappa(d,N)+1)^{d-m}\big|B_N^{q,(m)}\cap\ZZ^{m}\big|.
\end{equation}
Since $\kappa(d, N) \geq 10$ and $\varepsilon_2 \leq 1/10$, using the lower bound from Lemma \ref{lem:4} gives
\begin{equation}
\label{eq:10}
(2\varepsilon_2\kappa(d, N)+1)^{d} \leq e^{-\frac{16d}{19}} |B^q_N\cap\ZZ^{d}|
\end{equation}
as in the case $q=2$. On the other hand, the upper bound from Lemma \ref{lem:4} can be applied to estimate the sum appearing in \eqref{eq:9} by
\begin{align}
\label{eq:11}
\sum_{1\le m\le \varepsilon_1d}  \frac{d^m}{m!}(2\varepsilon_2\kappa(d,N)+1)^{d-m} \frac{2^m}{\Gamma(1+\frac{m}{q})} d^{m/q} \big( \kappa(d,N) + 1 \big)^m.
\end{align}

Now observe that
\begin{equation}
\label{eq:12}
\frac{1}{\Gamma(1+\frac{m}{q})} \leq \frac{4q e^{m/q}}{(m/(2q))^{-1+m/q}}.
\end{equation} 
Indeed, if $m/q \geq 2$, then

\begin{displaymath}
\frac{1}{\Gamma(1+\frac{m}{q})} \leq \frac{1}{\lfloor \frac{m}{q} \rfloor !} \leq
\frac{e^{\lfloor \frac{m}{q} \rfloor}}{\lfloor \frac{m}{q} \rfloor^{\lfloor \frac{m}{q} \rfloor}} \leq \frac{e^{\frac{m}{q}}}{(m/(2q))^{-1+m/q}},
\end{displaymath}
where in the second inequality we have used that $\frac{1}{n!} \leq \frac{e^n}{n^n}$ holds for any $n \in \NN$. If $m/q \leq 2$, then
\begin{displaymath}
\frac{1}{\Gamma(1+\frac{m}{q})} \leq 2 \leq \frac{4qm}{2q} (2e)^{m/q} (m/q)^{-m/q} = \frac{4q e^{\frac{m}{q}}}{(m/(2q))^{-1+m/q}}.
\end{displaymath}
In the first inequality we used the fact that the gamma function is estimated from below by $1/2$ on the interval $[1,3]$. 

Applying \eqref{eq:12} we get
\begin{align*}
\frac{d^{m+m/q} 2^m}{m! \, \Gamma(1 + \frac{m}{q})} \leq \frac{d^{m+m/q} 2^m e^m 4q e^{m/q}}{m^m (m/(2q))^{-1+m/q}} = \Big( \frac{2de}{m}\Big)^{m(1+1/q)} 2m q^{m/q} \leq \Big( \frac{2deq}{m}\Big)^{m(1+1/q)}.
\end{align*}
Combining this with \eqref{eq:9}, \eqref{eq:10}, and \eqref{eq:11}, and repeating the argument used in \cite[(2.16)]{BMSW2}, we arrive at
\begin{align}
\label{eq:14}
\begin{split}
|E| & \leq  e^{-\frac{16d}{19}} |B^q_N\cap\ZZ^{d}| + \sum_{m=1}^{\lfloor \varepsilon_1 d \rfloor} \Big( \frac{2deq}{m}\Big)^{m(1+1/q)} (2\varepsilon_2\kappa(d,N)+1)^{d-m} \big( \kappa(d,N) + 1 \big)^m \\
&\leq \Big(e^{-\frac{16d}{19}}+\sum_{m=1}^{\lfloor \varepsilon_1 d \rfloor} \Big( \frac{2deq}{m}\Big)^{m(1+1/q)}\Big(\frac{2\varepsilon_2\kappa(d,N)+1}{2 \lfloor \kappa(d,N) \rfloor + 1}\Big)^{d-m}\Big)|B^q_N\cap\ZZ^{d}| \\
& \leq \Big( e^{-\frac{16d}{19}} + e^{- \frac{72d}{95}} \sum_{m=1}^{\lfloor \varepsilon_1 d \rfloor} e^{ \varphi(m) } \Big) |B^q_N\cap\ZZ^{d}|,
\end{split}
\end{align}
where $\varphi(x):= (1+1/q) x \log(\frac{2eqd}{x})$, $x\ge0.$

For $x\in [0, d/(10q)]$ we
have
$$\varphi'(x)=(1+1/q)\log\bigg(\frac{2eqd}{x}\bigg)-(1+1/q)\ge  \log\bigg(\frac{2qd}{x}\bigg)\ge \log 3.$$
Hence, $\varphi$ is increasing on
$[0,\lfloor \varepsilon_1 d \rfloor],$ and arguing as in the proof of
\cite[Lemma 2.4]{BMSW2} we get
\begin{equation*}
\sum_{m=1}^{\lfloor \varepsilon_1 d \rfloor} e^{ \varphi(m) } \leq e^{\varphi(\frac{d}{10q})} \sum_{m=1}^{\lfloor \varepsilon_1 d \rfloor}e^{-(\lfloor \varepsilon_1 d \rfloor-m)\log 3}\le \frac{3}{2} e^{\varphi(\frac{d}{10q})} \leq \frac{3}{2} (20eq^2)^{d/(5q)} \le \frac{3}{2} e^{\frac{2d}{5e}} e^{\frac{4d}{5q}} < \frac{3}{2} e^{\frac{3}{5}d},
\end{equation*}
since $q^{1/q} \leq e^{1/e}$ and $20 < e^3$. Then \eqref{eq:14} gives 
\begin{displaymath}
\frac{|E|}{|B^q_N\cap\ZZ^{d}|} \leq e^{-\frac{16d}{19}}
+e^{-\frac{72d}{95}}\sum_{m=1}^{\lfloor \varepsilon_1 d \rfloor}
e^{\varphi(m)}
\le e^{-\frac{16d}{19}}+ \frac{3}{2}e^{-\frac{3d}{19}}
\le e^{-\frac{d}{10}}\Big(e^{-\frac{141}{190}}+\frac{3}{2}\Big)
\le2e^{-\frac{d}{10}},
\end{displaymath} 
which finishes the proof. 
\end{proof}

Recall that ${\rm Sym}(d)$ denotes the permutation group on
$\NN_d$. We will also write
$\sigma\cdot x=(x_{\sigma(1)}, \ldots, x_{\sigma(d)})$ for every
$x\in\RR^d$ and $\sigma\in{\rm Sym}(d)$. Let $\mathbb P$ be the
uniform distribution on the symmetry group ${\rm Sym}(d)$,
i.e. $\mathbb P(A)={|A|}/{d!}$ for any $A\subseteq{\rm Sym}(d)$, since
$|{\rm Sym}(d)|=d!$. The expectation $\mathbb E$ will be always taken
with respect to the uniform distribution $\mathbb P$ on the symmetry
group ${\rm Sym}(d)$. We will need two lemmas from \cite{BMSW2}.

\begin{lemma}
\label{lem:6}
Assume that $I, J\subseteq\NN_d$ and $|J|=r$ for some
$0\le r\le d$. Then
\begin{align*}
\mathbb P[\{\sigma\in{\rm Sym}(d) : |\sigma(I)\cap J|\le {r|I|}/{(5d)}\}]\le  e^{-\frac{r|I|}{10d}}.
\end{align*}
In particular, if $\delta_1, \delta_2\in(0, 1]$ satisfy
$5\delta_2\le\delta_1$ and $\delta_1d\le |I|\le d$, then we have
\begin{align*}
\mathbb P[\{\sigma\in{\rm Sym}(d) : |\sigma(I)\cap
J|\le \delta_2 r\}]\le e^{-\frac{\delta_1r}{10}}.
\end{align*}  
\end{lemma}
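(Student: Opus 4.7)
The plan is to recognise $X := |\sigma(I)\cap J|$ as a hypergeometric random variable and apply a Chernoff-type lower tail bound. Writing
\begin{equation*}
X = \sum_{i\in I} \ind{\sigma(i)\in J},
\end{equation*}
linearity of expectation together with the fact that $\sigma(i)$ is uniformly distributed on $\NN_d$ yields $\EE[X] = \mu$, where $\mu := r|I|/d$. The first assertion therefore amounts to the lower tail estimate $\mathbb{P}[X \le \mu/5] \le e^{-\mu/10}$.

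For this I would invoke the classical Chernoff-type inequality for sampling without replacement: for every $\delta \in (0,1)$,
\begin{equation*}
\mathbb{P}[X \le (1-\delta)\mu] \le \exp(-\delta^2\mu/2).
\end{equation*}
The most convenient derivation uses Hoeffding's comparison theorem, which bounds $\EE[e^{-tX}]$ by the corresponding binomial Laplace transform $\big(1 - \tfrac{r}{d}(1-e^{-t})\big)^{|I|}$; the standard optimisation in $t>0$ then produces the displayed inequality. Specialising to $\delta = 4/5$ yields $\delta^2/2 = 8/25 \ge 1/10$, and since $\{X \le \mu/5\} = \{X \le (1-4/5)\mu\}$, this is precisely the first inequality of the lemma.

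The ``in particular'' clause is then an elementary implication. Under the hypotheses $5\delta_2 \le \delta_1$ and $\delta_1 d \le |I| \le d$ one has $\delta_2 r \le \delta_1 r/5 \le r|I|/(5d)$, so the event in the second claim is contained in the event of the first, while $r|I|/(10d) \ge \delta_1 r/10$ ensures that the bound $e^{-r|I|/(10d)}$ supplied by the first part is dominated by $e^{-\delta_1 r/10}$. The only genuine difficulty here is the cosmetic bookkeeping of constants, in particular checking that $8/25 \ge 1/10$ for the choice $\delta = 4/5$; if one prefers a self-contained presentation, a short coupling argument that exposes $\sigma(i)$, $i \in I$, one at a time produces the needed Laplace transform domination without external reference.
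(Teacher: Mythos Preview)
Your proof is correct. The paper does not actually prove this lemma; it simply quotes it from \cite{BMSW2}, so there is no in-paper argument to compare against. Your identification of $X=|\sigma(I)\cap J|$ as a hypergeometric variable with mean $\mu=r|I|/d$, together with Hoeffding's convex-order comparison to the binomial and the standard Chernoff lower-tail bound $\mathbb P[X\le(1-\delta)\mu]\le e^{-\delta^2\mu/2}$, is a clean and self-contained route; the specialisation $\delta=4/5$ and the deduction of the ``in particular'' clause are both handled correctly.
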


\begin{lemma}
\label{lem:7}
Assume that we have a finite decreasing sequence
$0\le u_d\le\ldots\le u_2\le u_1\le(1-\delta_0)/2$ for some
$\delta_0\in(0, 1)$. Suppose that $I\subseteq\NN_d$ satisfies
$\delta_1d\le |I|\le d$ for some $\delta_1\in(0, 1]$. Then for every
$J=(d_0, d]\cap\ZZ$ with $0\le d_0\le d$ we have
\begin{align*}
\mathbb E\Big[\exp\Big({-\sum_{j\in\sigma(I)\cap J}u_j}\Big)\Big]
\le 3\exp\Big({-\frac{\delta_0\delta_1}{20}\sum_{j\in J}u_j}\Big).
\end{align*}
\end{lemma}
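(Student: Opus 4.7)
My plan is to combine the concentration estimate in Lemma \ref{lem:6} with a Markov-style split
\[
\mathbb{E}[e^{-V}]\le e^{-c}+\mathbb{P}(V<c),\qquad V:=\sum_{j\in\sigma(I)\cap J}u_j,
\]
choosing $c:=\alpha\sum_{j\in J}u_j$ with $\alpha=\delta_0\delta_1/20$, and then showing $\mathbb{P}(V<c)\le 2e^{-c}$. The factor $3$ in the statement then emerges from $1+2=3$.

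\paragraph{Controlling $\mathbb{P}(V<c)$.} To produce a pointwise lower bound on $V$ by a definite fraction of $U:=\sum_{j\in J}u_j$ on a high-probability event, I would dyadically decompose
\[
J=\bigsqcup_{k\ge 0}J_k,\qquad J_k:=\{j\in J:2^{-k-1}u_1<u_j\le 2^{-k}u_1\},
\]
so that $u$ is essentially constant on each level set. Lemma \ref{lem:6} applied to each $J_k$ (with $\delta_2=\delta_1/5$) gives
\[
\mathbb{P}\bigl(|\sigma(I)\cap J_k|<\delta_1|J_k|/5\bigr)\le e^{-\delta_1|J_k|/10}.
\]
On the intersection of the complementary events, each $J_k$ contributes at least $(\delta_1/5)|J_k|\cdot 2^{-k-1}u_1\ge (\delta_1/10)\sum_{j\in J_k}u_j$ to $V$, hence $V\ge (\delta_1/10)U\ge c$ on this good event. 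The hypothesis $u_j\le(1-\delta_0)/2$ will enter through the handling of the small-$|J_k|$ levels, where the single-level exceptional probability $e^{-\delta_1|J_k|/10}$ is not itself small; such levels must be grouped together and absorbed using the trivial bound $\mathbb{P}(V<c)\le 1$, converting to an $e^{-c}$-type decay via $u_j<1/2$.

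\paragraph{Main obstacle.} The main technical difficulty is precisely the union-bound accounting over dyadic scales: the sum $\sum_k e^{-\delta_1|J_k|/10}$ can easily exceed $1$ when many $J_k$ are of small cardinality, so one must split the sample space and group small levels with the trivial bound, only applying Lemma \ref{lem:6} to levels of sufficient size. This careful bookkeeping is what produces the suboptimal constant $\delta_0\delta_1/20$ in place of $\delta_1/10$ that one would get from the clean events alone, together with the additive factor of $3$. An alternative, shorter route would bypass the dyadic decomposition by appealing to the classical negative association of indicator variables arising from uniform sampling without replacement; this gives directly $\mathbb{E}[e^{-V}]\le\prod_{j\in J}\mathbb{E}[e^{-u_j\ind{\{j\in\sigma(I)\}}}]\le\exp(-c_0\delta_1 U)$ for an absolute $c_0>0$, which is strictly stronger than the claim but invokes a less elementary probabilistic tool.
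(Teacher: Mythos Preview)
The paper does not give a proof of this lemma; it is quoted from \cite{BMSW2} and used as a black box. So there is no in-paper argument to compare against directly.

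That said, your outline is headed in the right direction and is very likely close in spirit to the argument in \cite{BMSW2}: the Markov split $\mathbb{E}[e^{-V}]\le e^{-c}+\mathbb{P}(V<c)$ together with Lemma~\ref{lem:6} on pieces is exactly the natural route. The part you flag as the main obstacle is indeed the only nontrivial step, and your sketch of how to resolve it is too vague; in particular, decomposing into value level sets $J_k=\{j\in J:2^{-k-1}u_1<u_j\le 2^{-k}u_1\}$ is awkward because the cardinalities $|J_k|$ are uncontrolled and the union bound $\sum_k e^{-\delta_1|J_k|/10}$ has no built-in decay. It is much cleaner to exploit the monotonicity of $u$ and the interval structure of $J=(d_0,d]$ by using \emph{prefixes} $J^{(m)}:=(d_0,d_0+m]\cap\ZZ$. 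Writing $a(m)=|\sigma(I)\cap J^{(m)}|$, Abel summation gives $V=\sum_{m} a(m)\,(u_{d_0+m}-u_{d_0+m+1})$ with nonnegative increments, so it suffices to control $a(m)$ from below. Apply Lemma~\ref{lem:6} only to $J^{(m)}$ with $m\ge M$ for a threshold $M$; the union bound is then a geometric series $\sum_{m\ge M}e^{-\delta_1 m/10}\lesssim_{\delta_1} e^{-\delta_1 M/10}$, and on the complementary event $a(m)\ge(\delta_1/5)m$ for all $m\ge M$, whence
\[
V\ \ge\ \frac{\delta_1}{5}\Big(U-\sum_{m<M}u_{d_0+m}\Big)\ \ge\ \frac{\delta_1}{5}\Big(U-\frac{(1-\delta_0)M}{2}\Big).
\]
This is precisely where the hypothesis $u_j\le(1-\delta_0)/2$ enters: it turns the cardinality cutoff $M$ into a mass loss of at most $(1-\delta_0)M/2$. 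Choosing $M\simeq 2U/(1-\delta_0)$ makes the right-hand side at least $c=\tfrac{\delta_0\delta_1}{20}U$ while $e^{-\delta_1 M/10}\le e^{-\delta_1 U/(5(1-\delta_0))}\le e^{-c}$, yielding $\mathbb{P}(V<c)\le 2e^{-c}$ after absorbing constants (small-$c$ cases being trivial since then $3e^{-c}\ge 1$).

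Your alternative via negative association is correct and in fact gives a strictly stronger bound with no factor $3$: the indicators $(\ind{j\in\sigma(I)})_{j\in\NN_d}$ are a simple random sample without replacement, hence negatively associated, so
\[
\mathbb{E}\big[e^{-V}\big]\le\prod_{j\in J}\Big(1-\tfrac{|I|}{d}(1-e^{-u_j})\Big)\le\exp\Big(-\delta_1\sum_{j\in J}u_j\big(1-\tfrac{u_j}{2}\big)\Big)\le\exp\Big(-\frac{\delta_1(3+\delta_0)}{4}\,U\Big),
\]
which dominates the stated estimate. The cited source presumably avoids this only to stay self-contained with Lemma~\ref{lem:6}.
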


\begin{lemma}
\label{lem:8}
For $d, N\in\NN$, $\varepsilon\in (0, 1/(50q)]$ and an integer
$1\le r\le d$ we define
\[
E=\{x\in B^q_N\cap\ZZ^d : \sum_{i=1}^rx_i^q<\varepsilon^{q+1}\kappa(d, N)^q r\}.
\]
If $\kappa(d, N)\ge10$, then we have
\begin{align}
\label{eq:37}
|E|\le 4e^{-\frac{\varepsilon r}{10}}|B^q_N\cap\ZZ^d|.
\end{align}
As a consequence, $B^q_N\cap\ZZ^d$ can be written as a disjoint sum
\begin{align}
\label{eq:38}
\begin{split}
B^q_N\cap\ZZ^d &= \Big( \bigcup_{\varepsilon^{q+1}\kappa(d, N)^q r\leq l\le N^q}\big(B_{l^{1/q}}^{q,(r)}\cap\ZZ^r\big) \times\big(S^{q,d-r}_{(N^q-l)^{1/q}}\cap\ZZ^{d-r}\big) \Big) \cup E',
\end{split}
\end{align}
where $E' \subset \ZZ^d$ satisfies $|E'|\le 4e^{-\frac{\varepsilon r}{10}}|B^q_N\cap\ZZ^d|$.
\end{lemma}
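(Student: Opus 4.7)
The approach is to exploit the permutation invariance of $B^q_N\cap\ZZ^d$ together with Lemmas~\ref{lem:5} and~\ref{lem:6}. Writing $\kappa=\kappa(d,N)$ and interpreting the sum in the definition of $E$ as $\sum_{i=1}^r|x_i|^q$, I first observe that since both $B^q_N$ and $\ZZ^d$ are invariant under coordinate permutations, for every $\sigma\in{\rm Sym}(d)$ we have $|E|=|\{x\in B^q_N\cap\ZZ^d:\sum_{i=1}^r|x_{\sigma(i)}|^q<\varepsilon^{q+1}\kappa^qr\}|$. Averaging over the uniform measure on ${\rm Sym}(d)$ and swapping the sum and the expectation gives
\begin{align*}
|E|=\sum_{x\in B^q_N\cap\ZZ^d}\mathbb{P}\Big[\sigma:\sum_{i=1}^r|x_{\sigma(i)}|^q<\varepsilon^{q+1}\kappa^qr\Big],
\end{align*}
which reduces the problem to a uniform estimate on this probability.

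Next, I perform a dichotomy based on the number of large coordinates of $x$. Apply Lemma~\ref{lem:5} with $\varepsilon_1=5\varepsilon$ and $\varepsilon_2=\varepsilon$, both admissible since $\varepsilon\le 1/(50q)$ forces $5\varepsilon\le 1/(10q)$; this isolates a \emph{bad} set $F$ of cardinality at most $2e^{-d/10}|B^q_N\cap\ZZ^d|$ on which I bound the probability trivially by $1$. For $x\notin F$, the set $I_x=\{i\in\NN_d:|x_i|\ge\varepsilon\kappa\}$ satisfies $|I_x|\ge 5\varepsilon d$, and the pointwise estimate
\begin{align*}
\sum_{i=1}^r|x_{\sigma(i)}|^q\ge(\varepsilon\kappa)^q\,\big|\{i\in\NN_r:\sigma(i)\in I_x\}\big|
\end{align*}
shows that the event in question forces $|\{i\in\NN_r:\sigma(i)\in I_x\}|\le\varepsilon r$. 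Since $\sigma\mapsto\sigma^{-1}$ preserves the uniform measure on ${\rm Sym}(d)$, this event has the same probability as $|\sigma(I_x)\cap\NN_r|\le\varepsilon r$, and Lemma~\ref{lem:6} applied with $\delta_1=5\varepsilon$, $\delta_2=\varepsilon$ (so that $5\delta_2=\delta_1$) bounds it by $e^{-\varepsilon r/2}$. Combining the two cases and using $\varepsilon r\le r\le d$,
\begin{align*}
|E|\le\big(2e^{-d/10}+e^{-\varepsilon r/2}\big)|B^q_N\cap\ZZ^d|\le 4e^{-\varepsilon r/10}|B^q_N\cap\ZZ^d|,
\end{align*}
which establishes \eqref{eq:37}.

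For the decomposition \eqref{eq:38}, I parametrize each $x\in B^q_N\cap\ZZ^d$ by splitting $x=(y,z)$ with $y=(x_1,\ldots,x_r)$ and $z=(x_{r+1},\ldots,x_d)$ and setting $l(x):=N^q-|z|_q^q$. The constraint $|x|_q\le N$ rewrites as $|y|_q^q\le l(x)$, so $y\in B_{l(x)^{1/q}}^{q,(r)}\cap\ZZ^r$, while $z\in S_{(N^q-l(x))^{1/q}}^{q,(d-r)}\cap\ZZ^{d-r}$ by construction. Since $l(x)\in[0,N^q]$ is uniquely determined by $x$, this gives a disjoint decomposition indexed by $l(x)$. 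Restricting the union to $l\ge\varepsilon^{q+1}\kappa^qr$ leaves a remainder $E'$ consisting of those $x$ with $l(x)<\varepsilon^{q+1}\kappa^qr$; for such $x$ one has $\sum_{i=1}^r|x_i|^q=|y|_q^q\le l(x)<\varepsilon^{q+1}\kappa^qr$, hence $E'\subseteq E$, and the bound on $|E'|$ transfers directly from \eqref{eq:37}.

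The principal obstacle is the simultaneous calibration of the parameters across Lemmas~\ref{lem:5} and~\ref{lem:6}. One must arrange $\varepsilon_2^q\,\delta_2=\varepsilon^{q+1}$ so that the conclusion of Lemma~\ref{lem:6} yields precisely the target threshold, while the common choice $\varepsilon_1=\delta_1=5\varepsilon$ has to satisfy both $\varepsilon_1\le 1/(10q)$ (from Lemma~\ref{lem:5}) and $5\delta_2\le\delta_1$ (from Lemma~\ref{lem:6}); these two constraints together explain the assumption $\varepsilon\le 1/(50q)$ in the hypothesis.
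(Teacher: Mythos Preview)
Your proof is correct and takes essentially the same approach as the paper's: both combine Lemma~\ref{lem:5} (to control the points with few large coordinates) with Lemma~\ref{lem:6} (to handle the remaining points via permutation averaging), the only organizational difference being that the paper first writes $E\subseteq E_1\cup E_2$ and then averages over ${\rm Sym}(d)$ on $E_1$, whereas you average first and then split into good and bad $x$. Your treatment of the decomposition~\eqref{eq:38} via $l(x)=N^q-|z|_q^q$ and the inclusion $E'\subseteq E$ is likewise equivalent to the paper's argument.
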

\begin{proof}
Let $\delta_1\in(0, 1/(10q)]$ be such that $\delta_1\ge5\varepsilon$,
and define $I_x=\{i\in\NN_d: |x_i|\ge\varepsilon\kappa(d, N)\}$. We
have $E\subseteq E_1\cup E_2$, where
\begin{align*}
E_1&=\{x\in B^q_N\cap\ZZ^d : \sum_{i\in I_x\cap\NN_r}|x_i|^q<\varepsilon^{q+1}\kappa(d, N)^q r\ \text{ and }\ |I_x|\ge\delta_1d\},\\
E_2&=\{x\in B^q_N\cap\ZZ^d : |I_x|<\delta_1d\}.
\end{align*}
By Lemma \ref{lem:5} (with $\varepsilon_1=\delta_1$ and
$\varepsilon_2=\varepsilon$) we have
$|E_2|\le2e^{-\frac{d}{10}}|B^q_N\cap\ZZ^d|$, provided that
$\kappa(d, N)\ge10$. Observe that
\begin{align*}
|E_1|&=\sum_{x\in B^q_N\cap\ZZ^d}\frac{1}{d!}\sum_{\sigma\in{\rm Sym}(d)}\ind{E_1}(\sigma^{-1}\cdot x)\\
&=\sum_{x\in B^q_N\cap\ZZ^d}\mathbb P[\{\sigma\in{\rm Sym}(d) :
\sum_{i\in \sigma(I_x)\cap\NN_r}|x_{\sigma^{-1}(i)}|^q<\varepsilon^{q+1}\kappa(d, N)^q r\ \text{ and }\ |\sigma(I_x)|\ge\delta_1d\}],
\end{align*}
since $I_{\sigma^{-1}\cdot x}=\sigma(I_x)$.  Now by Lemma \ref{lem:6}
(with $J=\NN_r$, $\delta_2=\frac{\delta_1}{5}$ and $\delta_1$ as
above) we obtain, for every $x\in B^q_N\cap\ZZ^d$ such that
$|I_x|\ge \delta_1 d$ , the estimate
\begin{align*}
\mathbb P[\{\sigma\in&{\rm Sym}(d) :
\sum_{i\in \sigma(I_x)\cap\NN_r}|x_{\sigma^{-1}(i)}|^q<\varepsilon^{q+1}\kappa(d, N)^q r\ \text{ and }\ |\sigma(I_x)|\ge\delta_1d\}]\\
&\le
\mathbb P[\{\sigma\in{\rm Sym}(d)
: |\sigma(I_x)\cap\NN_r|\le\delta_2r\}]\le 2e^{-\frac{\delta_1 r}{10}},
\end{align*}
since
\[
\{\sigma\in{\rm Sym}(d) : \sum_{i\in \sigma(I_x)\cap\NN_r}|x_{\sigma^{-1}(i)}|^q<\varepsilon^{q+1}\kappa(d, N)^q r\ \text{
and }\ |\sigma(I_x)|\ge\delta_1d \ \text{ and
}\ |\sigma(I_x)\cap\NN_r|>\delta_2r\}=\emptyset.
\]
Thus $|E_1|\le 2e^{-\frac{\varepsilon r}{2}}|B^q_N\cap \ZZ^d|$, which
proves \eqref{eq:37}. To prove \eqref{eq:38} we write
\[
B^q_N\cap\ZZ^d=\bigcup_{l=0}^{N^q}\big(B_{l^{1/q}}^{q,(r)}\cap\ZZ^r\big) \times \big(S^{q,d-r}_{(N^q-l)^{1/q}}\cap\ZZ^{d-r}\big).
\]
Then we see that
\[
\Big(\bigcup_{l=0}^{N^q}\big(B_{l^{1/q}}^{q,(r)}\cap\ZZ^r\big) \times\big(S^{q,d-r}_{(N^q-l)^{1/q}}\cap\ZZ^{d-r}\big)\Big)\cap E^{\bf c} =\Big(\bigcup_{l\ge\varepsilon^{q+1}\kappa(d, N)^q r}\big(B_{l^{1/q}}^{q,(r)}\cap\ZZ^r\big) \times\big(S^{d-r}_{(N^q-l)^{1/q}}\cap\ZZ^{d-r}\big)\Big)\cap E^{\bf c},
\]
and consequently we obtain \eqref{eq:38} with some $E'\subseteq E$.
The proof is completed.
\end{proof}

\begin{lemma}
\label{lem:9}
For $d, N\in\NN$ and $\varepsilon\in(0, 1/(50q)]$ if
$\kappa(d, N)\ge10$, then for every $1\le r\le d$ and $\xi\in\TT^d$ we
have
\begin{align*}
|\mathfrak m^{B^q}_N(\xi)|\le\sup_{l\ge\varepsilon^{q+1}\kappa(d, N)^q r}|\mathfrak
m_{l^{1/q}}^{B^q,(r)}(\xi_1,\ldots, \xi_r)|+4e^{-\frac{\varepsilon r}{10}},
\end{align*}
where
\begin{align*}
\mathfrak m_{R}^{B^q,(r)}(\eta):=\frac{1}{|B_{R}^{q,(r)}\cap\ZZ^d|}\sum_{x\in
B_{R}^{q,(r)}\cap\ZZ^d}e^{2\pi i \eta\cdot x},\qquad \eta\in \TT^r,
\end{align*}
is the lower dimensional multiplier with  $r\in \NN$ and $R>0$.

\end{lemma}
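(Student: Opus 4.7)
The plan is to decompose the exponential sum defining $\mathfrak m^{B^q}_N(\xi)$ using the slicing identity \eqref{eq:38} from Lemma \ref{lem:8}, pull out the lower-dimensional multiplier from the factorized inner sum, and absorb the exceptional set $E'$ using the cardinality bound $|E'|\le 4 e^{-\varepsilon r/10}|B^q_N\cap\ZZ^d|.$

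First, I write $x=(y, z)$ with $y\in\ZZ^r$ and $z\in\ZZ^{d-r}$, and split $\xi=(\xi', \xi'')$ accordingly. Since the assumption $\kappa(d,N)\ge 10$ and $\varepsilon\in (0, 1/(50q)]$ allows us to apply Lemma \ref{lem:8}, \eqref{eq:38} yields
\begin{align*}
|B^q_N\cap\ZZ^d|\cdot \mathfrak m_N^{B^q}(\xi)
&=\sum_{l\ge \varepsilon^{q+1}\kappa(d,N)^q r}\bigg(\sum_{y\in B_{l^{1/q}}^{q,(r)}\cap\ZZ^r}e^{2\pi i \xi'\cdot y}\bigg)\bigg(\sum_{z\in S^{q,d-r}_{(N^q-l)^{1/q}}\cap\ZZ^{d-r}}e^{2\pi i \xi''\cdot z}\bigg)\\
&\quad+\sum_{x\in E'}e^{2\pi i\xi\cdot x},
\end{align*}
where the range of $l$ is restricted to integers with $\varepsilon^{q+1}\kappa(d,N)^q r\le l\le N^q$ in the main sum.

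Next, I take absolute values, recognize the inner $y$-sum as $|B_{l^{1/q}}^{q,(r)}\cap\ZZ^r|\cdot\mathfrak m_{l^{1/q}}^{B^q,(r)}(\xi_1,\ldots,\xi_r)$, and bound it by $|B_{l^{1/q}}^{q,(r)}\cap\ZZ^r|\cdot S$, where
\[
S:=\sup_{l\ge \varepsilon^{q+1}\kappa(d,N)^q r}\bigl|\mathfrak m_{l^{1/q}}^{B^q,(r)}(\xi_1,\ldots,\xi_r)\bigr|.
\]
The $z$-sum is trivially bounded by the corresponding sphere cardinality. Summing back over $l$, the product of these cardinalities reassembles into at most $|B^q_N\cap\ZZ^d|$, so the main contribution is bounded by $S\cdot|B^q_N\cap\ZZ^d|$. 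The exceptional contribution is bounded by $|E'|\le 4e^{-\varepsilon r/10}|B^q_N\cap\ZZ^d|$. Dividing by $|B^q_N\cap\ZZ^d|$ yields the desired inequality.

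I expect no serious obstacle; the step requiring the most care is keeping the product structure of the slicing compatible with the factorization of $e^{2\pi i \xi\cdot x}$, and verifying that after dropping the restriction on $l$, the sum of volumes of the product pieces $\big(B_{l^{1/q}}^{q,(r)}\cap\ZZ^r\big)\times\big(S^{q,d-r}_{(N^q-l)^{1/q}}\cap\ZZ^{d-r}\big)$ is dominated by $|B^q_N\cap\ZZ^d|$ rather than equal to it (the difference being absorbed into $E'$). This is precisely what \eqref{eq:38} provides.
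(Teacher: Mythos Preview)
Your proposal is correct and follows essentially the same route as the paper: you invoke the disjoint slicing decomposition \eqref{eq:38} from Lemma~\ref{lem:8}, factorize $e^{2\pi i\xi\cdot x}$ along $\ZZ^r\times\ZZ^{d-r}$, pull out the supremum of the lower-dimensional multiplier, and use disjointness of the pieces to sum the cardinalities back to at most $|B^q_N\cap\ZZ^d|$, while the exceptional set contributes the $4e^{-\varepsilon r/10}$ term. This is exactly the paper's argument.
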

\begin{proof}
	We identify $\RR^d\equiv\RR^r\times\RR^{d-r}$ and $\TT^d\equiv\TT^r\times\TT^{d-r}$ and we will write $\RR^d\ni x=(x^1, x^2)\in \RR^r\times\RR^{d-r}$ and $\TT^d\ni \xi=(\xi^1, \xi^2)\in \TT^r\times\TT^{d-r}$ respectively.
	Invoking \eqref{eq:38} one obtains
	\begin{align*}
	&|\mathfrak m^{B^q}_N(\xi)|\\
	&\le\frac{1}{|B^q_N\cap\ZZ^d|}\sum_{l\ge\varepsilon^{q+1}\kappa(d, N)^q r}\;\sum_{x^2\in
		S_{(N^q-l)^{1/q}}^{q,d-r}\cap\ZZ^{d-r}}|B_{l^{1/q}}^{q,(r)}\cap\ZZ^r|\frac{1}{|B_{l^{1/q}}^{q,(r)}\cap\ZZ^r|}\Big|\sum_{x^1\in
		B_{l^{1/q}}^{q,(r)}\cap\ZZ^r}e^{2\pi i \xi^1\cdot
		x^1}\Big|+4e^{-\frac{\varepsilon r}{10}}\\
	&  \le\sup_{l\ge\varepsilon^{q+1}\kappa(d, N)^q r}|\mathfrak
	m_{l^{1/q}}^{B^q,(r)}(\xi_1,\ldots, \xi_r)|+4e^{-\frac{\varepsilon r}{10}}.
	\end{align*}
	In the last inequality the disjointness in the decomposition from \eqref{eq:38} has been used.
\end{proof}

The next two lemmas give information on the size difference between the balls $B_R^{q,(r)}$ and their shifts $z+B_R^{q,(r)}$ for  $z\in \RR^r$.
\begin{lemma}
\label{lem:10}
Let $R\ge1$ and let $r\in\NN$ be such that $r\le R^{\delta}$ for some
$\delta\in(0, q/(q+1))$. Then for every $z\in\RR^r$ we have
\begin{align}
\label{eq:40}
\big||(z+B_R^{q,(r)})\cap\ZZ^r|-|B_R^{q,(r)}|\big|\le |B_R^{q,(r)}|r^{(q+1)/q}R^{-1}e^{r^{(q+1)/q}/R}\le e|B_R^{q,(r)}|R^{-1+(q+1)\delta/q}.
\end{align}	
\end{lemma}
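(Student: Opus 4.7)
The plan is to sandwich $|(z+B_R^{q,(r)})\cap\ZZ^r|$ between the volumes of two slightly enlarged/shrunk $q$-balls using the standard cube-packing trick. Set $Q_r:=[-1/2,1/2]^r$ and observe the crucial point: for every $y\in Q_r$ we have $|y|_q\le r^{1/q}/2$, since $|y|_q^q\le r\cdot 2^{-q}$. This is where the $q$-norm (rather than the Euclidean norm, which would produce the cruder bound $\sqrt{r}/2$) yields the right exponent $(q+1)/q$ in the final estimate.

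Because the translates $n+Q_r$, $n\in\ZZ^r$, tile $\RR^r$ up to a null set, we have
\[
|(z+B_R^{q,(r)})\cap\ZZ^r|=\sum_{n\in(z+B_R^{q,(r)})\cap\ZZ^r}|n+Q_r|.
\]
For each such $n$, the triangle inequality in $|\cdot|_q$ together with the cube bound gives $n+Q_r\subseteq z+B_{R+r^{1/q}/2}^{q,(r)}$, so the left-hand side is at most $|B_{R+r^{1/q}/2}^{q,(r)}|$ (translation invariance of Lebesgue measure removes $z$). Conversely, for any $y\in z+B_{R-r^{1/q}/2}^{q,(r)}$, the unique $n\in\ZZ^r$ with $y\in n+Q_r$ satisfies $|n-z|_q\le|y-z|_q+r^{1/q}/2\le R$, which proves the lower bound $|(z+B_R^{q,(r)})\cap\ZZ^r|\ge|B_{R-r^{1/q}/2}^{q,(r)}|$.

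Writing $\alpha:=r^{1/q}/(2R)$ and using the scaling $|B_s^{q,(r)}|=s^r|B_1^{q,(r)}|$, the two bounds combine to
\[
\big||(z+B_R^{q,(r)})\cap\ZZ^r|-|B_R^{q,(r)}|\big|\le|B_R^{q,(r)}|\cdot\max\big\{(1+\alpha)^r-1,\ 1-(1-\alpha)^r\big\}.
\]
The inequality $(1+\alpha)^r\le e^{r\alpha}$ together with $e^t-1\le t\,e^t$ bounds the first expression by $r\alpha\,e^{r\alpha}$, while Bernoulli's inequality bounds the second by $r\alpha$. Since $r\alpha=r^{(q+1)/q}/(2R)$, both are dominated by $r^{(q+1)/q}R^{-1}e^{r^{(q+1)/q}/R}$, which gives the first inequality in \eqref{eq:40} (with room to spare). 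For this step to make sense we need $\alpha<1$, but this follows from $r\le R^\delta$ with $\delta<1$ and $R\ge 1$.

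For the second inequality, the assumption $r\le R^\delta$ yields $r^{(q+1)/q}\le R^{\delta(q+1)/q}$, so $r^{(q+1)/q}R^{-1}\le R^{-1+\delta(q+1)/q}$. The hypothesis $\delta<q/(q+1)$ forces $-1+\delta(q+1)/q<0$, and since $R\ge 1$ this in particular gives $r^{(q+1)/q}/R\le 1$, hence $e^{r^{(q+1)/q}/R}\le e$. No step is particularly hard; the only subtle point is to use the $q$-norm on $Q_r$ instead of the Euclidean norm, without which one would end up with the weaker exponent $r^{3/2}/R$.
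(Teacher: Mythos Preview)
Your proof is correct. The paper itself does not supply a proof of this lemma but merely refers to \cite[Lemma~2.9]{BMSW2}; your cube-packing argument is exactly the standard one underlying that reference, with the appropriate adaptation of computing $|y|_q\le r^{1/q}/2$ for $y\in Q_r$ directly in the $q$-norm rather than via the Euclidean norm, which is precisely what makes the exponent $(q+1)/q$ come out right for general $q$.
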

\begin{proof}
For the proof we refer to \cite[Lemma 2.9]{BMSW2}.
\end{proof}

\begin{lemma}
\label{lem:11}
Let $R\ge1$ and let $r\in\NN$ be such that $r\le R^{\delta}$ for some
$\delta\in(0, q/(q+1))$. Then for every $z\in\RR^r$ we have
\begin{align*}
\big|\big(B_{R}^{q,(r)}\cap\ZZ^r\big)\triangle\big((z+B_{R}^{q,(r)})\cap\ZZ^r\big)\big|&\le
4e\big(r|z|R^{-1}e^{r|z|R^{-1}}+e^{r|z|R^{-1}}R^{-1+(q+1)\delta/q}\big)|B_R^{q,(r)}|\\
&\le
4e\big(|z|R^{-1+\delta}e^{|z|R^{-1+\delta}}+
e^{|z|R^{-1+\delta}}R^{-1+(q+1)\delta/q}\big)|B_R^{q,(r)}|.
\end{align*}
\end{lemma}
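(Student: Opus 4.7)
The plan is to use the standard sandwich: bound the symmetric difference by a thin spherical annulus, and then control the annulus via Lemma \ref{lem:10} on the outer radius together with a cube-packing lower bound on the inner radius. Since $q \ge 2$ we have $|z|_q \le |z|$, so the triangle inequality in the $q$-norm yields
$$\bigl(B_R^{q,(r)} \cap \ZZ^r\bigr) \triangle \bigl((z + B_R^{q,(r)}) \cap \ZZ^r\bigr) \subseteq \bigl(B_{R+|z|}^{q,(r)} \setminus B_{R-|z|}^{q,(r)}\bigr) \cap \ZZ^r,$$
with the convention $B_{R-|z|}^{q,(r)} = \emptyset$ when $|z| > R$. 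The target cardinality thus equals $|B_{R+|z|}^{q,(r)} \cap \ZZ^r| - |B_{R-|z|}^{q,(r)} \cap \ZZ^r|$.

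For the outer ball, Lemma \ref{lem:10} applied at the origin (valid since $r \le R^\delta \le (R+|z|)^\delta$) gives, after using $|B_{R+|z|}^{q,(r)}| \le e^{r|z|/R} |B_R^{q,(r)}|$ and $(R+|z|)^{-1+(q+1)\delta/q} \le R^{-1+(q+1)\delta/q}$,
$$|B_{R+|z|}^{q,(r)} \cap \ZZ^r| \le |B_{R+|z|}^{q,(r)}| + e\,|B_R^{q,(r)}|\,e^{r|z|/R}R^{-1+(q+1)\delta/q}.$$
For the inner ball, the disjoint unit cubes $\{x+[0,1)^r\}_{x \in B_{R-|z|}^{q,(r)} \cap \ZZ^r}$ cover $B_{R-|z|-r^{1/q}}^{q,(r)}$ (since $|y-\lfloor y\rfloor|_q \le r^{1/q}$), yielding $|B_{R-|z|}^{q,(r)} \cap \ZZ^r| \ge |B_{R-|z|-r^{1/q}}^{q,(r)}|$, interpreted as $0$ if the radius is negative. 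The elementary mean-value estimate $(R+|z|)^r-(R-|z|-r^{1/q})^r \le r(2|z|+r^{1/q})(R+|z|)^{r-1}$, combined with $(1+|z|/R)^{r-1} \le e^{r|z|/R}$ and $r^{(q+1)/q} R^{-1} \le R^{-1+(q+1)\delta/q}$, then shows $|B_{R+|z|}^{q,(r)}|-|B_{R-|z|-r^{1/q}}^{q,(r)}| \le (2r|z|R^{-1}+R^{-1+(q+1)\delta/q})e^{r|z|/R} |B_R^{q,(r)}|$. Combining these and absorbing constants via $2 \le 4e$ and $1+e \le 4e$ produces the first claimed inequality; the second follows from $r \le R^\delta$ together with the monotonicity of $x \mapsto xe^x$ on $[0,\infty)$.

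The main obstacle is the boundary case $R-|z|-r^{1/q} \le 0$, in which the cube-packing bound becomes vacuous. Here one still has $r|z|R^{-1} \ge r - r^{(q+1)/q}R^{-1} \ge 1 - R^{-1+(q+1)\delta/q}$, so $4e \cdot r|z|R^{-1} + 3e\, R^{-1+(q+1)\delta/q} \ge 4e - e\, R^{-1+(q+1)\delta/q} \ge 3e > 1$. This algebraic fact is precisely what is needed to absorb the trivial outer estimate $|B_{R+|z|}^{q,(r)} \cap \ZZ^r| \le (1+e\, R^{-1+(q+1)\delta/q}) e^{r|z|/R} |B_R^{q,(r)}|$ into the claimed form.
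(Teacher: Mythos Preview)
Your argument is correct. The paper does not supply its own proof of this lemma; it simply cites \cite[Lemma 2.10]{BMSW2}, so there is nothing to compare against directly. Your annulus approach---sandwiching the symmetric difference inside $(B_{R+|z|}^{q,(r)} \setminus B_{R-|z|}^{q,(r)}) \cap \ZZ^r$, then bounding the outer count via Lemma \ref{lem:10} and the inner count via cube packing---is the natural route and is essentially the method used in the cited reference for $q=2$. The separate handling of the degenerate case $R-|z|-r^{1/q}\le 0$ is clean, and the constant bookkeeping ($2\le 4e$, $1+e\le 4e$) is accurate.
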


\begin{proof}
For the proof we refer to \cite[Lemma 2.10]{BMSW2}.
\end{proof}

We now recall the dimension-free estimates for the multipliers $m^{B^q_R}(\xi):=|B^q_R|^{-1}\mathcal F(\ind{B^q_R})(\xi)$ for $\xi\in\RR^d$.

\begin{lemma}{\cite[Lemma 2.11]{BMSW2}}
\label{lem:19}
There exist constants $c_q,C>0$ independent of $d$ and such that for
every $R>0$ and $\xi\in\RR^d$ we have
\begin{equation*}
|m^{B^q}(\xi)|\leq C (c_q R d^{-1/q} |\xi|)^{-1},
\quad \text{ and }\quad
|m^{B^q}(\xi)-1|\leq C (c_q Rd^{-1/q} |\xi|).
\end{equation*}
\end{lemma}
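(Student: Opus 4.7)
The plan is to handle the two bounds separately. By the scaling identity $m^{B^q}_R(\xi) = m^{B^q}_1(R\xi)$ it suffices to prove both for $R=1$.

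For the upper bound on $|m^{B^q}_1(\xi) - 1|$, I would start from the pointwise inequality $|e^{2\pi i\theta}-1|\le 2\pi|\theta|$ to obtain
\[
|m^{B^q}_1(\xi)-1| \le \frac{2\pi}{|B^q_1|}\int_{B^q_1}|\xi\cdot x|\,\dif x.
\]
Applying Cauchy--Schwarz and exploiting the symmetry of $B^q_1$ under coordinate permutations and sign changes, the cross terms in $(\xi\cdot x)^2 = \sum_{i,j}\xi_i\xi_j x_ix_j$ all vanish, so $\int_{B^q_1}(\xi\cdot x)^2\,\dif x = |\xi|^2\int_{B^q_1} x_1^2\,\dif x$. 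A one-dimensional Beta-function computation (slicing in $x_1$ and substituting $u=x_1^q$) yields the closed form
\[
\frac{1}{|B^q_1|}\int_{B^q_1} x_1^2\,\dif x \;=\; \frac{\Gamma(3/q)}{q\,\Gamma(1+1/q)}\cdot\frac{\Gamma(1+d/q)}{\Gamma(1+(d+2)/q)},
\]
which by Wendel's lower bound $\Gamma(x+s)/\Gamma(x)\gtrsim x^s$ (valid for $0<s\le 1$, hence applicable with $s=2/q$ when $q\ge 2$, and with $x=1+d/q$) is $\lesssim_q d^{-2/q}$. Substituting back delivers the desired linear estimate.

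For the decay bound, I would perform one-dimensional integration by parts in the direction $v:=\xi/|\xi|$. Introducing the hyperplane-section profile $A(t):=|B^q_1\cap\{x: v\cdot x = t\}|$, we have $\hat\chi_{B^q_1}(\xi)=\int A(t)\,e^{2\pi i|\xi|t}\,\dif t$. The Brunn concavity principle guarantees that $A^{1/(d-1)}$ is concave on its compact support, so $A$ is an even, compactly supported, unimodal function, and therefore $\|A\|_{BV}=2A(0)$. Standard integration by parts then yields
\[
|\hat\chi_{B^q_1}(\xi)|\le \frac{A(0)}{\pi|\xi|} = \frac{|B^q_1\cap v^\perp|}{\pi|\xi|},
\]
and dividing by $|B^q_1|$ reduces the entire estimate to the dimension-free hyperplane-section bound
\[
\sup_{|v|=1}\frac{|B^q_1\cap v^\perp|}{|B^q_1|}\;\lesssim_q\;d^{1/q}.
\]

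The hyperplane-section bound is the main obstacle. For coordinate directions $v=e_i$ it is immediate from the explicit formula for $|B^{q,(d-1)}_1|/|B^q_1|$ combined with Stirling, yielding the sharp constant $c_q d^{1/q}$. For arbitrary unit vectors $v$ I would invoke Ball's hyperplane inequality, $|K\cap v^\perp|\le\sqrt{12}\,L_K|K|^{(d-1)/d}$ for any symmetric convex body $K$, together with the classical fact (Ball, Meyer--Pajor) that the isotropic constant $L_{B^q_1}$ is uniformly bounded in $d$ for each fixed $q\in[2,\infty)$; combined with the volume asymptotic $|B^q_1|^{-1/d}\simeq_q d^{1/q}$ (Stirling), this closes the argument. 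It is worth noting that a more elementary route --- running the integration by parts in a coordinate direction $e_i$ with $|\xi_i|=\|\xi\|_\infty\ge |\xi|/\sqrt d$ --- only yields $|m^{B^q}_1(\xi)|\lesssim_q d^{1/q+1/2}/|\xi|$, off by $\sqrt d$; bridging this gap is precisely where the convex-geometric input is indispensable. Brunn's principle alone similarly falls short, since it only gives $A(0)/|B^q_1|\le d/(2|v|_{q'})$, which is off by a factor of $d^{1-1/q}$ for axis directions.
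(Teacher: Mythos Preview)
The paper does not prove this lemma; it is quoted from \cite[Lemma~2.11]{BMSW2} and used as a black box. Your argument is essentially Bourgain's original proof of this estimate, and the overall structure --- reduction to $R=1$, the variance computation for the bound near the origin, and integration by parts in the direction $\xi/|\xi|$ combined with Brunn--Minkowski for the decay bound --- is correct and standard. Reducing the decay estimate to the hyperplane-section bound $\sup_{|v|=1}|B^q_1\cap v^\perp|/|B^q_1|\lesssim_q d^{1/q}$ is exactly the right move.

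There is, however, one error in the last step. The inequality you write as ``Ball's hyperplane inequality'', $|K\cap v^\perp|\le\sqrt{12}\,L_K|K|^{(d-1)/d}$, has the isotropic constant on the wrong side: the correct log-concave bound (Hensley, with Ball's sharp constant) is
\[
|K\cap v^\perp|\;\le\;\frac{1}{\sqrt{2}}\cdot\frac{|K|}{\sigma_v(K)},\qquad \sigma_v(K)^2=\frac{1}{|K|}\int_K(v\cdot x)^2\,\dif x.
\]
With your version you then appeal to an \emph{upper} bound on $L_{B^q_1}$, which is irrelevant here; the correct version instead requires $\sigma_v$ bounded \emph{below}. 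But this you have already computed in the first half of the argument: by the permutation and sign symmetry of $B^q_1$ one has $\sigma_v(B^q_1)^2=|v|^2\cdot|B^q_1|^{-1}\int_{B^q_1}x_1^2\,\dif x\simeq_q d^{-2/q}$ for every unit vector $v$. Feeding this into the correct Hensley--Ball inequality gives $|B^q_1\cap v^\perp|/|B^q_1|\lesssim d^{1/q}$ directly, and the detour through isotropic constants becomes unnecessary. With this correction the proof is complete.
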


Lemma \ref{lem:11} and Lemma \ref{lem:19} are essential in proving the following estimate.

\begin{lemma}
	\label{lem:20}
	There exists a constant $C_q>0$ such that for every  $\delta\in(0, 1/2)$ and for all
	$r\in\NN$ and $R>0$ satisfying $1\le r\le R^{\delta}$ we have  
	\begin{align*}
	|\mm^{B^q, (r)}_R(\eta)|\le C_q \big(\kappa(r, R)^{-\frac{1}{3}+\frac{2\delta}{3}}
	+r\kappa(r, R)^{-\frac{1+\delta}{3}}+\big(\kappa(r, R)\|\eta\|\big)^{-1}\big)
	\end{align*}
	for every $\eta\in\TT^r$.
\end{lemma}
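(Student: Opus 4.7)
The plan is to compare the discrete multiplier $\mm^{B^q,(r)}_R$ with its continuous counterpart $m^{B^q,(r)}_R$, so that the decay $|m^{B^q,(r)}_R(\eta)|\lesssim_q(\kappa\|\eta\|)^{-1}$ furnished by Lemma \ref{lem:19} transfers to the discrete side, with the remaining two summands of the claim arising as errors in the comparison. For this I would introduce, for a parameter $s\in(0,1]$ to be optimized later, the smoothed average
\[
M_s(\eta):=\frac{1}{s^r|B^{q,(r)}_R\cap\ZZ^r|}\int_{Q_s}\sum_{x\in(z+B^{q,(r)}_R)\cap\ZZ^r}e^{2\pi i\eta\cdot x}\,\dif z,
\]
where $Q_s:=[-s/2,s/2]^r$. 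For $z\in Q_s$ one has $|z|\le sr^{1/2}/2\le r^{1/2}\le R^{\delta/2}$, so Lemma \ref{lem:11} combined with Lemma \ref{lem:10} (which yields $|B^{q,(r)}_R|\simeq_q|B^{q,(r)}_R\cap\ZZ^r|$) produces the comparison
\[
\bigl|\mm^{B^q,(r)}_R(\eta)-M_s(\eta)\bigr|\lesssim_q sr^{1/2}R^{-1+\delta}+R^{-1+(q+1)\delta/q}.
\]

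Next, Fubini's theorem rewrites $M_s(\eta)$ as $|B^{q,(r)}_R\cap\ZZ^r|^{-1}\sum_{x\in\ZZ^r}(\ind{B^{q,(r)}_R}\ast h_s)(x)\,e^{2\pi i\eta\cdot x}$ with $h_s:=s^{-r}\ind{Q_s}$, and since $\ind{B^{q,(r)}_R}\ast h_s$ is a continuous, compactly supported function, the Poisson summation formula applies and gives
\[
M_s(\eta)=\frac{|B^{q,(r)}_R|\,\mathcal F h_s(\eta)}{|B^{q,(r)}_R\cap\ZZ^r|}\,m^{B^q,(r)}_R(\eta)+\frac{1}{|B^{q,(r)}_R\cap\ZZ^r|}\sum_{k\in\ZZ^r\setminus\{0\}}\mathcal F h_s(\eta-k)\,\mathcal F\ind{B^{q,(r)}_R}(\eta-k).
\]
Using $|\mathcal F h_s(\eta)|\le 1$, the main term is controlled directly by Lemma \ref{lem:19} and furnishes precisely the $(\kappa\|\eta\|)^{-1}$ contribution demanded by the claim.

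The remaining task is to bound the Poisson tail. Here I would combine $|\mathcal F\ind{B^{q,(r)}_R}(\eta-k)|\lesssim_q|B^{q,(r)}_R|/(\kappa|\eta-k|)$ from Lemma \ref{lem:19} with the product bound $|\mathcal F h_s(\eta-k)|\le\prod_{j:k_j\ne 0}(\pi s|k_j-\eta_j|)^{-1}$ and sum over $k$ by stratifying according to the number $a=|\{j:k_j\ne 0\}|$ of active coordinates; a careful application of these bounds yields an error of order $(s\kappa)^{-1}$ times a factor polynomial in $r/s$. Substituting $R=\kappa r^{1/q}$ (so that the assumption $r\le R^\delta$ reads $r\lesssim\kappa^{q\delta/(q-\delta)}$) and balancing the Step~1 error $sr^{1/2}R^{-1+\delta}$ against this Poisson-tail bound by choosing $s\simeq\kappa^{-(1-\delta)/3}$ assembles the final estimate into the advertised form $\kappa^{-1/3+2\delta/3}+r\kappa^{-(1+\delta)/3}$. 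The main obstacle is precisely this last step: a crude application of Lemma \ref{lem:19} alone to the Poisson tail produces divergent harmonic sums $\sum_{k\ne 0}|k|^{-1}$, so the full product structure of $\mathcal F h_s$ must be kept alongside the Euclidean decay of $\mathcal F\ind{B^{q,(r)}_R}$, which is what ultimately pins down the cube-root exponent in the optimization.
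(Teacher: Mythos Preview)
Your Poisson-summation route has a genuine gap in the tail estimate. With only the dimension-free decay $|\mathcal F\ind{B^{q,(r)}_R}(\xi)|\lesssim_q |B^{q,(r)}_R|(\kappa|\xi|)^{-1}$ from Lemma~\ref{lem:19} and the coordinate-wise bound $|\mathcal Fh_s(\eta-k)|\le\prod_{j:k_j\ne0}(\pi s|k_j-\eta_j|)^{-1}$, the sum over $k\ne0$ is \emph{not} of size $(s\kappa)^{-1}$ times a factor polynomial in $r/s$. Stratify by the support $A$ of $k$ with $|A|=a$: after using $|\eta-k|\ge(\prod_{j\in A}|k_j|)^{1/a}$ one is led to $\bigl(\sum_{m\ge1}m^{-1-1/a}\bigr)^{a}\simeq(Ca)^{a}$ per fixed $A$, and summing over $\binom{r}{a}$ choices of $A$ and over $a$ produces a contribution of order $\kappa^{-1}\sum_{a=1}^{r}(Cr/s)^{a}$, i.e.\ exponential in $r$. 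Taking $s$ smaller only makes this worse. So ``keeping the full product structure of $\mathcal Fh_s$ alongside the Euclidean decay'' does not rescue the argument: a single power of $|\eta-k|^{-1}$ is simply not enough to tame an $r$-fold product of one-dimensional harmonic tails. You would need genuinely faster (dimension-dependent) decay of $\mathcal F\ind{B^{q,(r)}_R}$, which Lemma~\ref{lem:19} does not supply and which is delicate for $q$-balls with $q>2$.

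The paper sidesteps this entirely by a case split on $\max_j\|\eta_j\|$ at the threshold $\kappa(r,R)^{-(1+\delta)/3}$. If the maximum is large, a purely discrete shifting argument in the corresponding coordinate (averaging over $M\simeq\kappa^{(2-\delta)/3}$ translates and invoking Lemma~\ref{lem:11} for the symmetric-difference error) already gives $\kappa^{-1/3+2\delta/3}$; no Fourier transform of the ball is needed. If the maximum is small, one compares $\mm^{B^q,(r)}_R$ to the continuous multiplier by writing the difference as an average over $t\in Q^{(r)}$ of $e^{2\pi i x\cdot\eta}\ind{B^{q,(r)}_R}(x)-e^{2\pi i(x+t)\cdot\eta}\ind{B^{q,(r)}_R}(x+t)$; the set-difference part is again Lemma~\ref{lem:11}, and the phase-difference part is bounded pointwise by $2\pi\sum_j\|\eta_j\|\le 2\pi r\max_j\|\eta_j\|\le 2\pi r\kappa^{-(1+\delta)/3}$, which is exactly where the middle summand in the lemma comes from. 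The continuous multiplier is then handled by Lemma~\ref{lem:19}. The case split is the missing idea: it converts the problematic lattice tail into a single pointwise phase estimate that is linear in $r$.
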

\begin{proof}
The inequality is obvious when $R\le 16,$ so it suffices to consider $R> 16.$  	

Firstly, we assume that
$\max\{\|\eta_1\|,\ldots,\|\eta_r\|\}>\kappa(r, R)^{-\frac{1+\delta}{3}}$.  Let $M=\big\lfloor
\kappa(r, R)^{\frac{2-\delta}{3}}\big\rfloor$ and assume without loss of generality that $\|\eta_1\|>\kappa(r, R)^{-\frac{1+\delta}{3}}$. Then
\begin{align}
\label{eq:48}
\begin{split}
|\mm^{B^q, (r)}_R(\eta)|&\le
\frac{1}{|B_{R}^{q,(r)}\cap\ZZ^r|}\sum_{x\in
	B_{R}^{q,(r)}\cap\ZZ^r}\frac{1}{M}\Big|\sum_{s=1}^Me^{2\pi i
	(x+se_1)\cdot\eta}\Big|\\
&\quad +\frac{1}{M}\sum_{s=1}^M\frac{1}{|B_{R}^{q,(r)}\cap\ZZ^r|}\Big|\sum_{x\in
	B_{R}^{q,(r)}\cap\ZZ^r}e^{2\pi i
	x\cdot\eta}-e^{2\pi i
	(x+se_1)\cdot\eta}\Big|.
\end{split}
\end{align}
Since $\kappa(r,R)\ge 1$ we now see that 
\begin{align}
\label{eq:49}
\frac{1}{M}\Big|\sum_{s=1}^Me^{2\pi i
	(x+se_1)\cdot\eta}\Big|\le M^{-1}\|\eta_1\|^{-1}\le
2\kappa(r, R)^{-\frac{1}{3}+\frac{2\delta}{3}}.  
\end{align}
We have assumed that $r\le R^{\delta}$, thus by Lemma \ref{lem:11}, with $z=se_1$
and $s\le M\le \kappa(r, R)^{\frac{2-\delta}{3}}$, we obtain
\begin{align}
\label{eq:50}
\begin{split}
\frac{1}{|B_{R}^{q,(r)}\cap\ZZ^r|}\Big| & \sum_{x\in B_{R}^{q,(r)}\cap\ZZ^r}e^{2\pi i x\cdot\eta} - e^{2\pi i
(x+se_1)\cdot\eta}\Big|\\
&\le\frac{1}{|B_{R}^{q,(r)}\cap\ZZ^r|}\big|\big(B_R^{q,(r)}\cap\ZZ^r\big)\triangle\big((se_1+B_R^{q,(r)})\cap\ZZ^r\big)\big|\\
&\le  8e\big(srR^{-1}e^{srR^{-1}}+e^{srR^{-1}}R^{-1+(q+1)\delta/q}\big)\\
&\le 16e^2\kappa(r, R)^{-\frac{1}{3}+\frac{2\delta}{3}},
\end{split}
\end{align}
since  $srR^{-1}\le \kappa(r, R)^{\frac{2-\delta}{3}}R^{-1+\delta}\le\kappa(r, R)^{-\frac{1}{3}+\frac{2\delta}{3}}\le1$ and $R^{-1+(q+1)\delta/q} \leq R^{-1+3\delta/2}\le R^{-\frac{1}{3}+\frac{2\delta}{3}}$, and  for $R>16$ we also have
$$
|B_{R}^{q,(r)}\cap\ZZ^r| \geq |B_{R-r^{1/q}}^{q,(r)}| \geq |B_{R-r^{1/2}}^{q,(r)}| = |B_R^{q,(r)}|\bigg(1-\frac{r^{1/2}}{R}\bigg)^r \geq |B_R^{q,(r)}|\big(1-r^{3/2}R^{-1}\big)\ge|B_R^{q,(r)}|/2.
$$ 
Combining \eqref{eq:48} with
\eqref{eq:49} and \eqref{eq:50} we obtain
\[
|\mm^{B^q, (r)}_R(\eta)|\le (16e^2+2)\kappa(r, R)^{-\frac{1}{3}+\frac{2\delta}{3}}.
\]

Secondly, we assume  that
$\max\{\|\eta_1\|,\ldots,\|\eta_r\|\}\le\kappa(r, R)^{-\frac{1+\delta}{3}}$.  Observe that
by \eqref{eq:40} we have
\begin{align*}
\bigg| \frac{1}{|B_{R}^{q,(r)}\cap\ZZ^r|}-
\frac{1}{|B_{R}^{q,(r)}|}\bigg|\le\frac{eR^{-1+(q+1)\delta/q}}{|B_{R}^{q,(r)}\cap\ZZ^r|}
\le\frac{2e\kappa(r, R)^{-\frac{1}{3}+\frac{2\delta}{3}}}{|B_{R}^{q,(r)}|}.
\end{align*} 
Then $|\mm^{B^q, (r)}_R(\eta)|$ is bounded by
\begin{align}
\label{eq:42}
\begin{split} 
\Big|\mm^{B^q, (r)}_R(\eta)&-\frac{1}{|B_{R}^{q,(r)}|}\mathcal
F(\ind{B_R^{q,(r)}})(\eta)\Big|+\frac{1}{|B_{R}^{q,(r)}|}\big|\mathcal
F(\ind{B_R^{q,(r)}})(\eta)\big|\\
&\le 2e\kappa(r, R)^{-\frac{1}{3}+\frac{2\delta}{3}}+\frac{1}{|B_{R}^{q,(r)}\cap\ZZ^r|}\Big|\sum_{x\in
	B_{R}^{q,(r)}\cap\ZZ^r}e^{2\pi i
	x\cdot\eta}-\int_{B_R^{q,(r)}}e^{2\pi i
	y\cdot\eta}\dif y\Big| \\ & \quad + \frac{1}{|B_{R}^{q,(r)}|}\big|\mathcal
F(\ind{B_R^{q,(r)}})(\eta)\big|.
\end{split}
\end{align}
Let $Q^{(r)}=[-1/2, 1/2]^r$ and note that by Lemma \ref{lem:11} with
$z=t\in Q^{(r)}$ we obtain
\begin{align}
\label{eq:51}
\begin{split}
\frac{1}{|B_{R}^{q,(r)}\cap\ZZ^r|}\Big| & \sum_{x\in
	B_{R}^{q,(r)}\cap\ZZ^r}e^{2\pi i
	x\cdot\eta}-\int_{B_R^{q,(r)}}e^{2\pi i
	y\cdot\eta}\dif y\Big|\\
&=\frac{1}{|B_{R}^{q,(r)}\cap\ZZ^r|}\Big|\sum_{x\in
	\ZZ^r}\int_{Q^{(r)}}e^{2\pi i
	x\cdot\eta}\ind{B_R^{q,(r)}}(x)-e^{2\pi i
	(x+t)\cdot\eta}\ind{B_R^{q,(r)}}(x+t)\dif t\Big|\\
&\le\frac{1}{|B_{R}^{q,(r)}\cap\ZZ^r|}\int_{Q^{(r)}}\big|(B_R^{q,(r)}\cap\ZZ^r)\triangle\big((t+B_R^{q,(r)})\cap\ZZ^r\big)\big|\dif
t\\
&\quad +
\frac{1}{|B_{R}^{q,(r)}\cap\ZZ^r|}\sum_{x\in
	\ZZ^r}\ind{B_R^{q,(r)}}(x)\int_{Q^{(r)}}|e^{2\pi i
	x\cdot\eta}-e^{2\pi i
	(x+t)\cdot\eta}|\dif t\\
& \le16e^2\kappa(r, R)^{-\frac{1}{3}+\frac{2\delta}{3}}+2\pi\big(\|\eta_1\|+\ldots+\|\eta_r\|\big)\\
&\le16e^2\kappa(r, R)^{-\frac{1}{3}+\frac{2\delta}{3}}+2\pi r\kappa(r, R)^{-\frac{1+\delta}{3}}.
\end{split}
\end{align}
Finally, by Lemma \ref{lem:19} we obtain
\begin{align*}
\frac{1}{|B_R^{q,(r)}|}|\mathcal F(\ind{B_R^{q,(r)}})(\eta)|\le  C\big(c_q\kappa(r, R)\|\eta\|\big)^{-1}.
\end{align*}
Combining this with \eqref{eq:42} and \eqref{eq:51} we conclude
\[
|\mm^{B^q, (r)}_R(\eta)|\le(16e^2+2e)\kappa(r, R)^{-\frac{1}{3}+\frac{2\delta}{3}}+2\pi r\kappa(r, R)^{-\frac{1+\delta}{3}}
+C c_q^{-1}\big(\kappa(r, R)\|\eta\|\big)^{-1},
\]
which completes the proof.
\end{proof}

\begin{lemma}
	\label{lem:13}
	For every $\delta\in(0, 1/2)$ and $\varepsilon\in(0, 1/(50q)]$ there is a
	constant $C_{q,\delta, \varepsilon}>0$ such
	that for every $d, N\in\NN$, if $r$ is an integer such that $1\le r\le d$ and 
	$\max\{1, \varepsilon^{\frac{(q+1)\delta}{q}}\kappa(d, N)^{\delta}/2\}\le r\le \max\{1,\varepsilon^{\frac{(q+1)\delta}{q}}\kappa(d, N)^{\delta}\}$,
	then for every $\xi=(\xi_1,\ldots,\xi_d)\in\TT^d$ we have
	\begin{align*}
	|\mm^{B^q}_N(\xi)|\le C_{q,\delta, \varepsilon}\big(\kappa(d, N)^{-\frac{1}{3}+\frac{2\delta}{3}}+(\kappa(d, N)\|\eta\|)^{-1}\big),
	\end{align*}
	where $\eta=(\xi_1,\ldots, \xi_r)$.
\end{lemma}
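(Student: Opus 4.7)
The plan is to combine Lemma~\ref{lem:9} with Lemma~\ref{lem:20} applied to the lower-dimensional multiplier $\mm^{B^q,(r)}_R$. The scale $r\simeq \varepsilon^{(q+1)\delta/q}\kappa(d,N)^\delta$ is engineered precisely so that the polynomial-in-$r$ term from Lemma~\ref{lem:20} matches the main decay term $\kappa(r,R)^{-1/3+2\delta/3}$; the underlying arithmetic identity is $\delta-(1+\delta)/3=-1/3+2\delta/3$.

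First I would dispose of the degenerate cases. If $\kappa(d,N)<10$, or if the lower bound $\varepsilon^{(q+1)\delta/q}\kappa(d,N)^\delta/2\le 1$ forces $r=1$, then $\kappa(d,N)$ is bounded by a constant depending only on $q,\delta,\varepsilon$, and the desired inequality follows trivially from $|\mm_N^{B^q}(\xi)|\le 1$ after enlarging $C_{q,\delta,\varepsilon}$. Hence I may assume $\kappa(d,N)\ge 10$ together with $r\ge \varepsilon^{(q+1)\delta/q}\kappa(d,N)^\delta/2$. In this regime Lemma~\ref{lem:9} yields
\begin{equation*}
|\mm_N^{B^q}(\xi)| \le \sup_{l\ge \varepsilon^{q+1}\kappa(d,N)^q r}|\mm_{l^{1/q}}^{B^q,(r)}(\eta)| + 4e^{-\varepsilon r/10}.
\end{equation*}
For each such $l$, setting $R:=l^{1/q}$ gives $\kappa(r,R)=(l/r)^{1/q}\ge \varepsilon^{(q+1)/q}\kappa(d,N)$, and the upper hypothesis $r\le \varepsilon^{(q+1)\delta/q}\kappa(d,N)^\delta$ combined with $r\ge 1$ yields $r^{1-\delta/q}\le r\le \varepsilon^{(q+1)\delta/q}\kappa(d,N)^\delta$, which rearranges to $r\le R^\delta$. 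Thus Lemma~\ref{lem:20} is applicable uniformly in the admissible range of $l$.

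It remains to bound each of the three terms in Lemma~\ref{lem:20} by an admissible quantity. Since $-1/3+2\delta/3<0$ and $\kappa(r,R)\gtrsim_{q,\varepsilon}\kappa(d,N)$, the estimates
\begin{equation*}
\kappa(r,R)^{-1/3+2\delta/3}\lesssim_{q,\delta,\varepsilon}\kappa(d,N)^{-1/3+2\delta/3}\qquad\text{and}\qquad (\kappa(r,R)\|\eta\|)^{-1}\lesssim_{q,\varepsilon}(\kappa(d,N)\|\eta\|)^{-1}
\end{equation*}
are immediate. For the middle term, inserting $r\le \varepsilon^{(q+1)\delta/q}\kappa(d,N)^\delta$ together with the lower bound on $\kappa(r,R)$ gives $r\,\kappa(r,R)^{-(1+\delta)/3}\lesssim_{q,\delta,\varepsilon}\kappa(d,N)^{\delta-(1+\delta)/3}=\kappa(d,N)^{-1/3+2\delta/3}$, exactly as anticipated. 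Finally, since $r\gtrsim_{q,\delta,\varepsilon}\kappa(d,N)^\delta$, the residual $4e^{-\varepsilon r/10}$ decays exponentially in $\kappa(d,N)^\delta$ and is therefore dominated by $\kappa(d,N)^{-1/3+2\delta/3}$ up to a constant depending on $q,\delta,\varepsilon$. There is no substantive obstacle once the correct range of $r$ has been identified; the proof is essentially a bookkeeping exercise crystallising the matching identity noted at the outset.
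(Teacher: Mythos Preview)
Your proof is correct and follows essentially the same approach as the paper: reduce via Lemma~\ref{lem:9} to the lower-dimensional multiplier, verify the hypothesis $r\le R^{\delta}$ of Lemma~\ref{lem:20}, and then absorb each of its three terms (plus the residual $4e^{-\varepsilon r/10}$) into the two target quantities using $\kappa(r,R)\ge\varepsilon^{(q+1)/q}\kappa(d,N)$ and the choice $r\simeq_{q,\delta,\varepsilon}\kappa(d,N)^{\delta}$. Your write-up is in fact more explicit than the paper's, which compresses the final verification into a single sentence; the only cosmetic difference is that the paper takes the degenerate cutoff at $\kappa(d,N)\le\varepsilon^{-(q+1)/q}$ rather than phrasing it via the ``$r=1$ forced'' scenario, but these are equivalent.
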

\begin{proof}
	If $\kappa(d, N)\le \varepsilon^{-\frac{q+1}{q}}$, then there is nothing to do, since the implied constant in question is allowed to depend on $q$, $\delta$, and $\varepsilon$. We will assume that $\kappa(d, N)\ge \varepsilon^{-\frac{q+1}{q}}$, which ensures that $\kappa(d, N)\ge10$.
	In view of Lemma \ref{lem:9} we have 
	\begin{align*}
	|\mm^{B^q}_N(\xi)|\le\sup_{R\ge \varepsilon^{(q+1)/q}\kappa(d, N) r^{1/q}}|\mm_{R}^{B^q,(r)}(\eta)|+4e^{-\frac{\varepsilon r}{10}},
	\end{align*}
	where $\eta=(\xi_1,\ldots,\xi_r)$. By Lemma \ref{lem:20}, since $r\le \varepsilon^{\frac{(q+1)\delta}{q}}\kappa(d, N)^{\delta}\le \kappa(r, R)^{\delta}\le R^{\delta}$, we obtain
	\begin{align*}
	|\mathfrak
	m_{R}^{B^q,(r)}(\eta)|\lesssim_q \kappa(r, R)^{-\frac{1}{3}+\frac{2\delta}{3}}
	+r\kappa(r, R)^{-\frac{1+\delta}{3}}+\big(\kappa(r, R)\|\eta\|\big)^{-1}.
	\end{align*}
	Combining the two estimates above with our assumptions we obtain the desired claim. 
\end{proof}

We have prepared all necessary tools to
prove inequality \eqref{eq:23}. We shall be working under the
assumptions of Lemma \ref{lem:13} with $\delta=2/7$.

\begin{proof}[Proof of Proposition \ref{prop:2}]
Assume that $\varepsilon=1/(50q)$. If
$\kappa(d, N)\le2^{\frac{7}{2}}\cdot (50q)^{\frac{q+1}{q}}$ then
clearly \eqref{eq:23} holds. Therefore, we can assume that
$d, N\in\NN$ satisfy
$2^{\frac{7}{2}}\cdot(50q)^{\frac{q+1}{q}} \le \kappa(d, N)\le 50qd^{1-1/q}$. We
choose an integer $1\le r\le d$ satisfying
$(50q)^{-\frac{2(q+1)}{7q}}\kappa(d, N)^{\frac{2}{7}}/2\le r\le (50q)^{-\frac{2(q+1)}{7q}}\kappa(d, N)^{\frac{2}{7}}$,
this is possible since
$(50q)^{-\frac{2(q+1)}{7q}}\kappa(d, N)^{\frac{2}{7}} \geq 2$ and
$(50q)^{-\frac{2(q+1)}{7q}}\kappa(d, N)^{\frac{2}{7}} \le d^{\frac{2(1-1/q)}{7}}\le d$.
By symmetry we may also assume that $\|\xi_1\|\ge\ldots\ge\|\xi_d\|$
and we shall distinguish two cases. Suppose first that
\begin{align*}
\|\xi_1\|^2+\ldots+\|\xi_r\|^2\ge\frac{1}{4}\|\xi\|^2.
\end{align*}
Then in view of Lemma \ref{lem:13} (with $\delta=2/7$ and
$r\simeq_q \kappa(d, N)^{\frac{2}{7}}$) we obtain
\begin{align*}
|\mathfrak m^{B^q}_N(\xi)|\le C_q\big(\kappa(d, N)^{-\frac{1}{7}}+(\kappa(d, N)\|\xi\|)^{-1}\big),
\end{align*}
and we are done. So we can assume that
\begin{align}
\label{eq:54}
\|\xi_1\|^2+\ldots+\|\xi_r\|^2\le\frac{1}{4}\|\xi\|^2.  
\end{align}
Let $\varepsilon_1=1/10$ and assume first that
\begin{align}
\label{eq:55}
\|\xi_j\|\le\frac{\varepsilon_1^{1/q}}{10\kappa(d,N)}\quad\text{ for all } \quad r\le
j\le d.
\end{align}
We use the symmetries of $B^q_N\cap\ZZ^d$ to write
\begin{align*}
\mm^{B^q}_N(\xi)=\frac{1}{|B^q_N\cap\ZZ^d|}\sum_{x\in B^q_N\cap\ZZ^d}\prod_{j=1}^d \cos(2\pi x_j \xi_j).
\end{align*}
Applying the Cauchy--Schwarz inequality, $\cos^2(2\pi x_j \xi_j)=1-\sin^2(2\pi x_j \xi_j)$ and $1-x\le e^{-x}$,  we obtain
\begin{align}
\label{eq:56}
\begin{split}
|\mm^{B^q}_N(\xi)|^2\le \frac{1}{|B^q_N\cap\ZZ^d|}\sum_{x\in B^q_N\cap\ZZ^d}\exp\Big(-\sum_{j=r+1}^d\sin^2(2\pi x_j \xi_j)\Big).
	\end{split}
	\end{align}
	For $x\in B^q_N\cap\ZZ^d$ we define
	\begin{align*}
	I_x&=\{i\in\NN_d : \varepsilon\kappa(d, N)\le |x_i|\le 2\varepsilon_1^{-1/q}\kappa(d,N) \},\\
	I_x'&=\{i\in\NN_d : 2\varepsilon_1^{-1/q}\kappa(d,N)< |x_i| \},\\
	I_x''&=\{i\in\NN_d : \varepsilon\kappa(d,N)\le |x_i|\}=I_x\cup I_x'.
	\end{align*}
	and
	\[
	E=\big\{x\in B^q_N\cap\ZZ^d : |I_x|\ge\varepsilon_1 d/2\big\}.
	\]
	Observe that
	\begin{align*}
	E^{\bf c}=&\big\{x\in B^q_N\cap\ZZ^d : |I_x|<\varepsilon_1 d/2\big\}
	=\big\{x\in B^q_N\cap\ZZ^d : |I_x''|<\varepsilon_1 d/2+|I_x'|\big\}\\
	\subseteq&\big\{x\in B^q_N\cap\ZZ^d : |I_x''|<\varepsilon_1 d/2+|I_x'|\text{
		and } |I_x'|\le \varepsilon_1 d/2\big\}
	\cup
	\big\{x\in B^q_N\cap\ZZ^d : |I_x'|> \varepsilon_1 d/2\big\}.
	\end{align*}
	Then it is not difficult to see that
	\[
	E^{\bf c}\subseteq \big\{x\in B^q_N\cap\ZZ^d : |I_x''|<\varepsilon_1 d\big\},
	\]
	since $  \big\{x\in B^q_N\cap\ZZ^d : |I_x'|> \varepsilon_1 d/2\big\}=\emptyset$.
	Then by Lemma \ref{lem:5} with $\varepsilon_2=\varepsilon$,
	we obtain
	\begin{align*}
	|E^{\bf c}|\le |\big\{x\in B^q_N\cap\ZZ^d : |I_x''|<\varepsilon_1 d\big\}|\le 2e^{-\frac{d}{10}}|B^q_N\cap\ZZ^d|.
	\end{align*}
	Therefore, by \eqref{eq:56} we have
	\begin{align*}
	|\mm^{B^q}_N(\xi)|^2
	&\le \frac{1}{|B^q_N\cap\ZZ^d|}\sum_{x\in
		B^q_N\cap\ZZ^d}\exp\Big(-\sum_{j\in I_x \cap J_r}\sin^2(2\pi x_j
	\xi_j)\Big)\ind{E}(x)+2e^{-\frac{d}{10}},
	\end{align*}
	where $J_r=\{r+1,\ldots, d\}$. Using \eqref{eq:103} and definition of $I_x$ we have
	\[
	\sin^2(2\pi x_j \xi_j)\ge 16|x_j|^2\|\xi_j\|^2\ge 16\varepsilon^2\kappa(d,N)^2\|\xi_j\|^2,
	\]
	since $2|x_j|\|\xi_j\|\le 1/2$ by \eqref{eq:55}, and consequently we obtain 
	\begin{align}
	\label{eq:58}
	\begin{split}
	\frac{1}{|B^q_N\cap\ZZ^d|}&\sum_{x\in
		B^q_N\cap\ZZ^d}\exp\Big(-\sum_{j\in I_x\cap J_r}\sin^2(2\pi x_j
	\xi_j)\Big)\ind{E}(x)\\
	&\le
	\frac{1}{|B^q_N\cap\ZZ^d|}\sum_{x\in
		B^q_N\cap\ZZ^d\cap E}\exp\Big(-16\varepsilon^2\kappa(d,N)^2\sum_{j\in I_x\cap
		J_r}\|\xi_j\|^2\Big)\le Ce^{-c\kappa(d,N)^2\|\xi\|^2}
	\end{split}
	\end{align}
	for some constants $C, c>0$. In order to get the last inequality in \eqref{eq:58} observe that 
	\begin{align*}
	\frac{1}{|B^q_N\cap\ZZ^d|}&\sum_{x\in B^q_N\cap\ZZ^d\cap E}
	\exp\Big(-16\varepsilon^2\kappa(d,N)^2\sum_{j\in I_x\cap J_r}\|\xi_j\|^2\Big)\\
	&=\frac{1}{|B^q_N\cap\ZZ^d|}\sum_{x\in
		B^q_N\cap\ZZ^d\cap E}\frac{1}{d!}\sum_{\sigma\in{\rm Sym}(d)}\exp\Big(-16\varepsilon^2\kappa(d,N)^2\sum_{j\in \sigma(I_x)\cap
		J_r}\|\xi_j\|^2\Big)\\
	&=\frac{1}{|B^q_N\cap\ZZ^d|}\sum_{x\in
		B^q_N\cap\ZZ^d\cap E}\mathbb E\bigg[\exp\Big(-16\varepsilon^2\kappa(d,N)^2\sum_{j\in \sigma(I_x)\cap
		J_r}\|\xi_j\|^2\Big)\bigg],
	\end{align*}
	since $\sigma\cdot (B^q_N\cap\ZZ^d\cap E)=B^q_N\cap\ZZ^d\cap E$ for every $\sigma\in{\rm Sym}(d)$. 
	We now apply Lemma \ref{lem:7} with $\delta_1=\varepsilon_1/2$, $d_0=r$, $I=I_x$, $\delta_0=3/5$, and
	\begin{displaymath}
	u_j = \left\{ \begin{array}{rl}
	16\varepsilon^2\kappa(d,N)^2 \|\xi_r\|^2 & \textrm{for } 1 \leq j \leq r, \\
	16\varepsilon^2\kappa(d,N)^2 \|\xi_j\|^2 & \textrm{for } r+1 \leq j \leq d, \end{array} \right.
	\end{displaymath} 
	noting that $16\varepsilon^2\kappa(d,N)^2 \|\xi_r\|^2 \leq 1/5$ by \eqref{eq:55}. We conclude that
	\begin{align*}
	\mathbb E\bigg[\exp\Big(-16\varepsilon^2\kappa(d,N)^2\sum_{j\in \sigma(I_x)\cap
		J_r}\|\xi_j\|^2\Big)\bigg]\le 3 \exp\Big(-c'\kappa(d,N)^2\sum_{j=r+1}^d\|\xi_j\|^2\Big),
	\end{align*}
	holds for some $c'>0$ and for all $x\in B^q_N\cap\ZZ^d\cap E$.
	This proves \eqref{eq:58} since by
	\eqref{eq:54} we obtain
	\begin{align*}
	\exp\Big(-c'\kappa(d,N)^2\sum_{j=r+1}^d\|\xi_j\|^2\Big)\le \exp\Big(-\frac{c'\kappa(d,N)^2}{4}\sum_{j=1}^d\|\xi_j\|^2\Big).
	\end{align*}
	
	Assume now that \eqref{eq:55} does not hold. Then
	\begin{align*}
	\|\xi_j\|\ge\frac{\varepsilon_1^{1/2}}{10\kappa(d,N)}\quad\text{ for all
	} \quad 1\le j\le r.
	\end{align*}
	Hence
	\begin{align*}
	\|\xi_1\|^2+\ldots+\|\xi_r\|^2\ge\frac{\varepsilon_1r}{100\kappa(d,N)^2}.
	\end{align*}
	Therefore, we invoke Lemma \ref{lem:13} with $\eta=(\xi_1,\ldots, \xi_r)$ again and obtain 
	\begin{align*}
	|\mm^{B^q}_N(\xi)|&\lesssim_q \kappa(d, N)^{-\frac{1}{7}}+(\kappa(d, N)\|\eta\|)^{-1}\\
	&\lesssim_q \kappa(d, N)^{-\frac{1}{7}},
	\end{align*}
	since $r\simeq_q \kappa(d, N)^{\frac{2}{7}}$. This completes the proof of Proposition \ref{prop:2}.
\end{proof}

\end{document}